\setlist[enumerate]{nolistsep}
\newtheorem{theorem}{Theorem}[section]
\newtheorem{lemma}[theorem]{Lemma}
\newtheorem{claim}[theorem]{Claim}
\newtheorem{corollary}[theorem]{Corollary}
\newtheorem{fact}[theorem]{Fact}
\newtheorem{question}[theorem]{Question}
\newtheorem{problem}[theorem]{Problem}
\newtheorem{conjecture}[theorem]{Conjecture}
\theoremstyle{definition}
\newtheorem{definition}[theorem]{Definition}
\newcommand{\defi}[1]{%
  \marginpar{\textcolor{black}{\tiny#1}}%
            {\textcolor{red!80!black}{\emph{#1}}}}
\renewcommand{\defi}[1]{%
            {\textcolor{red!80!black}{\emph{#1}}}}
\newcommand{\aname}[2]{\newcommand{#1}{#2}}
\aname{\Alon}{Alon}             
\aname{\Bender}{Bender}         
\aname{\Bernoulli}{Bernoulli}   
\aname{\Cayley}{Cayley}         
\aname{\Chebyshev}{Chebyshev}   
\aname{\Chung}{Chung}           
\aname{\Haggkvist}{H\"aggkvist} 
\aname{\Hall}{Hall}             
\aname{\Havet}{Havet}           
\aname{\Janson}{Janson}         
\aname{\Kuehn}{K\"uhn}          
\aname{\McDiarmid}{Mc\,Diarmid} 
\aname{\Moon}{Moon}             
\aname{\Mycroft}{Mycroft}       
\aname{\Osthus}{Osthus}         
\aname{\Redei}{R\'edei}         
\aname{\Saks}{Saks}             
\aname{\Sahili}{El~Sahili}      
\aname{\Shapira}{Shapira}       
\aname{\Sos}{S\'os}             
\aname{\Sudakov}{Sudakov}       
\aname{\Sumner}{Sumner}         
\aname{\Szemeredi}{Szemer\'edi} 
\aname{\Treglown}{Treglown}     
\aname{\Thomason}{Thomason}     
\aname{\Thomasse}{Thomass\'e}   
\aname{\Vondrak}{Vondr\'ak}     
\aname{\Wormald}{Wormald}       
\newcommand{\rarr}{\rightarrow}
\newcommand{\larr}{\leftarrow}
\newcommand{\cals}{\ensuremath{\mathcal{S}}}
\newcommand{\ee}{{\ensuremath{\mathrm{e}}}}
\newcommand{\bbe}{{\ensuremath{\mathbb{E}}}}
\newcommand{\bbn}{\ensuremath{\mathbb{N}}}
\newcommand{\bbp}{{\ensuremath{\mathbb{P}}}}
\newcommand{\sm}{\setminus}
\newcommand{\ipp}{{\ensuremath{i+1}}}
\newcommand{\imm}{{\ensuremath{i-1}}}
\newcommand{\tauend}{{\ensuremath{\tau_{\mathrm{end}}}}}
\newcommand{\calb}{{\ensuremath{\mathcal{B}}}}
\newcommand{\calt}{{\ensuremath{\mathcal{T}}}}
\newcommand{\lln}{{\ensuremath{\log \log n}}}
\DeclareMathOperator{\dist}{dist}
\DeclareMathOperator{\Var}{Var}
\newcommand{\deq}{\coloneqq}
\newcommand{\eps}{\ensuremath{\varepsilon}}
\let\phi\varphi
\let\emptyset\varnothing
\newcommand{\littleo}{\ensuremath{\mathrm{o}}}
\newcommand{\vcher}[1]{\ensuremath{\widehat{#1}}}
\newcommand{\vstar}{{{\ensuremath{v}}}}
\newcommand{\ink}{{\ensuremath{i\in[k]}}}
\newcommand{\jnk}{{\ensuremath{j\in[k]}}}
\newcommand{\dplus}{{\ensuremath{d_\geq}}}
\newcommand{\CCT}{{cycle of~cluster tournaments}}
\newcommand{\CsCT}{{cycles of~cluster tournaments}}
\newcommand{\oldqed}{}
\def\endofClaim{\hfill\scalebox{.6}{$\Box$}}
\newenvironment{claimproof}[1][Proof]
{ \renewcommand{\oldqed}{\qedsymbol}
  \renewcommand{\qedsymbol}{\endofClaim}
  \begin{proof}[#1]}
{ \end{proof}
  \renewcommand{\qedsymbol}{\oldqed}}
\begin{document}

\title{Unavoidable trees in tournaments}
\author{Richard Mycroft\thanks{\texttt{r.mycroft@bham.ac.uk}. School of Mathematics, University of Birmingham, Birmingham, B15 2TT, United Kingdom. 
Research partially supported by EPSRC grant EP/M011771/1.}~~and~T\'assio Naia\thanks{\texttt{tnaia@member.fsf.org}. School of Mathematics, University of Birmingham, Birmingham, B15 2TT, United Kingdom. 
Research supported by CNPq (201114/2004-3).}}

\date{}
\maketitle

\begin{abstract}
An oriented tree~$T$ on~$n$ vertices
is unavoidable if every tournament on~$n$ vertices 
contains a~copy of~$T$. In this paper we give a sufficient condition 
for~$T$ to be unavoidable, and use this to prove that almost all 
labelled oriented trees are unavoidable, verifying a conjecture of 
\Bender\ and~\Wormald. 
We additionally prove that
every tournament on~$n + \littleo(n)$ vertices 
contains a copy of every oriented tree~$T$ on~$n$ vertices with polylogarithmic maximum 
degree, improving a result of~\Kuehn, 
\Mycroft\ and~\Osthus.
\end{abstract}

\section{Introduction}\label{s:intro}

An oriented graph~$H$ on~$n$ vertices is~\defi{unavoidable} if every 
tournament on~$n$ vertices contains a copy of~$H$; otherwise, we say 
that~$H$ is~\defi{avoidable}. In particular, if~$H$ contains a directed
cycle then~$H$ must be avoidable, since a transitive tournament 
contains no directed cycles and hence no copy of~$H$. It is
therefore natural to ask which oriented trees are unavoidable.
A~classical result of \Redei~\cite{redei1934}
states that every directed path is unavoidable. More recently,
\Thomason\ showed that all orientations of sufficiently long cycles
are unavoidable except for those which yield directed cycles~\cite{Thomason1986}.
In~particular this implies that all orientations of sufficiently long 
paths are unavoidable. \Havet\ and~\Thomasse~\cite{HavetThomasse2000b}
then gave a complete answer for paths:
with three exceptions, every orientation of a path is unavoidable 
(the exceptions are antidirected paths of length~3,~5 and~7, which
are not contained in the directed cycle of length~3, the regular 5-vertex 
tournament and the Paley tournament on 7~vertices respectively). 
Significant attention has also been focused on the unavoidability of claws 
(a claw is an oriented graph formed by identifying the initial vertices of a 
collection of vertex-disjoint directed paths). Indeed~Saks and~S\'os~\cite{SaksSos81:unavoidable} 
conjectured that every claw on~$n$ vertices with maximum degree at most~$n/2$ is unavoidable. 
Lu~\cite{Lu91:claws,Lu93:claws} gave a counterexample to this conjecture, but in the other direction 
showed that every claw with maximum degree at most~$3n/8$ is unavoidable.
Lu, Wang and Wong~\cite{lu98:claws} then extended these results by showing that every claw with 
maximum degree at most~$19n/50$ is unavoidable, but that there exist claws with maximum degree 
approaching~$11n/23$ which are avoidable. Finding the supremum of all~$c > 0$ for which every 
claw with maximum degree at most~$cn$ is unavoidable remains an open problem.

Some oriented trees are far from being unavoidable. For example, the outdirected star~$S$ 
on~$n$ vertices (whose edges are oriented from the central vertex to 
each of the~$n-1$ leaves) is not contained in a regular tournament
on~$2n-3$ vertices, since each vertex of the latter has only~$n-2$ outneighbours. 
That is, there exist tournaments with almost twice as many vertices as~$S$ which do not
contain a copy of~$S$. On the other 
hand,~\Bender\ and~\Wormald~\cite{bender1988random} proved that 
almost all oriented trees are `almost unavoidable', in the sense that they 
are contained in almost all tournaments on the same number of vertices.

\begin{theorem}[\Bender\ and \Wormald, {\cite[Theorem~4.4]{bender1988random}}]
  Let~$T$ be chosen uniformly at random from the set of all labelled oriented
  trees on~$n$ vertices, and let~$G$ be chosen uniformly at random from the set 
  of all labelled tournaments on~$n$ vertices. 
  Then asymptotically almost surely~$G$~contains a~copy of~$T$.
\end{theorem}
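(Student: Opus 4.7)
The plan is to reveal the tree $T$ first, condition on its having typical structural properties, and then show that a random tournament $G$ almost surely embeds any such $T$.

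I would first establish typical structure of $T$: the Prüfer-sequence encoding makes the degree of each fixed vertex distributed as $1+\mathrm{Bin}(n-2,1/n)$, which converges to $1+\mathrm{Poisson}(1)$, so by Chernoff plus a union bound with high probability $\Delta(T)=O(\log n/\log\log n)$ and $T$ has at least $(1/\ee-\littleo(1))n$ leaves. I would then establish typical structure of $G$: a Chernoff-plus-union-bound argument over $n^{O(\Delta)}$ events shows that with high probability, for every $S\subseteq V(G)$ with $|S|\leq\Delta$ and every $\sigma\colon S\to\{+,-\}$,
\begin{equation*}
\Big|\bigcap_{s\in S}N_G^{\sigma(s)}(s)\Big| = 2^{-|S|}n + O(\sqrt{n\log n}).
\end{equation*}

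Given typical $T$ and $G$, the plan is to construct a copy of $T$ in $G$ in two stages. Root $T$ at a non-leaf vertex and let $A=V(T)\setminus L(T)$ denote the set of internal vertices, so $|A|\leq(1-1/\ee)n+\littleo(n)$. Stage~1 embeds $A$ into $V(G)$ by a randomised BFS: when extending the partial embedding $\phi$ to a new vertex $v\in A$ with parent $u$ already embedded at $\phi(u)$, one selects $\phi(v)$ uniformly at random from $N_G^\sigma(\phi(u))$ minus the set of already-embedded vertices, where $\sigma\in\{+,-\}$ encodes the orientation of $uv$ in $T$. Stage~2 extends $\phi$ to the leaves $L(T)$ by finding a perfect matching in the bipartite graph on $L(T)$ and $V(G)\setminus\phi(A)$ in which $\ell\in L(T)$ is joined to $w$ iff $\phi(\mathrm{parent}(\ell))w$ carries the correct orientation in $G$; Hall's condition for this matching will follow from the quasi-randomness of $G$ together with the spread-out distribution of $\phi(A)$.

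The crux will be Stage~1. Because $|A|$ may exceed $n/2$, the one-sided neighbourhood $N_G^\sigma(\phi(u))$ of size $\approx n/2$ could in principle be exhausted by previously embedded vertices. The key invariant, to be established by an Azuma-style martingale concentration argument over the random choices made in Stage~1, is that at every step the set of already-embedded vertices intersects each one-sided neighbourhood $N_G^\sigma(v)$ in close to half its cardinality; under this invariant each allowed set has size $\Omega(n)$ and the greedy step always succeeds. Given this, verifying Hall's condition for Stage~2 is a direct consequence of the quasi-randomness of $G$ established above.
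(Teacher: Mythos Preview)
This theorem is quoted in the paper as a prior result of Bender and Wormald; the paper does not give a direct proof of it. Instead, the paper establishes the strictly stronger Theorem~\ref{t:most-unavoidable}: a uniformly random labelled oriented tree $T$ on $n$ vertices is asymptotically almost surely contained in \emph{every} tournament on $n$ vertices, not merely in a random one. The Bender--Wormald statement follows immediately.

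Your approach is therefore genuinely different from the paper's route. You make essential use of the randomness of $G$: the quasi-randomness estimate on intersections of small families of in- and out-neighbourhoods is the engine of both your greedy Stage~1 embedding and your Stage~2 matching. The paper, by contrast, never exploits the randomness of $G$. It shows that a random $T$ is almost surely $\alpha$-nice (Theorem~\ref{t:most-are-nice}) and has maximum degree $O(\log n/\log\log n)$ (Theorem~\ref{t:aas-max-degree}), and then invokes the deterministic embedding result Theorem~\ref{t:good-unavoidable}, which places any such $T$ into an \emph{arbitrary} tournament via a structural dichotomy (almost-directed pair versus cycle of cluster tournaments), a regularity-based random allocation and embedding algorithm, and a careful leaf-completion step. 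What your approach buys is directness for the statement as given, and is close in spirit to the original Bender--Wormald argument; what the paper's approach buys is the much stronger unavoidability conclusion.

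That said, your Stage~1 concentration claim is the crux and is under-specified. You assert that an ``Azuma-style'' argument keeps the embedded set spread evenly across every one-sided neighbourhood $N_G^\sigma(v)$, but the conditional increment probability at step $t$ is a ratio
\[
\frac{\bigl|N_G^{\sigma'}(\phi(u))\cap N_G^{\sigma}(v)\setminus\text{embedded}\bigr|}{\bigl|N_G^{\sigma'}(\phi(u))\setminus\text{embedded}\bigr|},
\]
both of whose terms depend on the very invariant you are trying to establish, so a single martingale bound does not immediately apply. One needs an inductive bootstrap (assume the invariant up to time $t$, use it to control the step-$t$ increment, then apply concentration over $\Theta(n)$ steps with a union bound over all $(v,\sigma)$), and the accumulated error terms must be tracked. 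This is not fatal, and arguments of this type can be made rigorous, but as written it is a plan rather than a proof.
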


In the same paper~\Bender\ and~\Wormald\ conjectured that this is 
true for every tournament~$G$, or, in other words, that almost all labelled
oriented trees are unavoidable. The main result of this paper is to prove
this conjecture.

\begin{theorem}\label{t:most-unavoidable}
  Let~$T$ be chosen uniformly at random from the set
  of all labelled oriented trees on~$n$ vertices.
  Then asymptotically almost surely~$T$ is unavoidable.
\end{theorem}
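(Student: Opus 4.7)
The plan is to identify a structural property~$\mathcal{P}$ of labelled oriented trees with two features: (a)~every tree satisfying~$\mathcal{P}$ embeds into every tournament of the same order, and (b)~a uniformly random labelled oriented tree on~$n$ vertices satisfies~$\mathcal{P}$ a.a.s. The natural ingredients of~$\mathcal{P}$ are polylogarithmic maximum degree (so that the quantitative strengthening of~\Kuehn, \Mycroft\ and~\Osthus\ advertised in the abstract is available), the existence of~$\Omega(n)$ internally vertex-disjoint \emph{bare paths} (paths whose internal vertices have degree~$2$ in the tree) of super-constant length, and a balance condition asserting that at each vertex the in- and out-degrees are roughly equal.

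The verification that a random tree satisfies~$\mathcal{P}$ a.a.s.\ splits into two independent pieces. For the underlying labelled tree, sampled uniformly from the~$n^{n-2}$ possibilities by~\Cayley's formula, it is classical that the maximum degree is~$\Theta(\log n/\log\log n)$ a.a.s., and that there are a linear number of leaves; a straightforward first- and second-moment calculation shows that the tree also contains~$\Omega(n)$ internally vertex-disjoint bare paths of length~$\omega(1)$. Conditional on the underlying tree, each edge receives an independent uniform orientation, so Hoeffding/Chernoff-type bounds deliver the balance condition at every vertex of tree-degree~$\omega(\log n)$, while a positive fraction of the bare paths are consistently oriented, giving a collection of directed bare paths that are unavoidable by~\Redei's theorem.

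To show that~$\mathcal{P}$ implies unavoidability I would follow an absorption strategy. Given~$T$ satisfying~$\mathcal{P}$ and a tournament~$G$ on~$|V(T)|$ vertices, decompose~$T$ into a \emph{skeleton}~$T'$ obtained by deleting the interior of a small fraction of the directed bare paths, together with a reserve collection~$\mathcal{B}$ of such paths; apply the $(1+\littleo(1))n$-vertex embedding of~\Kuehn, \Mycroft\ and~\Osthus\ to place~$T'$ inside~$G$, leaving a residual set~$L\subseteq V(G)$ of size~$\littleo(n)$; and finally splice the vertices of~$L$ into the paths of~$\mathcal{B}$ in batches. Each directed path~$P\in\mathcal{B}$ with prescribed endpoints~$u,v$ in the current partial embedding can be rerouted through any chosen set of~$|V(P)|-2$ unused host vertices provided these form (together with~$u,v$) a directed Hamilton path, which by~\Redei's theorem will exist provided adequate in/out-connectivity is preserved---a property guaranteed by the balance condition together with the fact that the residual tournament remains dense.

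The main obstacle is the passage from approximate to exact embedding. The approximate step can waste~$\littleo(n)$ host vertices; the absorbing structure must be arranged so that it is simultaneously small enough not to disrupt the embedding of the bulk of~$T$, and collectively flexible enough to accommodate any possible leftover set of~$\littleo(n)$ vertices in~$G$. Balancing these competing constraints is the technical crux, and here the random orientation of~$T$ is essential: it supplies a rich population of bare paths with varied orientation patterns together with balanced degree profiles, and this variety is exactly what makes it possible to absorb an arbitrary leftover set of host vertices.
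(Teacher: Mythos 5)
Your reduction of the theorem to ``random trees a.a.s.\ satisfy a structural property $\mathcal{P}$, and $\mathcal{P}$ implies unavoidable'' matches the paper's strategy in outline, but the property you choose and, crucially, the absorption step you build on it do not work. The crux is the splicing step. \Redei's theorem guarantees a directed Hamilton path in any tournament, but with \emph{no control over its endpoints}: rerouting a deleted bare path of $T$ between the already-embedded images of its two endpoints requires a directed path from a \emph{prescribed} vertex to a \emph{prescribed} vertex through a prescribed set of unused host vertices, and an arbitrary tournament need not contain such a path (this requires strong-connectivity-type hypotheses you have no way to enforce on the leftover set). Worse, an internal vertex of a directed bare path must receive, within the splicing set, both an inneighbour and an outneighbour; but the host tournament is adversarial, and the leftover set can contain vertices essentially all of whose outneighbours (or inneighbours) were consumed by the skeleton embedding --- think of a near-transitive tournament whose ``sink-like'' vertices survive to the end. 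Your balance condition is a property of the tree $T$, not of the host $G$, so it cannot rescue this. This either/or obstruction is exactly what the paper's notion of an $\alpha$-nice tree is designed to beat: a pendant star containing \emph{both} an in-leaf and an out-leaf lets one cover an arbitrary bad vertex $b$ by using whichever of the two leaf edges agrees with the orientation of the edge between $b$ and the image of the star's centre, and the remaining leaves are placed via perfect matchings in super-regular bipartite graphs, after a structural dichotomy (almost-directed pair versus an almost-spanning cycle of cluster tournaments) for the host. A directed bare path has no such one-edge flexibility, so your absorbing structure cannot handle an arbitrary leftover set.

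Two further quantitative slips reinforce that the plan as stated cannot be executed. First, a tree on $n$ vertices cannot contain $\Omega(n)$ internally vertex-disjoint bare paths of length $\omega(1)$ (that would need $\omega(n)$ vertices); at best one gets roughly $n/\mathrm{polylog}(n)$ disjoint bare paths of slowly growing length, or $\Theta(n)$ of bounded length. Second, the polylog-degree embedding result of \Kuehn, \Mycroft\ and \Osthus\ (Theorem~\ref{t:log-bounded-deg}) needs the host to have $(1+\alpha)|T'|$ vertices for a fixed $\alpha>0$, so the skeleton must omit a constant fraction of $T$ and the residual set is $\Theta(n)$, not $\littleo(n)$; your absorber must therefore perform an \emph{exact} cover of linearly many adversarially placed host vertices, which is precisely the matching/super-regularity step the paper carries out and which ``\Redei\ plus density'' does not supply.
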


The following definitions are crucial for the proof of 
Theorem~\ref{t:most-unavoidable}. We say that a subtree~$T'$ of
a tree~$T$ is~\defi{pendant} if~$T-T'$ is connected. Next, we define 
`nice' oriented trees, whose properties are useful for embedding in 
tournaments, as follows (see Figure~\ref{fi:nice}).

\begin{figure}[tb] 
  \begin{center}
    \includegraphics{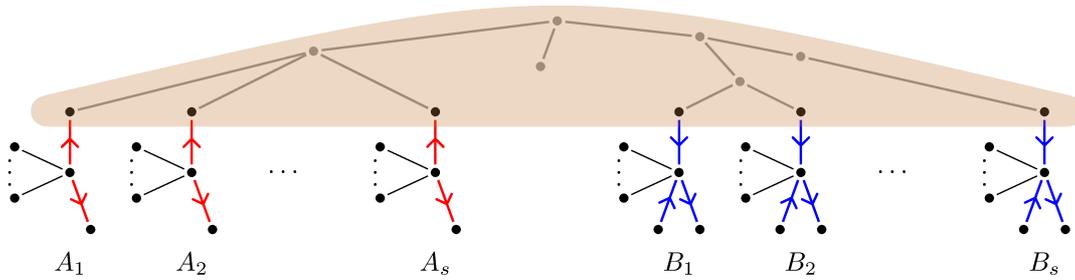}

    \caption{%
    An $\alpha$-nice tree~$T$ has $s = \lceil \alpha n \rceil$ pendant stars~$A_1, \dots A_s$ which contain an out-leaf of $T$ such that the edge between $T-A_i$ and $A_i$ is directed away from $A_i$, and also $s$ pendant stars~$B_1,\ldots,B_s$ which contain both an in-leaf of $T$ and an out-leaf of $T$ such that the edge between $T-B_i$ and $B_i$ is directed towards $B_i$. In this illustration we only indicate the orientations of edges specified by this definition. The shaded area is the subtree~$T -\bigcup_{i\in[s]} \bigl(V(A_i)\cup V(B_i)\bigr)$.}
    \label{fi:nice}
  \end{center}
\end{figure} 

\begin{definition}\label{d:nice-trees}
For~$\alpha > 0$ we say that an~oriented tree~$T$ on~$n$ vertices is~\defi{$\alpha$-nice}
if, writing~$s \deq \lceil\alpha n \rceil$, 
$T$~contains~$2s$ vertex-disjoint pendant oriented stars~$A_1,\ldots,A_s$ 
and~$B_1,\ldots,B_s$ 
such that  for each~\ink
\begin{enumerate}[label=(\roman*)]
\item $A_i$ is a subtree of~$T$ which contains an out-leaf of~$T$
  and the edge between~$A_i$ and~$T -A_i$ is oriented away from~$A_i$, and 
\item $B_i$ is a subtree of~$T$ which contains both an in-leaf of~$T$ and 
  an out-leaf of~$T$
  and the edge between~$B_i$ and~$T -B_i$ is oriented towards~$B_i$.
\end{enumerate}
\end{definition}

Most of the work involved in proving Theorem~\ref{t:most-unavoidable}
is in the proof of the following theorem,
which states that large nice oriented trees with polylogarithmic
maximum degree are unavoidable.

\begin{theorem}\label{t:good-unavoidable}
  For every~$\alpha,C > 0$ 
  there exists~$n_0$
  such that if~$T$ is an oriented tree on~$n \geq n_0$ vertices such that
\begin{enumerate}[label=(\roman*)]
\item $\Delta(T) \leq (\log n)^C$ and\label{i:log-deg}
\item $T$ is $\alpha$-nice, \label{i:nice}
\end{enumerate}
then~$T$ is unavoidable.
\end{theorem}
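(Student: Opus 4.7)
The plan is to combine a regularity-based embedding of the bulk of $T$ with an absorption argument driven by the pendant stars of the $\alpha$-nice condition. Roughly, I will show that every tournament $G$ on $n$ vertices contains a nearly spanning \CCT---clusters arranged cyclically with well-oriented, dense regular pairs between consecutive clusters---into which the bulk of $T$ can be embedded vertex-by-vertex at random, and then finish by mapping the leaves of the pendant stars to whatever vertices of $G$ remain unused.

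First, I~apply a Szemer\'edi-type regularity lemma to $G$ to obtain equal-sized clusters $V_1,\dots,V_k$ with most pairs $\eps$-regular in the oriented sense. The resulting reduced tournament has minimum semidegree close to $k/2$, so Camion-/Moon-type arguments yield a nearly spanning \CCT\ with prescribed orientations along the cycle. Within each cluster of this \CCT, I~reserve a small random set of ``absorber slots'' to be used in the final phase; the remaining vertices of $G$ serve as the host structure for the core of~$T$.

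Next, I~decompose $T$ into the core $T_0 \deq T-\bigcup_{i\in[s]}\bigl(V(A_i)\cup V(B_i)\bigr)$ together with the $2s$ pendant stars, and embed $T_0$ (including the star-root of each $A_i$ and $B_i$) into the \CCT\ by a BFS-style traversal: each newly embedded vertex is placed uniformly at random in the appropriate in- or out-neighbourhood of its parent's image within the next cluster around the cycle. Here the hypothesis $\Delta(T)\leq(\log n)^C$ is essential: it lets \Janson/\McDiarmid-type concentration inequalities guarantee that candidate sets remain polynomially large throughout the procedure, so every vertex of~$T_0$ finds a valid image even as the host structure is gradually used up.

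In the absorption phase, the leftover set $U\subseteq V(G)$ of unused vertices has size $O(\alpha n)$ and lies mostly in the reserved absorber slots. I~extend the embedding by mapping the star-leaves of the $A_i$ and $B_i$ onto~$U$: the orientation of the edge from $T-A_i$ to $A_i$ (outgoing) forces each $A_i$ to absorb out-neighbours of its embedded star-root, while each $B_i$, possessing both in- and out-leaves, can absorb vertices of either type. The main obstacle will be this last step: one must show that a suitable matching between $U$ and free leaf-slots across the stars always exists, which I~plan to verify via Hall's condition using the concentration estimates from the core embedding. The asymmetric definition of $\alpha$-nice---one family contributing only out-leaves, the other contributing leaves of both types---is precisely what makes absorption of either orientation feasible, and coordinating the random embedding of the star-roots with the eventual structure of~$U$ is where the argument is most delicate.
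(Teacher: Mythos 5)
Your opening step is where the argument breaks: it is not true that every tournament on~$n$ vertices contains a nearly spanning \CCT, and the reduced tournament obtained from a regularity partition need not have minimum semidegree anywhere near~$k/2$. A transitive tournament (or, more generally, any tournament admitting an almost-directed pair partition $\{U,W\}$) is a counterexample: every pair of large vertex sets there looks like a half-graph, so no $(d,\eps)$-regular pair of density bounded away from~$0$ and~$1$ exists in both directions around a cycle, and hence no almost-spanning regular \CCT\ can be found. This is exactly why the paper's structural input (Lemma~\ref{l:struct}) is a trichotomy -- small semidegree, an almost-directed pair, or a \CCT\ -- and why the proof of Theorem~\ref{t:good-unavoidable} first performs a careful extraction of sets $X,Y,Z$ (minimising $|Y|$) to dispose of low-semidegree vertices and almost-directed pairs before any \CCT\ is invoked. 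Your proposal covers only the \CCT\ branch; the almost-directed-pair branch requires a genuinely different embedding (Lemma~\ref{l:embed-nice-in-cut}, built on Lemmas~\ref{l:nice-split} and~\ref{l:approx-embed-in-cut}), and it is precisely there that the $A_i$-type pendant stars of the $\alpha$-nice definition are needed. Without this case your argument cannot handle, say, tournaments close to transitive, so the proof as proposed fails for a large class of hosts.

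Even restricted to the \CCT\ branch, the final Hall-type absorption is too optimistic as stated. The leftover set contains the \emph{atypical} vertices -- those with too few inneighbours in the preceding cluster or too few outneighbours in the following one, or lying outside all clusters -- and for these Hall's condition in your leaf-to-leftover auxiliary graph can simply fail. The paper instead covers such vertices separately: for a bad vertex $b$ in a cluster $V_i$ and an embedded vertex $x$ of~$T$ adjacent to both a removed in-leaf and a removed out-leaf, the tournament structure \emph{inside} $V_i$ guarantees an edge between $b$ and the image of~$x$ in one direction or the other, so one of the two leaves can absorb~$b$. Moreover the spanning finish needs exact per-cluster counts ($2|P_i|=|U_i|$ for every~$i$), which forces the two-stage decomposition into subtrees $T_1,T_2$ (Lemmas~\ref{l:1st-half} and~\ref{l:2nd-half}), a balancing algorithm, and perfect matchings in super-regular pairs, rather than a single Hall argument over a leftover set of size $O(\alpha n)$. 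These refinements could plausibly be reconstructed along your lines, but the missing almost-directed-pair case is a structural gap, not a technicality.
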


Almost all labelled trees satisfy condition~\ref{i:log-deg} of Theorem~\ref{t:good-unavoidable}, 
as proved by \Moon.

\begin{theorem}[{\cite[Corollaries~1 and~2]{moon70:trees}}]\label{t:aas-max-degree}
  For every~$\eps > 0$, if~$T$ is chosen uniformly at random from the set of all labelled 
  trees on~$n$ vertices, then
  asymptotically almost surely
  \[
  (1-\eps)\frac{\log n}{\log\log n}
  \leq\Delta(T)
  \leq (1+\eps)\frac{\log n}{\log\log n}.
  \]
\end{theorem}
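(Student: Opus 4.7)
My plan is to reduce Moon's theorem to a Poisson-style tail estimate for binomial random variables via the Prüfer bijection. Recall that this bijection identifies labelled trees on~$[n]$ with sequences~$(p_1,\ldots,p_{n-2}) \in [n]^{n-2}$ in such a way that the degree of~$v$ in~$T$ equals~$1 + |\{i : p_i = v\}|$. A uniformly random labelled tree therefore corresponds to a uniformly random sequence in~$[n]^{n-2}$; in this model~$d_v$ is distributed as~$1 + X_v$ with~$X_v \sim \mathrm{Bin}(n-2,1/n)$, and for distinct~$u,v$ the indicator vectors~$(\mathbf{1}[p_i=u])_i$ and~$(\mathbf{1}[p_i=v])_i$ are negatively associated because for each~$i$ the events~$\{p_i=u\}$ and~$\{p_i=v\}$ are mutually exclusive.

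For the upper bound I would fix~$\eps>0$, set~$k \deq \lceil(1+\eps)\log n / \lln\rceil$, and apply the standard estimate
\[
\bbp[X_v \geq k] \leq \binom{n-2}{k}n^{-k} \leq \tfrac{1}{k!}.
\]
Stirling's formula gives~$\log(1/k!) = -k\log k + O(k) = -(1+\eps)\log n\,(1+\littleo(1))$, so~$\bbp[X_v \geq k] \leq n^{-1-\eps/2}$ for~$n$ sufficiently large. A union bound over all~$n$ vertices then yields~$\bbp[\max_v X_v \geq k] = \littleo(1)$, and hence a.a.s.\ $\Delta(T) < 1 + k \leq (1+2\eps)\log n/\lln$.

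For the lower bound I would apply the second moment method. Set~$k \deq \lfloor(1-\eps)\log n / \lln\rfloor$ and~$Y \deq |\{v \in [n] : X_v \geq k\}|$; the goal is~$\bbp[Y=0] = \littleo(1)$. A matching lower Stirling estimate, using~$\bbp[X_v \geq k] \geq \bbp[X_v = k] = \binom{n-2}{k}n^{-k}(1-1/n)^{n-2-k}$, gives~$\bbp[X_v \geq k] \geq n^{-1+\eps/2}$ for large~$n$, so~$\bbe[Y] \geq n^{\eps/2} \to \infty$. The negative association noted above yields~$\bbp[X_u \geq k, X_v \geq k] \leq \bbp[X_u \geq k]\,\bbp[X_v \geq k]$ for all distinct~$u,v$, hence~$\Var(Y) \leq \bbe[Y]$ and Chebyshev's inequality gives~$\bbp[Y=0] \leq \Var(Y)/\bbe[Y]^2 \leq 1/\bbe[Y] = \littleo(1)$.

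The only step beyond a one-variable binomial tail bound is the negative correlation of the threshold events~$\{X_u \geq k\}$ and~$\{X_v \geq k\}$ in the joint multinomial distribution on the Prüfer sequence. This follows from the standard facts that indicators of distinct outcomes of a single multinomial trial are pairwise negatively correlated, and that negative association is preserved under sums of coordinates and under coordinatewise monotone functions; no deeper combinatorial input is required.
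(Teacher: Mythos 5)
The paper does not prove this statement; it cites Corollaries~1 and~2 of Moon's 1970 monograph on labelled trees. Your reconstruction via the Pr\"ufer code is the classical argument (essentially Moon's own), and it is correct: the reduction of~$\Delta(T)$ to~$1+\max_v X_v$ with~$X_v\sim\mathrm{Bin}(n-2,1/n)$ is exact, the Stirling estimates on both sides give~$\bbp[X_v\geq k]=n^{-1\pm\littleo(1)}$ at~$k\sim\log n/\lln$, the union bound settles the upper bound, and the second-moment step is justified since a multinomial vector is negatively associated (the closure properties you invoke — NA of 0--1 indicators of mutually exclusive events, NA under independent unions, and NA under increasing functions of disjoint blocks — are all standard), which gives~$\Var(Y)\leq\bbe[Y]$ and hence~$\bbp[Y=0]\leq 1/\bbe[Y]\to 0$. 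A minor stylistic point: for just the two-point inequality~$\bbp[X_u\geq k,\,X_v\geq k]\leq\bbp[X_u\geq k]\,\bbp[X_v\geq k]$ you could bypass negative association entirely by observing that~$X_v\mid X_u=a$ is~$\mathrm{Bin}\bigl(n-2-a,\tfrac{1}{n-1}\bigr)$, which is stochastically decreasing in~$a$; this is slightly more elementary, but your route is fine.
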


Since a uniformly-random orientation of a uniformly-random labelled tree yields a uniformly-random labelled oriented tree, Theorem~\ref{t:aas-max-degree} remains valid if we replace `labelled tree' by `labelled oriented tree'.
We prove that almost all labelled oriented trees satisfy condition~\ref{i:nice} 
of~Theorem~\ref{t:good-unavoidable}.

\begin{theorem}\label{t:most-are-nice}
  Let~$T$ be chosen uniformly at random from the set
  of all labelled oriented trees on~$n$ vertices.
  Then asymptotically almost surely~$T$ 
  is~$\frac{1}{250}$-nice.
\end{theorem}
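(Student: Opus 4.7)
The plan is to decouple the randomness: a uniformly random labelled oriented tree on~$n$ vertices may be obtained by first sampling a uniformly random labelled unoriented tree~$T_0$ and then orienting each edge of~$T_0$ independently and uniformly. I will find in~$T_0$ many vertex-disjoint pendant substructures to serve as candidate $A_i$'s (single leaves) and $B_i$'s (pendant cherries of~$T_0$), then show that the random orientation turns enough of them into valid $A_i$'s and $B_i$'s.

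Call a vertex~$c$ a \emph{cherry centre} of~$T_0$ if $c$ has degree exactly~$3$ and exactly two of its three neighbours are leaves of~$T_0$; writing these leaves as $\ell_1,\ell_2$ and the remaining neighbour as~$v$, the set $\{c,\ell_1,\ell_2\}$ is a pendant $K_{1,2}$ in~$T_0$ with pendant edge~$cv$. Distinct cherry centres automatically produce vertex-disjoint pendant cherries: a common leaf would force the two centres to coincide with the leaf's unique neighbour, and a vertex cannot simultaneously have degree~$3$ (as a centre) and degree~$1$ (as a leaf). By Cayley's formula, for any prescribed configuration $(c;\ell_1,\ell_2;v)$ there are exactly $(n-3)^{n-5}$ labelled trees on~$[n]$ realising it as a pendant cherry, so by linearity
\[
\bbe\bigl[\#\text{ pendant cherries of }T_0\bigr] = \frac{n(n-1)(n-2)(n-3)}{2}\cdot\frac{(n-3)^{n-5}}{n^{n-2}} = (1+\littleo(1))\frac{n}{2e^3}.
\]
A second-moment computation---splitting pairs of cherries by their overlap type and bounding each summand with an analogous Cayley count---shows the variance is~$O(n)$, so Chebyshev's inequality gives that a.a.s.\ $T_0$ contains at least $N_c \deq (1-\littleo(1))\,n/(2e^3)$ pendant cherries. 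An analogous (and classical) calculation shows that a.a.s.\ $T_0$ has $(1+\littleo(1))\,n/e$ leaves, so at least $N_\ell \deq (1-\littleo(1))(n/e - n/e^3) > n/4$ of them lie outside every selected pendant cherry; call these the \emph{free leaves}.

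Now condition on~$T_0$ and the chosen cherries and free leaves, and orient each edge of~$T_0$ independently and uniformly. A pendant cherry $\{c,\ell_1,\ell_2\}$ with pendant edge~$cv$ becomes a valid~$B_i$ precisely when $cv$ is oriented from~$v$ to~$c$ (probability~$\tfrac12$) and exactly one of $c\ell_1,c\ell_2$ is oriented from~$c$ to the leaf (probability~$\tfrac12$), giving total probability~$\tfrac14$. A free leaf~$\ell$ becomes a valid single-vertex~$A_i$ precisely when its incident edge is oriented away from~$\ell$, with probability~$\tfrac12$. Because the edges used by distinct candidates are pairwise disjoint, these indicator variables are mutually independent; Chernoff's inequality then yields that a.a.s.\ at least $(1-\littleo(1))N_c/4 > n/250$ cherries become valid~$B_i$'s and at least $(1-\littleo(1))N_\ell/2 > n/250$ free leaves become valid~$A_i$'s. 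Combined with the previous step this shows that $T$ is $\frac{1}{250}$-nice a.a.s.

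The main technical obstacle is the variance bound for the pendant-cherry count: bounding the joint probability that two specified cherry configurations both appear in~$T_0$ requires a short case analysis on how their vertex sets intersect (disjoint; sharing the pendant neighbour; one configuration's cherry meeting the other's pendant neighbour; etc.), and each case reduces to a routine application of Cayley's formula, though the bookkeeping is somewhat tedious.
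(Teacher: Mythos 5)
Your overall strategy (two‑stage randomness: uniform labelled tree $T_0$, then independent fair edge orientations; pendant cherries via a Cayley‑count first/second moment; Chernoff on the orientation step) is essentially the paper's. The error lies in the construction of the $A_i$'s.

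You claim a single free leaf $\ell$ of $T_0$ whose incident edge gets oriented \emph{away} from $\ell$ becomes a valid single‑vertex $A_i$. But Definition~\ref{d:nice-trees}(i) requires two things of $A_i$ simultaneously: (a) $A_i$ contains an out‑leaf of $T$, and (b) the edge between $A_i$ and $T-A_i$ is oriented away from $A_i$. Here an out‑leaf is defined analogously to an out‑subtree, namely a leaf whose unique incident edge is directed \emph{towards} it (this convention is confirmed in the proof of Lemma~\ref{l:embed-nice-in-cut}, where the out‑leaf $\ell^+$ is embedded to an \emph{out}neighbour of the image of its centre, forcing $c\rarr\ell^+$ in $T$). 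When $A_i=\{\ell\}$ both (a) and (b) constrain the same unique edge incident to $\ell$, and (a) wants it oriented towards $\ell$ while (b) wants it oriented away from $\ell$. So a single‑vertex $A_i$ can never exist, and your pool of candidate $A_i$'s is empty, not size $\approx n/8$.

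This is why the paper uses pendant cherries for both families: a pendant cherry $\{c,\ell_1,\ell_2\}$ with pendant edge $cv$ becomes a valid $A_i$ when $c\rarr v$ (so the edge leaves $A_i$) and at least one of $\ell_1,\ell_2$ receives its edge from $c$ (so it is an out‑leaf of $T$), giving probability $\tfrac12\cdot\tfrac34=\tfrac38$; your $B_i$ calculation (probability $\tfrac14$) is correct. Crucially, the pendant‑edge orientations required for an $A_i$ and a $B_i$ are opposite, so no cherry is counted twice, and the resulting stars are automatically vertex‑disjoint. Your proof is salvageable by replacing the free‑leaf candidates for $A_i$ with the same cherry pool used for $B_i$ and noting this disjointness; the constants still comfortably exceed $n/250$ since $\tfrac{e^{-3}}{2}>\tfrac{1}{50}$ and $\min\{\tfrac38,\tfrac14\}>\tfrac15$.
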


Combining Theorems~\ref{t:good-unavoidable},~\ref{t:aas-max-degree} and~\ref{t:most-are-nice} 
(with~$C=\eps = 1$ and~$\alpha = \frac{1}{250}$) immediately proves Theorem~\ref{t:most-unavoidable}. 

Another natural question is to find, for a given oriented tree~$T$,
the smallest integer~$g(T)$ such that every tournament on~$g(T)$ vertices contains a copy of~$T$.
In particular,~$T$ is~unavoidable if and only if~$g(T) = |T|$.
Sumner conjectured that for every oriented tree~$T$ on~$n$ vertices we have~$g(T) \leq 2n-2$,
and the example of an outdirected star described above demonstrates that this bound would be best possible.
\Kuehn, \Mycroft\ and~\Osthus~\cite{KMO10:sumner_exact, KMO11:sumner_approximate}
used a randomised embedding algorithm to~prove that Sumner's conjecture holds
for every sufficiently large~$n$;
previous upper bounds on~$g(T)$ had been established by~\Chung~\cite{Chung82:trees},
\Wormald~\cite{wormald83}, \Haggkvist\ and~\Thomason~\cite{HagThom91}, 
\Havet~\cite{Havet2002}, \Havet\ and~\Thomasse~\cite{havetThomasse2000} and \Sahili~\cite{Sahili2004}.
In particular, \Sahili\ proved that~$g(T) \leq 3n-3$ for every oriented tree~$T$ on~$n$ vertices,
 and this remains the best known upper bound on~$g(T)$ for small~$n$. 
\Kuehn, \Mycroft\ and~\Osthus~\cite{KMO11:sumner_approximate} also gave a stronger bound 
for large oriented trees of bounded maximum degree, proving that for every~$\alpha, \Delta > 0$, 
if~$n$ is sufficiently large then every oriented tree~$T$ on~$n$ vertices with~$\Delta(T) \leq \Delta$ 
has~$g(T) \leq (1+\alpha) n$. In other words, bounded degree oriented trees are close to being unavoidable, 
in that they are contained in every tournament of slightly larger order.

Our proof of Theorem~\ref{t:good-unavoidable} makes use of the aforementioned random embedding algorithm 
of~\Kuehn, \Mycroft\ and~\Osthus, using somewhat sharper estimates on certain quantities associated with 
the random embedding. In particular, using these stronger estimates we are able to establish the same 
bound on~$g(T)$ for oriented trees whose maximum degree is at most polylogarithmic in~$n$ (rather than
bounded by a constant as above). This is the following theorem, which we use repeatedly in the 
proof of Theorem~\ref{t:good-unavoidable}, and which may be of independent interest.

\begin{theorem}\label{t:log-bounded-deg}
For every~$\alpha,C>0$ there exists~$n_0$ such that if~$T$ is an oriented tree on~$n \geq n_0$ 
vertices with~$\Delta(T)\leq (\log n)^C$ and~$G$ is a~tournament on at least~$(1+\alpha) n$ 
vertices, then~$G$ contains a copy of~$T$.
\end{theorem}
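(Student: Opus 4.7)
The proof adapts the randomised embedding algorithm of \Kuehn, \Mycroft\ and \Osthus~\cite{KMO11:sumner_approximate} -- which yields the same bound for $T$ of bounded maximum degree -- by replacing its probabilistic estimates with sharper versions that remain valid for $\Delta(T)\le(\log n)^C$. Root $T$ at an arbitrary vertex $r$, enumerate $V(T)=\{v_1=r,v_2,\ldots,v_n\}$ by breadth-first search, and embed the vertices $v_j$ in this order. Throughout the procedure, for each still-unembedded $v\in V(T)$ we maintain a candidate set $A(v)\subseteq V(G)$ of vertices still eligible to host $v$. At step $j$, pick $\phi(v_j)$ uniformly at random from $A(v_j)$, delete $\phi(v_j)$ from every other candidate set, and for each child $w$ of $v_j$ in the rooted tree replace $A(w)$ by $A(w)\cap N^+(\phi(v_j))$ or $A(w)\cap N^-(\phi(v_j))$ according to the orientation of the edge between $v_j$ and $w$ in $T$. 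The embedding succeeds provided $A(v_j)\ne\emptyset$ at every step $j$; to guarantee this it suffices to maintain the invariant that $|A(v)|\ge\gamma(|V(G)|-j+1)$ at step $j$ for every still-unembedded $v$, where $\gamma=\gamma(\alpha)>0$ is suitably chosen.

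The analytic core is to verify step by step that each neighbourhood restriction $A(w)\cap N^\pm(\phi(v_j))$ preserves the proportional lower bound on $|A(w)|$. Heuristically this holds because $\phi(v_j)$ is uniform on $A(v_j)$, which has size $\Omega(n)$, so with high probability $\phi(v_j)$ is a vertex with in- and out-neighbourhoods close to $|V(G)|/2$ each. In the bounded-degree regime each $v_j$ has only $O(1)$ children, so a concentration bound with constant slack per step, together with a union bound over the $O(n)$ embedding steps, suffices. In the present setting $v_j$ may have as many as $(\log n)^C$ children whose candidate sets are simultaneously restricted by the single random choice $\phi(v_j)$, so we must secure concentration strong enough to control all $(\log n)^C$ restrictions at once; this forces both the per-step multiplicative error and the per-step failure probability to be much smaller than the bounded-degree analysis would require.

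The main obstacle is extracting this sharper concentration. The plan is first to preprocess $G$ by discarding at most $\littleo(n)$ ``atypical'' vertices -- those whose neighbourhoods are particularly unbalanced -- which is affordable because $|V(G)|\ge(1+\alpha)n$ provides $\alpha n$ vertices of slack; and then to argue inductively that the random choice $\phi(v_j)\in A(v_j)$ is, with probability $1-\littleo(1/n)$, simultaneously good for all of its children's candidate sets. The crucial quantitative input is a sharpened concentration estimate for the uniform random vertex $\phi(v_j)\in A(v_j)$, tailored so that the per-restriction failure probability is of the form $\exp(-(\log n)^{\omega(1)})$, ample to survive a union bound over the up to $(\log n)^C$ children of each $v_j$ and then over the $n$ embedding steps. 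Combining these yields an overall failure probability of $\littleo(1)$; the algorithm therefore produces a complete copy of $T$ in $G$ with probability $1-\littleo(1)$, which is in particular positive, proving Theorem~\ref{t:log-bounded-deg}.
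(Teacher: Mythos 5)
Your plan is a direct uniform-random greedy embedding with candidate sets in the raw tournament, which is a genuinely different route from the paper's (the paper routes Theorem~\ref{t:log-bounded-deg} through the structural dichotomy of Lemma~\ref{l:struct} and the cluster allocation/embedding machinery of Section~\ref{s:alloc-embed}, i.e.\ Lemmas~\ref{l:F_i-stren}, \ref{l:3.3-stren} and~\ref{l:approx-embedding} and Corollary~\ref{c:cctcombined}, and then reuses the derivation of \Kuehn, \Mycroft\ and \Osthus\ verbatim with Corollary~\ref{c:cctcombined} in place of their Lemma~3.2). Unfortunately the proposed route has a genuine gap, and it is not where you locate it. The heuristic at its core --- that a vertex chosen uniformly from a candidate set of size $\Omega(n)$ has, with high probability, in- and out-neighbourhood close to $|V(G)|/2$ --- is false: in a transitive tournament only an $\eps$-fraction of the vertices are $\eps$-balanced, so the ``atypical'' vertices you propose to discard form a constant (indeed almost full) fraction of $V(G)$, not $\littleo(n)$, and no amount of sharpened concentration for the uniform choice can manufacture out-neighbours that $\phi(v_j)$ simply does not have. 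Concretely, run your algorithm on a transitive tournament with $T$ a directed path: each step restricts the next candidate set to the out-neighbourhood of a uniformly chosen vertex of the current one, so $|A(v_{j+1})|$ is roughly a uniform fraction of $|A(v_j)|$, and the invariant $|A(v)|\geq\gamma\bigl(|V(G)|-j+1\bigr)$ is already destroyed after $O(\log n)$ steps. (A non-uniform choice rescues this particular example, but it shows the claimed invariant is not maintained by the algorithm as stated.)

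The deeper obstruction is directional, not a matter of union bounds over $(\log n)^C$ children: if the tree edge is $v_j\rarr w$ but almost all edges of $G$ between $A(v_j)$ and $A(w)$ are directed from $A(w)$ to $A(v_j)$ (an almost-directed pair, which genuinely arises and cannot be removed by deleting $\littleo(n)$ vertices), then \emph{no} choice of $\phi(v_j)\in A(v_j)$, random or otherwise, leaves $w$ a candidate set of size $\Omega(n)$. This is exactly why the bounded-degree case already requires the global structure: either one exploits an almost-directed pair directly, or one works inside a regular \CCT, allocates the tree vertices to clusters by the Vertex Allocation Algorithm with martingale concentration (Lemma~\ref{l:3.3-stren}), and embeds using regularity (Lemma~\ref{l:approx-embedding}). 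The genuine content of extending to $\Delta(T)\leq(\log n)^C$ is the sharpening of those lemmas (the bound $3n^{1/3}(\log n)^{C(\log\log n)^3}\leq n^{5/12}$ in Lemma~\ref{l:F_i-stren}, and error terms decaying like $1/\lln$ in Lemma~\ref{l:3.3-stren}), not a per-step concentration estimate for a bare greedy procedure; as written, your argument would fail even for bounded-degree trees.
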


\subsection{Proof outline for Theorem~\ref{t:good-unavoidable}}

Our proof of Theorem~\ref{t:good-unavoidable} uses a structural 
characterisation of large tournaments (Lemma~\ref{l:struct}) which is obtained by combining 
results of~\Kuehn, \Mycroft\ and~\Osthus~\cite{KMO11:sumner_approximate}. 
Loosely speaking, this shows that every 
large tournament~$G$ has one of the following two possible structures. 
The first possibility is that~$V(G)$ can be partitioned into two sets~$U$ and~$W$ such that 
almost all edges of~$G$ between~$U$ and~$W$ are directed from~$U$ to~$W$. 
We refer to such a structure as an `almost-directed pair'. 
The second possibility is that~$V(G)$ contains disjoint subsets~$V_1, \dots, V_k$ of equal size 
called `clusters' whose union includes almost all vertices of~$G$ and such that the edges 
of~$G$ directed from~$V_i$ to~$V_\ipp$ (with addition taken modulo~$k$) are `randomlike'. 
We refer to this structure as a `\CCT'. Given a 
tournament~$G$ on~$n$ vertices and a nice oriented tree~$T$ on~$n$ vertices 
with polylogarithmic maximum degree we consider separately these two cases for the 
structure of~$G$. 

\medskip \noindent {\bf Almost-directed pairs.}
Suppose first that~$G$ admits an almost-directed pair~$(U, W)$. 
In this case we begin by identifying the set~$B$ of `atypical' 
vertices of~$G$, namely those which lie too many edges directed `the wrong way', 
that is, from~$W$ to~$U$. Since~$(U, W)$ is an almost-directed pair~$B$ must be 
small. We then choose a set~$S$ of~$|B|$ distinct vertices of~$T$, each of which 
lies in an out-star of~$T$ and is adjacent to both an in-leaf and an out-leaf 
of~$T$. We also choose a small set~$S^-$ of vertices of~$T$, each of which lies 
in an in-star of~$T$ and is adjacent to an out-leaf of~$T$, and a small set~$S^+$ 
of vertices of~$T$, each of which lies in an out-star of~$T$ and is adjacent to an in-leaf of~$T$. 
The fact that~$T$ is nice ensures that we can choose such sets. 
Having done so, we form a subtree~$T'$ of~$T$ by removing one out-leaf adjacent 
to each vertex in~$S^-$, one in-leaf adjacent to each vertex in~$S^+$, and 
one in-leaf and one out-leaf adjacent to each vertex in~$S$. 
We then embed~$T'$ in~$G$; this can be achieved by \emph{ad hoc} 
methods (Lemma~\ref{l:approx-embed-in-cut}) using the fact 
that~$G$ has slightly more vertices than~$T'$ to give us a little `room to spare'. 
Moreover, we can insist that the image~$P^-$ of~$S^-$ under this embedding 
has~$P^- \subseteq U$, and likewise that the image~$P^+$ of~$S^+$ has~$P^+ \subseteq W$. 

It then suffices to embed the removed leaves into the set~$Q \subseteq V(G)$
of vertices of~$G$ not covered by the embedding of~$T'$. 
To do this, we first embed the removed leaves adjacent to vertices of~$S$ so as to cover 
the set~$B$ of atypical vertices of~$G$. This is achieved as follows. Let~$b$ be an 
atypical vertex of~$G$, choose a vertex~$s \in S$, and let~$s^+$ and~$s^-$ be the removed
out-leaf and in-leaf (respectively) adjacent to~$s$. Since~$s$ is a vertex of~$T'$, 
$s$ has already been embedded in~$G$, say to a vertex~$x$. Let~$x^+$ be an outneighbour 
of~$x$ in~$Q$, and let~$x^-$ be an inneighbour of~$x$ in~$Q$ (our embedding 
of~$T'$ in~$G$ will ensure that such vertices exist). Since~$G$ is a tournament, 
we must have either an edge~$b \rarr x$ or~$x \rarr b$ in~$G$. In the former case 
we embed~$s^-$ to~$b$ and~$s^+$ to~$x^+$, and in the latter case we 
embed~$s^+$ to~$b$ and~$s^-$ to~$x^-$; either way we have extended our 
embedding to cover the atypical vertex~$b$.

Having dealt with all atypical vertices in this manner, we let~$Q^- \subseteq U$ and~$Q^+ \subseteq W$ 
be the sets of vertices in~$U$ and~$W$ respectively which remain uncovered. The only vertices of~$T$ not 
yet embedded are the removed neighbours of vertices in~$S^- \cup S^+$. We now use the fact that all 
vertices of~$Q^-$ and~$Q^+$ are typical to find perfect matchings in the graphs~$G[P^- \rarr Q^+]$ 
and~$G[Q^- \rarr P^+]$ (our embedding of~$T'$ in~$G$ will ensure for this that we 
have~$|P^-| = |Q^+| = |P^+| = |Q^-|$). Recall that each~$s \in S^-$ was embedded to some 
vertex~$p \in P^-$, which is matched to some~$q \in Q^+$; we embed the removed outneighbour 
of~$s$ to~$q$. Likewise, each~$s \in S^+$ was embedded to some vertex~$p \in P^+$, which is 
matched to some~$q \in Q^-$; we embed the removed inneighbour of~$s$ to~$q$. This completes 
the embedding of~$T$ in~$G$.

\medskip \noindent {\bf Cycles of cluster tournaments.}
Now suppose that~$G$ contains an almost spanning \CCT\ with 
clusters~$V_1, \dots, V_k$ of equal size. Again we begin by identifying the small 
set~$B$ of atypical vertices, which in this case are those vertices in some 
cluster~$V_i$ which have atypically small inneighbourhood in~$V_{i-1}$ or atypically 
small outneighbourhood in~$V_{i+1}$, as well as those vertices not contained in any 
cluster~$V_i$. We also choose a small set~$L$ of vertices of~$T$ each of which is 
adjacent to at least one in-leaf and at least one out-leaf of~$T$ (this is possible 
since~$T$ is nice). Following this we split~$T$ into subtrees~$T_1$ and~$T_2$ 
which partition the edge-set of~$T$ and have precisely one vertex in common, so 
that~$T_1$ and~$T_2$ each contain many vertices of~$L$. Next we form subtrees~$T_1'$ 
and~$T_2'$ of~$T_1$ and~$T_2$ respectively by removing one in-leaf and one 
out-leaf adjacent to each vertex of~$L$. Finally, we embed~$T$ into~$G$ by the following two steps. 

First, we embed~$T_1$ in~$G$ so that all atypical vertices are covered and also so that the 
number of vertices of~$T_1$ embedded in each cluster~$V_i$ is approximately equal 
(more specifically, with an additive error on the order of~$\frac{n}{\lln}$). To do this, 
we apply a `random embedding algorithm' of~\Kuehn, \Mycroft\ 
and~\Osthus~\cite{KMO11:sumner_approximate} to embed~$T'_1$ into~$G$ so that 
approximately the same number of vertices of each cluster are covered and also 
so that roughly the same number of vertices of~$L$ are embedded to each cluster. 
(In fact, at this point we use slightly sharper estimates on the numbers of vertices 
embedded in each cluster than those given in~\cite{KMO11:sumner_approximate}; these 
arise from the same proofs). Then, by a similar argument to that used for covering 
atypical vertices in the previous case, for each~$i \in [k]$ and each vertex~$x \in L$ 
which was embedded in the cluster~$V_i$ we may use the fact that~$G[V_i]$ is a 
tournament to choose an atypical vertex~$b$ and an uncovered vertex~$y \in V_i$ so 
that the removed inneighbour and outneighbour of~$x$ can be embedded to~$b$ and~$y$. 
This gives the desired embedding of~$T_1$ in~$G$.

Secondly, to complete the embedding of~$T$ in~$G$ we embed~$T_2$ into the uncovered 
vertices of~$G$ (except for the single common vertex of~$T_1$ and~$T_2$ which is 
already embedded). For this we again apply the random embedding algorithm 
to embed~$T_2'$ in~$G$ with approximately the same number of vertices 
embedded within each cluster. We then carefully embed the removed inneighbours and 
outneighbours of a small number of vertices of~$L$ to achieve the following property. 
Let~$U_i \subseteq V_i$ be the set of vertices of~$V_i$ which remain uncovered, and 
let~$P_i \subseteq V_i$ be the image of vertices of~$L$ embedded to~$V_i$ whose 
removed inneighbour and outneighbour have not yet been embedded. We ensure 
that $2|P_1| = \dots = 2|P_k| = |U_1| = \dots = |U_k|$. Having done so, we partition 
each set~$U_i$ into two equal-size parts~$U_i^-$ and~$U_i^+$, and use the fact that 
all vertices which remain uncovered are typical to find perfect matchings 
in~$G[U_\imm^- \rarr P_i]$ and~$G[P_i \rarr U_\ipp^+]$ for each~$i \in [k]$. Then, 
for each vertex~$x$ in~$L$ whose removed inneighbour and outneighbour have not 
yet been embedded, let~$p \in P_i$ be the vertex to which~$x$ was embedded, and 
let~$q^-$ and~$q^+$ be the vertices to which~$p$ is matched in~$U_{i-1}$ and~$U_{i+1}$ 
respectively. We may then embed the removed inneighbour and outneighbour 
of~$x$ to~$q^-$ and~$q^+$ respectively; doing so for every~$x \in L$ completes the 
embedding of~$T$ in~$G$.

\subsection{Structure of this paper}
This paper is organised as follows. In Section~\ref{s:notation} we give definitions and 
preliminary results which we will use later on in the paper. These include structural 
results for tournaments and probabilistic estimates. 
Next, in Section~\ref{s:alloc-embed} we consider the `random embedding algorithm' 
of \Kuehn, \Mycroft\ and \Osthus~\cite{KMO11:sumner_approximate} and explain 
how to modify the proofs of the associated results to obtain 
slightly sharper bounds. In particular this includes Theorem~\ref{t:log-bounded-deg}; we also 
use these sharper bounds when considering \CsCT\ (as described in the 
proof sketch above). 
In Section~\ref{s:embed-in-cut} we consider tournaments~$G$ whose vertex set can be 
partitioned into two large sets which form an almost-directed pair in~$G$, proceeding 
as outlined in the proof sketch above to show that every such tournament contains a copy 
of every nice oriented tree of polylogarithmic maximum degree 
(this is Lemma~\ref{l:embed-nice-in-cut}, which can be interpreted 
as proving Theorem~\ref{t:good-unavoidable} for such tournaments). Then, in 
Section~\ref{s:embed-in-CCT} we do the same for tournaments~$G$ which contain an 
almost-spanning \CCT\ (Lemma~\ref{l:in-CCT}), making use of the sharper estimates 
established in Section~\ref{s:alloc-embed}.
In Section~\ref{s:main-thms} we prove Theorem~\ref{t:good-unavoidable} by using 
the structural results of Section~\ref{s:notation} 
to show that every tournament must have one of the two structures described above, 
and then applying the results of 
Sections~\ref{s:embed-in-cut} and~\ref{s:embed-in-CCT}.
We also give the proof of Theorem~\ref{t:most-are-nice}. 
Finally, in Section~\ref{s:conclusion} we conclude by discussing 
related results and possible areas for future research. 

\section{Notation and auxiliary results}\label{s:notation}

A \defi{directed graph}, or \defi{digraph} for short, consists of a vertex set~$V$ and 
edge set~$E$, where each edge is an ordered pair of distinct vertices. 
We think of the edge~$(u, v)$ as being directed from~$u$ to~$v$, and 
write~$x\rarr y$ or~$y\larr x$ to denote the edge~$(x, y)$. 
In a digraph~$G$, the \defi{outneighbourhood}~$N^+_G(x)$ of a~vertex~$x$ 
is the set~$\{\,y : x\rarr y\in E(G)\,\}$.
Similarly, the~\defi{inneighbourhood}~$N^-_G(x)$ of~$x$ is the set~$\{\,y : x\larr y\in E(G)\,\}$.
The~\defi{outdegree} and~\defi{indegree} of~$x$ in~$G$ 
are respectively~$\deg^+_G(x) \deq \bigl|N^+_G(x)\bigr|$ and~$\deg^-_G(x) \deq \bigl|N^-_G(x)\bigr|$, 
and the~\defi{semidegree}~$\deg^0_G(x)$ of~$x$ is the
minimum of the outdegree and indegree of~$x$.
The \defi{minimum semidegree} of~$G$ is~$\delta^0(G) \deq \min_{x\in V(G)} \deg^0_G(x)$.
For any subset~$Y \subseteq V(G)$,
we write~$\deg_G^-(x,Y)$ for~$|N_G^-(x)\cap Y|$, 
the \defi{indegree of~$x$ in~$Y$}; 
the \defi{outdegree of~$x$ in~$Y$},
denoted by~$\deg_G^+(x,Y)$, is~defined similarly. 
The \defi{semidegree of~$x$ in~$Y$},
denoted by~$\deg_G^0(x,Y)$,
is the~minimum of~those two~values.
We drop the subscript when there is no danger of confusion, 
writing~$N^-(x)$,~$\deg^0(x)$, and so forth. 
Also, we write~$|G|$ and~$e(G)$
for the number of vertices and edges of~$G$ respectively. 
For digraphs~$G$ and~$H$ we say that~$H$ is a \defi{subgraph} of 
$G$ if~$V(H) \subseteq V(G)$ and~$E(H) \subseteq E(G)$.
For any set~$X \subseteq V(G)$,
we write~$G[X]$ for the subgraph of~$G$ \defi{induced} by~$X$, which has
vertex set~$X$ and whose edges are all edges of~$G$ with both endvertices
in~$X$. If~$H$ is a subgraph of~$G$ then
we write~$G-H$ for~$G\bigl[V(G)\setminus V(H)\bigr]$. Likewise, for a vertex~$v$ or set of vertices~$S$, 
we write~$G - v$ or~$G - S$ for~$G\bigl[V(G)\setminus \{v\}\bigr]$ 
or~$G\bigl[V(G)\setminus S\bigr]$ respectively.

Every digraph considered in this paper will be an \defi{oriented graph}, 
meaning that there is at most one edge between each pair of vertices (and there are no loops). 
Equivalently, an oriented graph~$G$ can be formed by orienting each 
edge of some (undirected) graph~$H$; 
in this case we refer to~$H$ as the \defi{underlying graph} of~$G$, 
and say that~$G$ is an \defi{orientation} of~$H$. We refer to the~\defi{maximum degree} of 
an oriented graph~$G$, denoted~$\Delta(G)$, to mean the maximum degree of the underlying oriented graph~$H$.
A \defi{tournament} is an orientation of a~complete graph, 
and a \defi{subtournament} of a 
tournament~$G$ is a subgraph of~$G$ which is a tournament. 
A~\defi{regular tournament} is a tournament in which every vertex has equal indegree and outdegree; 
it is easily checked that regular tournaments of order~$n$ exist for every odd~$n \in \bbn$. 
A~\defi{transitive tournament} is a tournament whose vertices can be ordered~$v_1, \dots, v_n$ such 
that~$v_i \rarr v_j$ is an edge for each~$i < j$.
A~\defi{directed path} of length~$k$ is an oriented graph with vertex set~$v_0,\ldots, v_k$
and edges~$v_\imm\rarr v_i$ for each~$1 \leq i \leq k$, and an \defi{antidirected path} of 
length~$k$ is an oriented graph with vertex set~$v_0,\ldots, v_k$
and edges~$v_\imm\rarr v_i$ for odd~$i \leq k$ and~$v_\imm\larr v_i$ for even~$i \leq k$ 
(or \emph{vice versa}). A~\defi{directed cycle} of length~$k$ is an oriented graph with 
vertex set~$v_1,\ldots,v_k$ and edges
$v_i\rarr v_\ipp$ for each~$1 \leq i \leq k$ with addition taken modulo~$k$.

A~\defi{tree} is an acyclic connected graph, and an~\defi{oriented tree} 
or~\defi{directed tree} is an orientation of a tree. 
A~\defi{leaf} in a tree or oriented tree is a vertex of degree one. 
A~\defi{star} is a tree in which at most one~vertex (the~\defi{centre}) is not a leaf. 
A \defi{subtree}~$T'$ of a tree~$T$ is a subgraph of~$T$ which is also a tree, and we define subtrees 
of oriented trees similarly. For oriented trees~$T$ and~$T'$ we say that~$T'$ is 
an~\defi{out-subtree} (respectively an~\defi{in-subtree}) of~$T$ 
if both~$T'$ and~$T - T'$ are subtrees of~$T$, and
the unique edge of~$T$ between~$T'$ and~$T-T'$ is directed towards~$T'$
(respectively away from~$T'$). In a similar way we say that a vertex is 
an~\defi{in-leaf} or~\defi{out-leaf} of~$T$. 
Now let~$T$ be a tree or oriented tree.
It is often helpful to nominate a vertex~$r$ of~$T$ as the~\defi{root}
of~$T$; to emphasise this fact we sometimes refer to~$T$ as 
a~\defi{rooted tree}. If so, then every vertex~$x$ other than~$r$ has
a~unique~\defi{parent}; this is defined to be the neighbour~$p$ of~$x$
in the unique path in~$T$ from~$x$ to~$r$,
and~$x$ is said to be a~\defi{child} of~$p$. An~\defi{ancestral ordering} of the vertices of a rooted
tree~$T$ is an ordering of~$V(T)$ in which the root vertex appears first and every non-root vertex 
appears later than its parent.
Where it is clear from the context that an oriented tree is oriented, 
we may refer to it simply as a tree.

We say that a sequence of events~$A_1, A_2, \dots$ holds 
\defi{asymptotically almost surely}
if~$\bbp(A_n) \to 1$ as~$n \to \infty$.
Likewise, in this paper all occurrences of the standard asymptotic notation~$\littleo(f)$ 
refer to sequences~$f(n)$ with parameter~$n$ as~$n \to \infty$.
We will often have sets indexed by~$\{1,2,\ldots,k\}$ (\emph{e.g.}~$V_1, \dots, V_k$),
and addition of indices will always be~performed modulo~$k$.
Also, if~$\phi\colon A\to B$ is a function from~$A$ to~$B$ and~$A'\subseteq A$,
then we write~$\phi(A')$ for the image of~$A'$ under~$\phi$.
We omit floors and ceilings whenever they do not affect the argument,
and~write~$a = b \pm c$ to indicate that~$b-c\leq a\leq b+c$.
For~$k \in \bbn$ we denote by~$[k]$ the set~$\{1,2,\ldots,k\}$, and write~$\binom{S}{k}$
to denote the set of all~$k$-element subsets of a set~$S$.
We use the notation~$x\ll y$ to indicate that
for every positive~$y$ there exists a positive number~$x_0$
such that for every~$0<x<x_0$ the subsequent statements hold.
Such statements with more variables are defined similarly.
We always write~$\log x$ to mean the natural logarithm of~$x$.

\subsection{Structural results for tournaments}

Let~$G$ be a bipartite graph with vertex classes~$A$ and~$B$. Loosely speaking,
$G$ is~`regular' if 
the edges of~$G$ are `randomlike' in the sense that they 
are distributed roughly uniformly.
More formally, for any sets~$X \subseteq A$ and~$Y \subseteq B$, 
we write~$G[X,Y]$ for the bipartite subgraph of~$G$ with vertex classes~$X$ and~$Y$ 
and whose edges are the edges of~$G$
with one endvertex in each of the sets~$X$ and~$Y,$
and define the~\defi{density}~$d_G(X,Y)$ of edges between~$X$ and~$Y$ to be 
$d_G(X,Y) \deq e\bigl(G[X,Y]\bigr)/|X||Y|$.
Then, for any~$d, \eps > 0$, we say that~$G$ is~\defi{$(d,\eps)$-regular} if
for every~$X\subseteq A$ and every~$Y\subseteq B$
such that~$|X|\geq \eps|A|$ and~$|Y|\geq \eps|B|$
we have~$d_G(X,Y) = d \pm \eps$.
The following well-known proposition is immediate from this definition.

\begin{lemma}[Slicing lemma]\label{l:slice-pair}
  Fix~$\alpha, \eps, d > 0$ and let~$G$ be a~$(d,\eps)$-regular bipartite 
  graph with vertex classes~$A$ and~$B$. 
  If~$A'\subseteq A$ and~$B'\subseteq B$ have sizes 
  $|A'| \geq \alpha|A|$ and~$|B'| \geq \alpha|B|$,
  then~$G[A',B']$ is~$(d, \eps/\alpha)$-regular.
\end{lemma}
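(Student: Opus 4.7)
The plan is to unpack the definition of $(d,\eps/\alpha)$-regularity for $G[A',B']$ and reduce it directly to the hypothesis that $G$ is $(d,\eps)$-regular. So I would fix arbitrary subsets $X\subseteq A'$ and $Y\subseteq B'$ satisfying $|X|\geq (\eps/\alpha)|A'|$ and $|Y|\geq (\eps/\alpha)|B'|$, and aim to show $d_{G[A',B']}(X,Y) = d\pm \eps/\alpha$.

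First I would observe that since $X\subseteq A'\subseteq A$ and $Y\subseteq B'\subseteq B$, the edges of $G[A',B']$ with one endvertex in $X$ and one in $Y$ are exactly the edges of $G$ with one endvertex in $X$ and one in $Y$. Hence $d_{G[A',B']}(X,Y) = d_G(X,Y)$, and the question reduces to bounding $d_G(X,Y)$.

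Next I would verify that $X$ and $Y$ are large enough to invoke the $(d,\eps)$-regularity of $G$. Using $|A'|\geq \alpha|A|$ and $|B'|\geq \alpha|B|$, the hypotheses on $|X|$ and $|Y|$ give
\[
|X|\;\geq\;\frac{\eps}{\alpha}|A'|\;\geq\;\frac{\eps}{\alpha}\cdot\alpha|A|\;=\;\eps|A|,
\qquad
|Y|\;\geq\;\frac{\eps}{\alpha}|B'|\;\geq\;\eps|B|.
\]
Therefore $(d,\eps)$-regularity of $G$ yields $d_G(X,Y) = d\pm\eps$, and since we may assume $\alpha\leq 1$ (otherwise $A'=A$ and $B'=B$ and there is nothing to prove), $\eps\leq \eps/\alpha$, so in particular $d_G(X,Y)=d\pm\eps/\alpha$. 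Combined with the previous step this is exactly what we needed.

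There is really no obstacle here: the lemma is a direct consequence of unpacking the definition, and the only thing worth checking is the size bound that lets us apply $(d,\eps)$-regularity to the restricted subsets. The bound $\eps/\alpha$ in the conclusion arises precisely because $A'$ and $B'$ can be an $\alpha$-fraction of $A$ and $B$, so a subset of density $\eps/\alpha$ in $A'$ still meets the threshold $\eps$ in $A$.
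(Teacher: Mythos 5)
Your proof is correct and is exactly what the paper has in mind: the paper gives no proof at all, stating only that the slicing lemma ``is immediate from this definition,'' and your argument is precisely that direct unpacking — restrict to $X\subseteq A'$, $Y\subseteq B'$ of density at least $\eps/\alpha$, note $|X|\geq\eps|A|$ and $|Y|\geq\eps|B|$, apply $(d,\eps)$-regularity of $G$, and observe $\eps\leq\eps/\alpha$.
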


We say that~$G$ is~\defi{$(\dplus, \eps)$-regular} if~$G$ is~$(d', \eps)$-regular for
some~$d' \geq d$. Another immediate consequence of the definition of regularity is that, for small~$\eps$,
if~$G$ is~$(d, \eps)$-regular then almost all vertices of~$A$ have 
degree close to~$d|B|$ in~$B$ and almost all vertices of~$B$ have degree close
to~$d|A|$ in~$A$. We say that~$G$ is `super-regular' if no vertex has degree much lower 
than this. More precisely,~$G$ is~\defi{$(d,\eps)$-super-regular}
if~$(A, B)$ is~$(\dplus,\eps)$-regular and also for every~$a\in A$
and~$b\in B$ we have~$\deg(a,B) \geq (d-\eps)|B|$ and~$\deg(b,A) \geq (d-\eps)|A|$.

To complete the embedding of a spanning oriented tree in a tournament, we will make use of the 
following well-known lemma, which states that every balanced super-regular bipartite graph
contains a perfect matching (a bipartite graph is \defi{balanced} if its vertex classes have equal size).

\begin{lemma}\label{l:matching}
For every~$\eps, d > 0$ with~$d \geq 2\eps$, if~$G$ is a~$(d, \eps)$-super-regular balanced 
bipartite graph, then~$G$ contains a perfect matching.
\end{lemma}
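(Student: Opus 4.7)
The plan is to deduce the existence of a perfect matching from Hall's theorem: it suffices to verify that $|N_G(S)| \geq |S|$ for every $S \subseteq A$, where $A$ and $B$ (with $|A| = |B| = n$) are the vertex classes. I would split into three regimes by size of $S$, with the extreme regimes handled by the super-regularity hypothesis on individual degrees and the middle regime handled by regularity.

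\textbf{Small $S$.} If $|S| < \eps n$, pick any $s \in S$. By super-regularity, $|N_G(s)| \geq (d-\eps)n \geq \eps n > |S|$, so $|N_G(S)| \geq |N_G(s)| \geq |S|$.

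\textbf{Medium $S$.} Suppose $\eps n \leq |S|$ and $|N_G(S)| < |S|$ (aiming for a contradiction). Let $T \deq B \setminus N_G(S)$, which by definition has no edges to $S$. If $|T| \geq \eps n = \eps|B|$ then, since $(A,B)$ is $(\dplus,\eps)$-regular and $|S| \geq \eps|A|$, the slicing-free direct application of $(d',\eps)$-regularity (for the relevant $d' \geq d$) forces $d_G(S,T) \geq d - \eps > 0$, so there is at least one edge between $S$ and $T$, a contradiction.

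\textbf{Large $S$.} The remaining case is $|T| < \eps n$, equivalently $|N_G(S)| > (1-\eps)n$, hence also $|S| > (1-\eps)n$. If $T = \emptyset$ then $|N_G(S)| = n \geq |S|$, contradicting the assumed Hall defect. Otherwise pick $t \in T$; since $t$ has no neighbour in $S$, we have $N_G(t) \subseteq A \setminus S$, so $\deg_G(t,A) \leq |A \setminus S| < \eps n$. But super-regularity gives $\deg_G(t,A) \geq (d-\eps)n \geq \eps n$, a contradiction.

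Combining the three cases, Hall's condition holds for every $S \subseteq A$, so $G$ admits a perfect matching. The only subtle point is choosing the threshold $d \geq 2\eps$ correctly so that $d - \eps \geq \eps$; this is exactly what makes both the small-$S$ and the large-$S$ arguments go through simultaneously, and is the reason the lemma is stated with this hypothesis. There is no real obstacle beyond keeping track of which direction of the super-regularity (degrees of $A$-vertices versus degrees of $B$-vertices) is used in each case.
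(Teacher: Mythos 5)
Your proposal is correct and follows essentially the same argument as the paper: verify Hall's condition by splitting into three size regimes, using the super-regular degree bound for small and large $S$ and the $(\dplus,\eps)$-regularity (no large empty bipartite pair) for the middle regime, with $d \geq 2\eps$ ensuring $(d-\eps)n \geq \eps n$ exactly as in the paper. The only cosmetic difference is that you phrase the middle and large cases by contradiction via the non-neighbourhood $T = B \setminus N_G(S)$, whereas the paper argues directly, but the underlying estimates are identical.
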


\begin{proof}
Let~$A$ and~$B$ be the vertex classes of~$G$, and let~$m$ denote their common size.
  Consider an arbitrary set~$S \subseteq A$, and let~$N(S) \subseteq B$ denote the set of vertices 
  of~$G$ with a neighbour in~$S$.
  If~$|S|<\eps m$, then for each~$a \in S$ we have~$\deg(a, B) \geq (d-\eps)m \geq \eps m > |S|$, 
  so certainly~$\bigl|N(S)\bigr| \geq |S|$.
  Alternatively, if~$\eps m \leq |S|\leq (1-\eps)m$,
  then, since~$G$ is~$(\dplus, \eps)$-regular, at most~$\eps m$ vertices 
  of~$B$ have no neighbours in~$S$,
  so~$\bigl|N(S)\bigr| \geq (1-\eps)m\geq |S|$.
  Finally, if~$|S|>(1-\eps)m$
  then every vertex~$b \in B$ has a neighbour in~$S$,
  since~$\deg(b, A) \geq (d-\eps)m \geq \eps m > |A \sm S|$,
  so~$\bigl|N(S)\bigr| = m \geq |S|$. 
  In each case \Hall's criterion holds, that is, 
  we have~$\bigl|N(S)\bigr|\geq |S|$ for every subset~$S \subseteq A$.
  So~$G$ contains a perfect matching.
\end{proof}

Now let~$G$ be a digraph. For disjoint subsets~$X, Y \subseteq V(G)$ 
we denote by~$G[X \rarr Y]$, or equivalently by~\mbox{$G[Y\larr X]$}, 
the subdigraph of~$G$ with vertex set~$X \cup Y$ and edge set  
\[E(X \rarr Y) \deq \{\,x\rarr y \in E(G) : x\in X, y\in Y\,\}.\]
We call the ordered pair~$(X, Y)$ a \defi{directed pair} in~$G$ if there are no edges in~$G[X\larr Y]$,
that is, if every edge between~$X$ and~$Y$ is directed towards~$Y$.
Similarly, for any~$\mu \geq 0$ we call~$(X,Y)$ a~\defi{$\mu$-almost-directed pair} 
if~$e\bigl(G(X\larr Y)\bigr) \leq \mu|X||Y|$,
so any directed pair is a 0-almost-directed pair. These structures will play a key role in our proof.

Observe that the underlying graph of~$G[X \rarr Y]$
is a bipartite graph with vertex classes~$X$ and~$Y$. We say that~$G[X \rarr Y]$ is 
$(d, \eps)$-regular (respectively~$(d, \eps)$-super-regular) to mean that this underlying graph is
$(d, \eps)$-regular (respectively~$(d, \eps)$-super-regular). 
In this way we may apply the previous results of this subsection
to directed graphs. 

We now define another structure which is crucial for our proof.
Let~$d$ and~$\eps$ be positive real numbers, and let~$G$
be a digraph whose vertex
set is the disjoint union of~sets~$V_1,\ldots,V_k$.
We say that~$G$ is a~\defi{$(d,\eps)$-regular \CCT} 
if for each~\ink\ the induced subgraph~$G[V_i]$ is a tournament and the 
digraph~$G[V_i \rarr V_\ipp]$ is~$(\dplus,\eps)$-regular
(where addition on the subscript is taken modulo~$k$).
Likewise, we say that~$G$ is a~\defi{$(d, \eps)$-super-regular~\CCT}
if for each~\ink\ the induced subgraph~$G[V_i]$ is a tournament and the 
digraph~$G[V_i \rarr V_\ipp]$ is~$(d,\eps)$-super-regular.
In either case we refer to the sets~$V_1,\ldots,V_k$ as the~\defi{clusters}
of~$G$. 

The following lemma, a combination of two lemmas of \Kuehn, \Mycroft\
and~\Osthus~\cite{KMO11:sumner_approximate} about so-called `robust outexpanders', 
shows that every tournament with large minimum semidegree either admits a partition~$\{S, S'\}$ 
where~$S$ and~$S'$ are not too small and~$(S, S')$ is an almost-directed pair, or contains 
an almost-spanning \CCT.

\begin{lemma}[{\cite[Lemmas~2.7 and~2.8]{KMO11:sumner_approximate}}]\label{l:struct}
  Suppose that
  $1/n\ll 1/k_1\ll 1/k_0 \ll\eps\ll d\ll\mu\ll\nu\ll\eta,$
  and let~$G$ be a tournament on~$n$ vertices. Then either
\begin{enumerate}[label=(\alph*)]
\item $\delta^0(G) < \eta n$,
\item there is a partition of~$V(G)$ into sets~$S$ and~$S'$ with $\nu n < |S|,|S'|< (1-\nu)n$
and such that~$(S, S')$ is a~$\mu$-almost-directed pair in~$G$, or
\item there is an integer~$k$ with~$k_0\leq k\leq k_1$ for which~$G$ contains a~$(d,\eps)$-regular \CCT\ 
  with~clusters~$V_1,\ldots,V_k$ of equal size
  such that~$\bigl|\bigcup_{i=1}^k V_i\bigr|>(1-\eps)n$.
\end{enumerate}
\end{lemma}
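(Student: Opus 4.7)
Since the lemma is declared to be a combination of two results of \Kuehn, \Mycroft\ and~\Osthus~\cite{KMO11:sumner_approximate}, the plan is to chain them together via the notion of a \emph{robust outexpander}: a digraph~$G$ in which, for every set~$S$ of not-too-small and not-too-large size, the number of vertices with many inneighbours in~$S$ exceeds~$|S|$ by a definite amount. The first of the two cited lemmas should produce a dichotomy for tournaments with large minimum semidegree, of the form ``either the tournament admits an almost-directed cut, or it is a robust outexpander''; the second should say that every robust outexpander with large minimum semidegree contains an almost-spanning \CCT. If both hold, we simply compose them.

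More concretely, first I would dispose of case~(a) by assuming~$\delta^0(G)\geq \eta n$. I would then introduce auxiliary parameters~$\nu'$ and~$\tau$ with~$\nu\ll \nu'\ll \tau\ll \eta$, and apply Lemma~2.7 of~\cite{KMO11:sumner_approximate}: either there exists a partition~$V(G) = S\cup S'$ with~$\nu n<|S|,|S'|<(1-\nu)n$ such that almost every edge between~$S$ and~$S'$ points from~$S$ to~$S'$ (giving case~(b) after choosing~$\mu$ small enough), or~$G$ is a robust~$(\nu',\tau)$-outexpander. In the latter case, I would then invoke Lemma~2.8 of~\cite{KMO11:sumner_approximate}: since~$G$ is a tournament with large minimum semidegree which robustly outexpands, an application of \Szemeredi's regularity lemma yields a regular partition~$V_0\cup V_1\cup\dots\cup V_k$ of~$V(G)$ whose reduced digraph inherits a (weaker) form of robust outexpansion, hence contains a Hamilton cycle. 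Lifting this Hamilton cycle back to~$G$ gives an almost-spanning \CCT\ with clusters of equal size (after discarding a few vertices into~$V_0$), yielding case~(c).

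The main bookkeeping obstacle is arranging the hierarchy of constants so that the parameters output by the first lemma (the expansion constants~$\nu',\tau$) feed correctly into the second lemma (which produces density~$d$, regularity~$\eps$, and cluster-count bounds~$k_0, k_1$), and so that the small-cut threshold used to detect case~(b) matches the~$\mu$-almost-directed bound demanded in the statement. Beyond that, the argument is a direct composition; the nontrivial content lies in the two cited lemmas, which I would quote verbatim rather than reprove.
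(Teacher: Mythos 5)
Your proposal is correct and takes essentially the same route as the paper: the paper gives no independent proof of this lemma, stating it directly as a combination of Lemmas~2.7 and~2.8 of \Kuehn, \Mycroft\ and \Osthus~\cite{KMO11:sumner_approximate}, which is exactly the composition you describe (large minimum semidegree, then either an almost-directed cut or robust outexpansion, and robust outexpansion yields the almost-spanning \CCT). The only remaining work is the constant bookkeeping you already flag, so nothing further is needed.
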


\subsection{Useful estimates and bounds}

In this section we present various useful estimates.
The first is the following lemma 
which is used in Section~\ref{s:alloc-embed} to show that our
random allocation of vertices of an oriented tree~$T$ to the clusters of a cycle of cluster
tournaments~$G$ gives a~roughly uniform distribution.
We write~$\calb(n,p)$ to denote the binomial distribution
(the result of~$n$~independent \Bernoulli~experiments,
each with success probability~$p$).

\begin{lemma}\label{l:bin-modulo}
Suppose that~$1/n \ll 1/k$. If~$X \deq \calb\bigl(n,\frac{1}{2}\bigr)$, then for every~$r \in [k]$ we have
\[
\bbp(\,X\equiv r\mod k\,)=\frac{1}{k}\pm \frac{4}{\sqrt{n}}.
\]
\end{lemma}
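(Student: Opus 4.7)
The plan is to use the standard roots-of-unity filter. Set $\omega \deq e^{2\pi i/k}$, a primitive $k$-th root of unity. Since $\tfrac{1}{k}\sum_{j=0}^{k-1}\omega^{j(m-r)}$ equals $1$ when $m \equiv r \bmod k$ and $0$ otherwise, I may write
$$\bbp(X \equiv r \bmod k) = \frac{1}{k}\sum_{j=0}^{k-1} \omega^{-jr}\, \bbe\bigl[\omega^{jX}\bigr].$$
The $j = 0$ term contributes exactly $1/k$, the required main term, so the entire task reduces to bounding the modulus of the sum over $j = 1, \ldots, k-1$.

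Next I would compute the characteristic function. Writing $X = Y_1 + \cdots + Y_n$ as a sum of independent $\calb(1, \tfrac{1}{2})$ random variables, independence yields
$$\bbe[\omega^{jX}] = \left(\frac{1 + \omega^j}{2}\right)^n,$$
and the identity $1 + e^{i\theta} = 2\cos(\theta/2)\, e^{i\theta/2}$ gives $\bigl|\bbe[\omega^{jX}]\bigr| = |\cos(\pi j/k)|^n$.

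The key observation is that for every $j \in \{1, \ldots, k - 1\}$ we have $|\cos(\pi j/k)| \leq \cos(\pi/k)$, with equality at $j = 1$ and $j = k-1$. Combining this with the elementary bound $1 - \cos(\pi/k) = 2\sin^2(\pi/(2k)) \geq 2/k^2$ gives
$$\left|\bbp(X \equiv r \bmod k) - \frac{1}{k}\right| \leq \frac{1}{k}\sum_{j=1}^{k-1} |\cos(\pi j/k)|^n \leq \cos^n(\pi/k) \leq e^{-2n/k^2}.$$
The hierarchy $1/n \ll 1/k$ ensures $n/k^2 \to \infty$, so $e^{-2n/k^2} \leq 4/\sqrt{n}$ for sufficiently large $n$, which is the required bound.

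There is no real obstacle here: the argument is essentially routine Fourier analysis on $\mathbb{Z}/k\mathbb{Z}$. The only point requiring any thought is the uniform bound $|\cos(\pi j/k)| \leq \cos(\pi/k)$ together with the elementary estimate on $\cos(\pi/k)$. Note that the $4/\sqrt{n}$ in the statement is a very generous error term, as the true error decays exponentially in $n/k^2$; presumably this weaker polynomial bound is adopted merely for convenience in the later applications of the lemma.
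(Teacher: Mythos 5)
Your proof is correct, but it proceeds by a different route from the paper. The paper does not compute anything Fourier-analytic: it sets $p_\mu \deq \max_x \bbp(X=x)$, quotes the argument of K\"uhn, Mycroft and Osthus (proof of their Lemma~2.1) that $\bbp(X\equiv r \bmod k)=\frac1k\pm 2p_\mu$ for any $k$, and then invokes the standard local estimate $p_\mu \sim 1/\sqrt{\pi n/2}$ for the binomial distribution, so that $2p_\mu \leq 4/\sqrt n$. You instead use the roots-of-unity filter, the characteristic function $\bbe[\omega^{jX}]=\bigl((1+\omega^j)/2\bigr)^n$ with $\bigl|\bbe[\omega^{jX}]\bigr|=|\cos(\pi j/k)|^n$, the endpoint bound $|\cos(\pi j/k)|\leq\cos(\pi/k)$ for $1\leq j\leq k-1$, and $1-\cos(\pi/k)=2\sin^2(\pi/2k)\geq 2/k^2$ (via $\sin x\geq 2x/\pi$); all of these steps check out, and the hierarchy $1/n\ll 1/k$ indeed turns $\ee^{-2n/k^2}$ into a bound below $4/\sqrt n$. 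What each approach buys: yours is self-contained and gives an error exponentially small in $n/k^2$, but it genuinely needs $n$ large compared with $k^2\log n$, i.e.\ it uses the hierarchy; the paper's approach gives the bound $2p_\mu$ uniformly in $k$ with no such condition, at the cost of citing an external argument and a standard binomial estimate, and from that method the stated $4/\sqrt n$ is essentially the natural error term (about $1.6/\sqrt n$), not merely a generous simplification as you suggest --- the exponential improvement is only available because $k$ is fixed relative to $n$.
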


\begin{proof}
Define~$p_\mu \deq \max_{x\in\{0,\ldots,n\}} \bbp(X=x)$. 
\Kuehn, \Mycroft\ and \Osthus~\cite[proof of Lemma~2.1]{KMO11:sumner_approximate} 
gave a straightforward argument to show that 
$\bbp(\,X\equiv r\mod k\,)=\frac{1}{k}\pm 2p_\mu$,
and the result then follows from a~standard estimate
on the binomial distribution (see, for example,~\cite[Section 1.2]{bollobas01:_random})
which states that~$p_\mu \sim 1/\sqrt{\pi n/2}$.
\end{proof}

The following straightforward lemma shows that a tournament can only have a few vertices of 
small in- or outdegree.

\begin{lemma}\label{l:semidegree}
For each~$d \in \bbn$, every tournament contains at most~$4d-2$ vertices 
with semidegree less than~$d$.
\end{lemma}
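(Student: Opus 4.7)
Let $G$ be a tournament and, writing $d$ for the parameter in the statement, set
$A \deq \{\,v \in V(G) : \deg_G^+(v) < d\,\}$ and $B \deq \{\,v \in V(G) : \deg_G^-(v) < d\,\}$.
The set of vertices of semidegree less than $d$ is precisely $A \cup B$, so it suffices to prove that $|A| \leq 2d-1$ and $|B| \leq 2d-1$, whence $|A \cup B| \leq |A| + |B| \leq 4d-2$, as required.

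To bound $|A|$, the plan is to restrict attention to the subtournament $G[A]$ and count edges there. Since $G[A]$ is itself a tournament on $|A|$ vertices, it contains exactly $\binom{|A|}{2}$ edges, and therefore the outdegrees of vertices of $A$ in $G[A]$ sum to $\binom{|A|}{2}$. On the other hand, every $v \in A$ satisfies $\deg_{G[A]}^+(v) \leq \deg_G^+(v) \leq d-1$, so the same sum is at most $(d-1)|A|$. Comparing these two expressions yields $|A|(|A|-1)/2 \leq (d-1)|A|$, and hence $|A| \leq 2d-1$. The bound $|B| \leq 2d-1$ is obtained by the symmetric argument applied to indegrees in $G[B]$ (or, equivalently, by applying the previous argument to the reverse tournament).

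There is no real obstacle here; the only thing to be careful about is not to overcount when combining the bounds on $A$ and $B$, which is avoided by using the trivial inequality $|A \cup B| \leq |A| + |B|$ rather than any more delicate inclusion--exclusion.
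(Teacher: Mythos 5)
Your proposal is correct and is essentially the same argument as the paper's: counting the $\binom{|A|}{2}$ edges of the subtournament on the low-outdegree vertices against the outdegree bound to get $|A|\leq 2d-1$, doing the symmetric count for indegrees, and adding the two bounds. No issues.
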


\begin{proof}
Let~$G$ be a tournament, and let~$X$ be the set of vertices~$x \in V(G)$ with 
$\deg^+(x) \leq d-1$. Then
\[\binom{|X|}{2} = e\bigl(G[X]\bigr) \leq \sum_{x \in X} \deg^+(x) \leq (d-1)|X|,\]
where the central inequality holds because every edge of~$G[X]$ contributes one
to the given sum. It follows that~$|X| \leq 2d-1$, that is,
there are at most~$2d-1$ vertices with outdegree less than~$d$. 
Essentially the same argument shows that there
are at most~$2d-1$ vertices with indegree less than~$d$, so in total at most 
$4d-2$ vertices have semidegree less than~$d$.
\end{proof}

Suppose~$N$ is an~$n$-element set, and let~$M$ be a subset of~$N$ with~$m$ elements.
If we choose a subset~$S \in \binom{N}{k}$ uniformly at random,
then the~random variable~$X = |S\cap M|$ is said to have \defi{hypergeometric distribution}
with parameters~$n,m$ and~$k$. Note that the expectation of~$X$ is then~$\bbe X = km/n$.

\begin{theorem}[\cite{janson00:_random_graph}, Corollary 2.3 and~Theorem~2.10]
  \label{t:exp}
  For every~$0<a<3/2$, if~$X$ has binomial or hypergeometric distribution,
  then~$\bbp\bigl(\,|X-\bbe X|\geq a\bbe X\,\bigr)\leq 2\exp(-a^2\bbe X/3)$.
\end{theorem}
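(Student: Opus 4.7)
The plan is to prove the bound via a unified Chernoff--Hoeffding moment generating function (MGF) argument, handling the binomial case directly and then reducing the hypergeometric case to it.

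For the binomial case, write $X=\sum_{i=1}^n X_i$ with independent $X_i\sim\mathrm{Bernoulli}(p_i)$ and set $\mu \deq \bbe X=\sum p_i$. For the upper tail I would apply Markov's inequality to $e^{tX}$ with $t>0$ and use the pointwise bound $\bbe e^{tX_i}=1+p_i(e^t-1)\leq \exp\bigl(p_i(e^t-1)\bigr)$ together with independence to get
\[
\bbp\bigl(X\geq (1+a)\mu\bigr)\leq \exp\bigl(\mu(e^t-1)-t(1+a)\mu\bigr).
\]
Optimising by taking $t=\log(1+a)$ yields $\bbp\bigl(X\geq(1+a)\mu\bigr)\leq \exp(-\mu\,\phi(a))$, where $\phi(a)\deq (1+a)\log(1+a)-a$. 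A direct calculus check (comparing values at the endpoints and examining the derivative of $\phi(a)-a^2/3$) establishes the inequality $\phi(a)\geq a^2/3$ on the interval $(0,3/2)$, which gives the upper-tail bound $\exp(-a^2\mu/3)$. For the lower tail one runs an analogous argument with $t<0$: writing $\psi(a)\deq (1-a)\log(1-a)+a$, optimisation yields $\bbp\bigl(X\leq(1-a)\mu\bigr)\leq \exp(-\mu\psi(a))$, and the Taylor expansion $\psi(a)=\sum_{k\geq 2}a^k/(k(k-1))\geq a^2/2\geq a^2/3$ finishes the lower tail. Adding the two tail estimates yields the factor of $2$ in the claim.

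For the hypergeometric case, I would invoke Hoeffding's classical comparison: if $X$ counts successes in a sample of size $k$ drawn without replacement from an $n$-element set containing $m$ successes, and $Y\sim\calb(k, m/n)$, then $\bbe f(X)\leq \bbe f(Y)$ for every continuous convex $f\colon \mathbb{R}\to\mathbb{R}$; in particular this holds for $f(x)=e^{tx}$ for every $t\in\mathbb{R}$. Since $\bbe X=\bbe Y=km/n$, the Chernoff argument above transfers verbatim to $X$, giving the same tail bound. The main obstacle will be the calculus step verifying $\phi(a)\geq a^2/3$ on the whole interval $(0,3/2)$: the slightly better inequality $\phi(a)\geq a^2/(2+a)$ (which has a clean proof via $\log(1+a)\geq 2a/(2+a)$) degrades past $a=1$, so some care is needed near the right endpoint $a=3/2$, where the bound is essentially tight.
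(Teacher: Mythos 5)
The paper does not prove Theorem~\ref{t:exp} — it is a direct citation to Janson, \L{}uczak and Ruci\'nski (\emph{Random Graphs}, Corollary~2.3 and Theorem~2.10) — so there is no ``paper's proof'' to compare against; your task was effectively to reconstruct the result from scratch. Your reconstruction is correct and is essentially the proof given in the cited source: a Chernoff MGF bound for the binomial case, followed by Hoeffding's convex-order comparison $\bbe f(X) \leq \bbe f(Y)$ (with $Y$ the matching-mean binomial) applied to $f(x)=e^{tx}$ to transfer the bound to the hypergeometric case. A few small remarks on completeness. First, the calculus step you flag does go through: with $g(a)\deq (1+a)\log(1+a)-a-a^2/3$ one has $g(0)=g'(0)=0$ and $g''(a)=1/(1+a)-2/3$, so $g'$ increases on $(0,1/2)$ and decreases afterwards; since $g'(1)=\log 2 - 2/3>0$ and $g'(3/2)=\log(5/2)-1<0$, $g$ increases past $a=1$ and then decreases, and the endpoint check $g(3/2)=\tfrac{5}{2}\log\tfrac{5}{2}-\tfrac{9}{4}\approx 0.04>0$ closes the argument — the cited source instead proves the slightly sharper $\phi(a)\geq 3a^2/(6+2a)$ (valid for all $a>0$ and equal to $a^2/3$ exactly at $a=3/2$), which avoids the endpoint estimate; your weaker $a^2/(2+a)$ indeed fails past $a=1$, as you observe. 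Second, for the lower tail your Taylor-series argument for $\psi$ requires $a<1$; you should add the one-line observation that for $a\geq 1$ the lower-tail event $\{X\leq(1-a)\bbe X\}$ has probability $0$ (for $a>1$) or at most $\bbp(X=0)\leq e^{-\bbe X}\leq e^{-\bbe X/3}$ (for $a=1$), so that range is trivial. With those two points made explicit the argument is complete.
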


We also use an Azuma-type concentration result
for martingales due to \McDiarmid~\cite{mcdiarmid98:_concen},
in the form stated by~\Sudakov\ and~\Vondrak~\cite{sudakov10}.

\begin{lemma}\label{l:martingale}
Fix~$n\in\bbn$
and let~$X_1,\ldots,X_n$ be random variables taking values in~$[0,1]$
such that for each~$i\in[n]$ we have
\(\bbe(\,X_i\mid X_1,\ldots,X_\imm\,)\leq a_i\).
If~$\mu\geq\sum_{i=1}^n a_i$, then for every~$\delta$ with~$0<\delta<1$ we have
\[
\bbp\Bigl(\,\sum_{i=1}^n X_i > (1+\delta)\mu\,\Bigr)\leq \ee^{-\delta^2\mu/3}.
\]
\end{lemma}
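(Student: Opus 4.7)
My plan is to prove this by the exponential moment (Chernoff) method, adapted to the conditional-expectation setting via the tower property. Writing $S_n \deq \sum_{i=1}^n X_i$ and letting $\lambda > 0$ be a parameter to be optimised later, Markov's inequality applied to $e^{\lambda S_n}$ gives
\[
\bbp\bigl(S_n > (1+\delta)\mu\bigr)
\leq e^{-\lambda(1+\delta)\mu}\,\bbe\bigl[e^{\lambda S_n}\bigr],
\]
so the task reduces to bounding the moment generating function $\bbe[e^{\lambda S_n}]$.

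To do this I would exploit the convexity of $x\mapsto e^{\lambda x}$: the chord inequality $e^{\lambda x}\leq 1+(e^\lambda-1)x$ holds for all $x\in[0,1]$. Crucially, this bound is \emph{linear} in $x$, so taking conditional expectations preserves it, which together with the hypothesis $\bbe[X_i\mid X_1,\ldots,X_\imm]\leq a_i$ and the standard inequality $1+t\leq e^t$ yields
\[
\bbe\bigl[e^{\lambda X_i}\mid X_1,\ldots,X_\imm\bigr]
\leq 1+(e^\lambda-1)a_i \leq \exp\bigl((e^\lambda-1)a_i\bigr).
\]
Iterating this via the tower property gives $\bbe[e^{\lambda S_n}]\leq\exp\bigl((e^\lambda-1)\mu\bigr)$, so that for every $\lambda>0$,
\[
\bbp\bigl(S_n > (1+\delta)\mu\bigr)
\leq\exp\Bigl(\mu\bigl((e^\lambda-1)-\lambda(1+\delta)\bigr)\Bigr).
\]

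To finish, I would optimise by setting $\lambda\deq\log(1+\delta)$, which produces the familiar Chernoff bound $\exp(-\mu\,\phi(\delta))$ with $\phi(\delta)\deq(1+\delta)\log(1+\delta)-\delta$, and then invoke the elementary inequality $\phi(\delta)\geq\delta^2/3$ on $[0,1]$. The latter follows by observing that $g(\delta)\deq\phi(\delta)-\delta^2/3$ satisfies $g(0)=0$ and has derivative $g'(\delta)=\log(1+\delta)-2\delta/3$; since $g''$ vanishes only at $\delta=1/2$ and $g'(0)=0$ while $g'(1)=\log 2-2/3>0$, it follows that $g'\geq 0$ on $[0,1]$, hence $g\geq 0$ there.

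The only conceptual step, rather than any real technical obstacle, is the observation that the chord inequality $e^{\lambda x}\leq 1+(e^\lambda-1)x$ is linear in $x$ and therefore survives taking conditional expectations; this is exactly what lifts the classical independent-variable Chernoff argument to the present dependent-variable setting. Everything else is routine calculus, and indeed this is essentially the argument used by \McDiarmid~\cite{mcdiarmid98:_concen} in the original form of the result.
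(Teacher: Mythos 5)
Your proof is correct: the chord inequality $\ee^{\lambda x}\leq 1+(\ee^\lambda-1)x$ on $[0,1]$ is linear in $x$ and so passes through the conditional expectations, the tower-property iteration and the choice $\lambda=\log(1+\delta)$ give the bound $\exp\bigl(-\mu\bigl((1+\delta)\log(1+\delta)-\delta\bigr)\bigr)$, and your calculus check that $(1+\delta)\log(1+\delta)-\delta\geq\delta^2/3$ on $[0,1]$ (using $g'(0)=0$, $g'(1)=\log 2-2/3>0$ and the single sign change of $g''$ at $\delta=1/2$) is sound. The paper gives no proof of this lemma, citing it from \McDiarmid\ in the form stated by \Sudakov\ and \Vondrak, and your argument is precisely the standard Chernoff-type derivation underlying those references, so it serves as a correct self-contained proof.
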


\section{Allocating and embedding}\label{s:alloc-embed}

In this section we show how to obtain somewhat sharper estimates from the random allocation and 
embedding algorithms used by \Kuehn, \Mycroft\ and~\Osthus~\cite{KMO11:sumner_approximate} to embed 
oriented trees in slightly larger tournaments. We begin with the following lemma, which is a 
slightly modified version of~\cite[Lemma~2.10]{KMO11:sumner_approximate}.

\begin{lemma}\label{l:F_i-stren}
  For every~$C>0$ there exists~$n_0$ such that for every rooted tree~$T$ on~$n \geq n_0$ vertices 
  with~$\Delta(T)\leq (\log n)^C$ and root~$r$, there exist~$s \in \bbn$, pairwise-disjoint 
  subsets~$F_1\ldots,F_s \subseteq V(T)$,
  and not-necessarily-distinct vertices~$v_1,\ldots,v_s$ of~$T$ with the following properties.
  \begin{enumerate}[label=(\arabic*)]
  \item \label{i:3.1i} $\bigl|\bigcup_{i\in[s]} F_i\bigr|\geq n-n^{5/12}$.\label{i:improved}
  \item \label{i:3.1ii} $|F_i|\leq n^{2/3}$ for each~$i\in[s]$.
  \item \label{i:3.1iii} For any~$i\in[s]$, any~$x\in\{r\}\cup \bigcup_{j<i}F_j$, and any~$y\in F_i$, 
  the path from~$x$ to~$y$ in~$T$ includes~$v_i$.
  \item \label{i:3.1iv} For any~$i \in [s]$ and~$y\in F_i$ we have~$\dist_T(v_i,y)\geq {(\log \log n)^3}$.
  \end{enumerate}
\end{lemma}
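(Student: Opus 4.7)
The plan is to construct the pair~$(v_i, F_i)$ iteratively, adapting the approach of Lemma~2.10 in~\cite{KMO11:sumner_approximate} with sharper accounting to yield the improved bound in condition~\ref{i:3.1i}. Let~$R_1 \deq V(T)$; at step~$i$, if~$|R_i|\leq n^{5/12}$ stop, otherwise choose~$v_i\in V(T)$ together with a component~$C_i$ of~$T - v_i$ disjoint from the processed set~$S_i \deq \{r\}\cup\bigcup_{j<i}F_j$, and define
\[
F_i \deq \bigl\{\,y\in C_i\,:\,\dist_T(v_i,y)\geq (\log\log n)^3\,\bigr\}.
\]
I will choose~$v_i$ so that~$n^{2/3}/(\log n)^{C+1}\leq |F_i|\leq n^{2/3}$ (see the main obstacle below), and then update $R_{i+1}\deq R_i\setminus F_i$.

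Conditions~\ref{i:3.1ii} and~\ref{i:3.1iv} are immediate from this definition. For~\ref{i:3.1iii}, since~$F_i\subseteq C_i$ and~$S_i$ lies entirely in the union of the \emph{other} components of~$T - v_i$, any path in~$T$ joining some~$x\in S_i$ to some~$y\in F_i$ must cross the cut vertex~$v_i$. For~\ref{i:3.1i}, the lower bound~$|F_i|\geq n^{2/3}/(\log n)^{C+1}$ implies that the procedure terminates after at most~$s\leq n^{1/3}(\log n)^{C+1}=n^{1/3+o(1)}$ iterations, and so at termination~$|R_s|\leq n^{5/12}$, as required.

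The main obstacle is the selection step: I must show that such a pair~$(v_i, C_i)$ exists whenever~$|R_i|>n^{5/12}$. The idea is a heavy-path descent. Begin at a ``boundary'' vertex of~$R_i$ (a vertex of~$T$ in~$R_i$ adjacent to a vertex of~$S_i$, or at~$r$ itself if~$i=1$) and descend step by step, at each current vertex~$v$ moving into the neighbouring component of~$T-v$ which lies entirely in~$R_i$ and contains the most vertices of~$R_i$. Halt at the first vertex~$v_i$ where the size of this ``fresh'' component drops below~$n^{2/3}$. The bound~$\Delta(T)\leq(\log n)^C$ ensures that this drop can be by at most a factor of~$(\log n)^C$ in a single step, so the component~$C_i$ just produced has size at least~$n^{2/3}/(\log n)^{C+1}$. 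The ball of radius~$(\log\log n)^3$ around~$v_i$ in~$T$ contains at most~$\Delta(T)^{(\log\log n)^3}\leq (\log n)^{C(\log\log n)^3}=n^{o(1)}$ vertices, so the trimming in the definition of~$F_i$ removes only~$n^{o(1)}$ vertices and preserves~$|F_i|\geq \tfrac12\cdot n^{2/3}/(\log n)^{C+1}$. The slack in the stopping threshold~$n^{5/12}$ (which comfortably exceeds~$n^{1/3+o(1)}\cdot n^{o(1)}$) then absorbs the cumulative loss from the ball trimming across all iterations together with the possibly smaller final chunk remaining when the descent can no longer locate a fresh component of size~$\geq n^{2/3}/(\log n)^{C+1}$.
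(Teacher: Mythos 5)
Your handling of conditions (2)--(4) is fine, but the accounting for condition (1) has a genuine gap: you never justify that, when no admissible pair~$(v_i,C_i)$ exists, the remaining set~$R_i$ is small, and for your procedure this is simply false. The problem is the order in which you extract chunks. Take~$T$ to be a path~$u_1u_2\cdots u_n$ rooted at~$r=u_1$ (so~$\Delta(T)=2$). In the first iteration your heavy-path descent walks away from~$r$ until the component below has size just under~$n^{2/3}$, so~$F_1$ is essentially the far end of the path. But now every nonempty component of~$T-v$, for every vertex~$v$, contains one of the two endpoints of the path, hence contains either~$r$ or a vertex of~$F_1$; so no component disjoint from~$S_2=\{r\}\cup F_1$ exists at all --- and this disjointness is exactly what your verification of condition (3) relies on. The procedure therefore halts after one step with~$|R_2|\geq n-n^{2/3}$ vertices uncovered, far above the budget~$n^{5/12}$. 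The last sentence of your proposal, which asserts that the ``possibly smaller final chunk'' is absorbed by the~$n^{5/12}$ slack, is precisely the missing step: for your greedy far-first extraction there is no bound on that chunk.

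The underlying issue is that the on-line constraint ``$C_i$ disjoint from~$r$ and from all earlier~$F_j$'' is incompatible with always extracting the deepest large component first: the near side of the tree becomes sandwiched between~$r$ and the already-extracted sets and is permanently inaccessible. The adaptation of~\cite[Lemma~2.10]{KMO11:sumner_approximate} that the paper invokes avoids this by controlling the extraction order relative to the root: either one works root-outwards, so that~$r$ and all earlier~$F_j$ always lie on the root side of the new cut vertex~$v_i$ (on a path, the correct decomposition is into consecutive segments of length about~$n^{2/3}$ taken from the root outwards, separated by gaps of length~$(\log\log n)^3$), or one repeatedly removes an entire current descendant-subtree at a deepest admissible vertex --- including the trimmed ball, so that nothing extracted or discarded ever lies below a later cut vertex --- and then reverses the indexing at the end so that (3) holds. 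Note also that the~$F_i$ need not be single components of~$T-v_i$; grouping child subtrees is what lets one keep both~$|F_i|\leq n^{2/3}$ and a lower bound of order~$n^{2/3}$ per step, so that~$s=O(n^{1/3})$ and the total trimming loss~$s\cdot\Delta(T)^{(\log\log n)^3}=n^{1/3+\littleo(1)}$ fits under~$n^{5/12}$, which is the ``crucial point'' the paper highlights. A secondary issue: your descent may halt at its very first vertex, where the largest admissible component can be far smaller than~$n^{2/3}/(\log n)^{C+1}$, so even the per-step lower bound on~$|F_i|$ is not established in general.
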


The original version of this lemma had constants~$\Delta, \eps, k > 0$ rather than~$C > 0$, 
assumed additionally that~$\Delta(T) \leq \Delta$, had~$n-\eps n$ in place of~$n-n^{5/12}$ in~\ref{i:3.1i} 
and had~$k$ in place of~$\lln$ in~\ref{i:3.1iv}. 
However, the form of the lemma given above can be established 
by an essentially identical proof, replacing each instance of~$k$ by~$\lln$ and each instance 
of~$\Delta$ by~$(\log n)^C$. The crucial point is that we then replace 
the bound~$3 n^{1/3}\Delta^{k^3} \leq \eps n$ by the bound $3n^{1/3}(\log n)^{C{(\log \log n)^3}} \leq n^{5/12}$. 
These changes yield~\ref{i:3.1i} and~\ref{i:3.1iv} above, 
whilst~\ref{i:3.1ii} and~\ref{i:3.1iii} are unchanged.

We now consider the random allocation algorithm of \Kuehn, \Mycroft\ 
and~\Osthus~\cite[Vertex Allocation Algorithm]{KMO11:sumner_approximate}, 
which is presented below as Algorithm~\ref{a:alloc}. Given a rooted oriented 
tree~$T$ and a~\CCT~$G$ with clusters~$V_1, V_2, \dots, V_k$, this assigns each vertex 
of~$T$ to a cluster of~$G$. 
We allocate vertices of~$T$ one at a~time in an~ancestral ordering. 
This ensures that whenever we allocate a vertex~$x$ other than the root, 
the parent~$p$ of~$x$ has previously been embedded to some cluster~$V_i$. 
We then say that~$x$ is allocated \defi{canonically} if either~$p\rarr x\in E(T)$ and~$x$ is allocated 
to the cluster~$V_\ipp$, or~$p \larr x\in E(T)$ and~$x$ is allocated to the cluster~$V_\imm$. 
Moreover, we say that an allocation of the vertices of~$T$ to the clusters of~$G$ is \defi{semi-canonical} 
if every vertex of~$T$ is either allocated canonically or allocated to the same cluster as its parent, every 
vertex adjacent to the root of~$T$ is allocated canonically, and for each~$\ink$ the set~$U_i$ of vertices 
allocated to~$V_i$ induces a forest~$F = T[U_i]$ in which no connected component has more than~$\Delta(T)$ 
vertices. 

\smallskip
\begin{algorithm}[H]
\SetAlgoVlined
\caption{The Vertex Allocation Algorithm~\cite{KMO11:sumner_approximate}}\label{a:alloc}
\DontPrintSemicolon
\SetKwInOut{Input}{Input}
\Input{an oriented tree~$T$ on~$n$ vertices, a root vertex~$t_1$ of~$T$, and clusters~$V_1,\ldots,V_k$.}
Choose an ancestral ordering~$t_1,\ldots,t_n$ of~$V(T)$.\\
\For{$\tau = 1$ \KwTo $n$}{%
  \lIf{$\tau=1$}{allocate~$t_1$ to~$V_1$.}
  \Else{%
    Let~$t_\sigma$ be the parent of~$t_\tau$.\;
    \lIf{$\dist_T(t_\tau,t_1)$ is odd}{allocate~$t_\tau$ canonically.}
    \lElse{%
    Allocate~$t_\tau$ to the same cluster
    as~$t_\sigma$ with probability~$1/2$ and\newline
    allocate~$t_\tau$ canonically with probability~$1/2$,
    independently of all previous choices.}
  }
}
\end{algorithm}
\smallskip

The following lemma, a slightly modified version of~\cite[Lemma~3.3]{KMO11:sumner_approximate}, 
states that Algorithm~\ref{a:alloc} will always return a semi-canonical allocation, and moreover 
that if~$T$ is sufficiently large then the allocation of vertices to clusters will be approximately uniform.

\begin{lemma}\label{l:3.3-stren}
  Let~$T$ be an oriented tree on~$n$ vertices rooted at~$r$.
  If we allocate the vertices of~$T$ to~clusters~$V_1,\ldots,V_k$
  by~applying the Vertex Allocation Algorithm,
  then the following properties hold.
  \begin{enumerate}[label=(\alph*)]
  \item\label{l:3.3-stren-i} 
    The allocation obtained will be semi-canonical.
  \item \label{l:3.3-stren-ii}
    Let~$u$ and~$v$ be distinct vertices of~$T$ such that~$u$ lies on the path from~$r$ to~$v$,
    let~$P$ be the path between~$u$ and~$v$,
    and let~$E \subseteq V(T)$ consist of all vertices~$x \in V(P) \sm \{u\}$
    for which~$\dist(r, x)$ is even. 
    If we condition on the event that~$u$ is allocated to some cluster~$V_j$,
    then~$v$ is allocated to cluster~$V_{j + R + F}$
    (taking addition in the subscript modulo~$k$) where~$R \deq\calb\bigl(|E|,\frac{1}{2}\bigr)$
    and~$F$ is a deterministic variable depending only on~$\dist(r, u)$
    and the orientations of edges of~$P$
    (that is,~$F$ is unaffected by the random choices made by the Vertex Allocation Algorithm).
  \item\label{l:3.3-stren-iii}
    Suppose that~$1/n \ll 1/k$. 
    Let~$u$ and~$v$ be vertices of~$T$ 
    such that~$u$ lies on the path from~$r$ to~$v$, 
    and~$\dist_T(u,v)\geq {(\log \log n)^3}$. 
    Then for any~$i,j\in[k]$,
    \[
    \bbp(\,\text{$v$ is allocated to~$V_i$} \mid \text{$u$ is allocated to~$V_j$}\,)
      =\frac{1}{k}\left(1 \pm \frac{1}{4 \lln}\right).
    \]
  \item\label{l:3.3-stren-iv}
    Suppose that~$1/n\ll 1/k,\alpha,1/C$ and
    that~$\Delta(T)\leq (\log n)^C$. Let~$S$ be a~subset of~$V(T)$ 
    with at least~$\alpha n$ vertices.
    Then with probability~$1-\littleo(1)$
    each of the~$k$~clusters~$V_i$ 
    has~$|S|\bigl(\frac{1}{k} \pm \frac{1}{\lln}\bigr)$ vertices of~$S$ allocated to it.
  \end{enumerate}
\end{lemma}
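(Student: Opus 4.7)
The four parts proceed as follows.

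\textbf{Part \ref{l:3.3-stren-i}} follows by direct inspection of the algorithm. Every non-root vertex is allocated either canonically or to its parent's cluster; a vertex adjacent to the root lies at odd distance~$1$ and so is allocated canonically. For the component bound, let $C$ be a component of $T[U_i]$ and let $t$ be its vertex closest to~$r$. Any children of $t$ in~$C$ must have been allocated non-canonically (to $t$'s cluster~$V_i$), which forces their distance from~$r$ to be even and hence $\dist(r,t)$ to be odd; in turn the children of those children lie at odd distance, are allocated canonically, and so end up in $V_{i\pm 1}\neq V_i$. The case $t=r$ is handled similarly ($r$'s children are canonical, landing in $V_2$ or $V_k$). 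Hence $C$ is a star centred at~$t$, giving $|C|\leq\Delta(T)$.

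\textbf{Part \ref{l:3.3-stren-ii}} is obtained by tracking the shift in cluster index along the path~$P$ from~$u$ to~$v$. For each edge~$e$ of~$P$ with child endpoint~$c_e$, let $\sigma_e\in\{\pm 1\}$ encode the canonical shift determined by~$e$'s orientation in~$T$. Edges with $c_e\notin E$ contribute the deterministic shift~$\sigma_e$, while those with $c_e\in E$ contribute $\sigma_e B_e$ for independent $B_e\sim\mathrm{Bern}(1/2)$. Since $\sigma_e B_e$ equals either $B_e$ (when $\sigma_e=+1$) or $(1-B_e)-1$ (when $\sigma_e=-1$), collecting the Bernoulli summands gives a total random contribution distributed as $R-n_-$ with $R\sim\mathrm{Bin}(|E|,1/2)$ and $n_-=|\{e\in P:c_e\in E,\,\sigma_e=-1\}|$; adding the deterministic contributions from $c_e\notin E$ yields the claimed form. \textbf{Part \ref{l:3.3-stren-iii}} is then immediate: since parity alternates along~$P$, $|E|\geq(\dist_T(u,v)-1)/2\geq(\log\log n)^3/3$, so Lemma~\ref{l:bin-modulo} gives $\bbp(R\equiv r'\pmod k)=\tfrac{1}{k}\pm\tfrac{4}{\sqrt{|E|}}$, and the additive error is at most $\tfrac{1}{4k\log\log n}$ once $n$ is large enough relative to~$k$.

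\textbf{Part \ref{l:3.3-stren-iv}}, the crux, is carried out in three steps. First apply Lemma~\ref{l:F_i-stren} to~$T$ with the same constant~$C$, obtaining pairwise-disjoint blocks $F_1,\ldots,F_s$ and gateways $v_1,\ldots,v_s$ satisfying \ref{i:3.1i}--\ref{i:3.1iv}; set $S_i:=S\cap F_i$ and, for a fixed cluster~$V_l$, $N_{i,l}:=|\{y\in S_i:y\in V_l\}|$, so that the count of $S$-vertices allocated to $V_l$ equals $\sum_i N_{i,l}\pm n^{5/12}$ by~\ref{i:3.1i}. Second, use~\ref{i:3.1iii} (which forces $v_i$ to be an ancestor of every vertex of~$F_i$, so $v_i$'s allocation is determined before we reveal any of~$F_i$ in the ancestral order) together with~\ref{i:3.1iv} and part~\ref{l:3.3-stren-iii} to conclude that, uniformly over $v_i$'s allocation, $\bbp(y\in V_l\mid v_i)=\tfrac{1}{k}\bigl(1\pm\tfrac{1}{4\log\log n}\bigr)$ for every $y\in F_i$. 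Third, rescale using~\ref{i:3.1ii}: the variable $X_i:=N_{i,l}/n^{2/3}$ lies in $[0,1]$ and, by the tower property, satisfies $\bbe[X_i\mid X_1,\ldots,X_{i-1}]\leq\tfrac{|S_i|}{kn^{2/3}}\bigl(1+\tfrac{1}{4\log\log n}\bigr)=:a_i$. Then $\mu:=\sum_i a_i\leq\tfrac{|S|}{kn^{2/3}}\bigl(1+\tfrac{1}{4\log\log n}\bigr)\geq\tfrac{\alpha n^{1/3}}{2k}$, and Lemma~\ref{l:martingale} with $\delta=1/(2\log\log n)$ yields
\[
\bbp\!\left(\sum_i X_i>(1+\delta)\mu\right)\leq\exp\!\bigl(-\Omega\bigl(\alpha n^{1/3}/(k(\log\log n)^2)\bigr)\bigr)=o(1/k).
\]
This translates to $\sum_i N_{i,l}\leq\bigl(1+\tfrac{1}{\log\log n}\bigr)|S|/k$ with probability $1-o(1/k)$. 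A union bound over $l\in[k]$ gives the upper tail for every cluster simultaneously; the matching lower bound follows from the identity $\sum_l(\sum_i N_{i,l})=|S|-O(n^{5/12})$ by pigeonhole, and the residual term $O(n^{5/12})$ is negligible compared to $|S|/\log\log n$ since $|S|\geq\alpha n$.

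\textbf{Main obstacle.} The delicate point is the concentration in part~\ref{l:3.3-stren-iv}: a naive Doob martingale in ancestral order would suffer from bounded differences as large as $\Theta(n)$ (a single random coin flipped at a vertex near~$r$ shifts all its descendants). The partition provided by Lemma~\ref{l:F_i-stren} circumvents this: the strengthened distance bound $\dist_T(v_i,y)\geq(\log\log n)^3$ in~\ref{i:3.1iv} is what drives the $O(1/\log\log n)$ conditional error in part~\ref{l:3.3-stren-iii}, while the block-size bound $|F_i|\leq n^{2/3}$ in~\ref{i:3.1ii} is exactly what is needed to rescale the block counts into $[0,1]$-valued random variables and apply Lemma~\ref{l:martingale}.
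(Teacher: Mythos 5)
Your proof is correct and takes essentially the same route as the paper: parts~\ref{l:3.3-stren-i} and~\ref{l:3.3-stren-ii} are inherited from~\cite{KMO11:sumner_approximate} (which you reproduce accurately), part~\ref{l:3.3-stren-iii} reduces to Lemma~\ref{l:bin-modulo} exactly as in the paper, and part~\ref{l:3.3-stren-iv} uses the same three ingredients --- the block decomposition of Lemma~\ref{l:F_i-stren}, the per-block conditional expectation bound obtained from part~\ref{l:3.3-stren-iii}, and the Azuma-type martingale inequality Lemma~\ref{l:martingale} --- followed by the pigeonhole step for the lower tail. The one place you are terser than the paper is the claim $\bbe[X_i\mid X_1,\dots,X_{i-1},\,v_i\in V_q]=\bbe[X_i\mid v_i\in V_q]$, where ``by the tower property'' elides the Markov-style independence argument (that, given $v_i$'s cluster, the allocation of any $y\in F_i$ depends only on coins flipped strictly below $v_i$, which by property~\ref{i:3.1iii} of Lemma~\ref{l:F_i-stren} cannot affect $F_1,\dots,F_{i-1}$); this is worth spelling out but does not change the argument.
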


The statement above differs from the original version of the lemma in the following ways. 
Firstly,~\ref{l:3.3-stren-ii} was not stated explicitly, 
but was established in the original proof. Secondly, the original version of~\ref{l:3.3-stren-iii} instead
had constants~$1/k \ll \delta$, assumed that~$\dist_T(u,v)\geq k^3$ instead 
of~$\dist_T(u,v)\geq {(\log \log n)^3}$, and had~$\delta$ in place of~$\frac{1}{\lln}$ in the displayed equation.
Finally, the original version of~\ref{l:3.3-stren-iv} had constants~$1/n \ll 1/\Delta, 1/k \ll \delta$, 
assumed instead that~$\Delta(T) \leq \Delta$, had~$\delta$ in place of~$\frac{1}{\lln}$, was only 
stated for the special case~$S = V(T)$, and only provided an upper bound on the number of vertices allocated to each cluster.
So our version of the lemma allows the bounds in~\ref{l:3.3-stren-iii} and~\ref{l:3.3-stren-iv} 
to decrease with~$n$, and~$\Delta(T)$ to grow with~$n$, rather than being fixed constants. 
We now show how the original proof can be modified to establish our altered versions 
of~\ref{l:3.3-stren-iii} and~\ref{l:3.3-stren-iv}. 

\begin{proof}
To prove~\ref{l:3.3-stren-iii}, let~$\ell \deq \dist_T(u, v)$,
and define~$E$ as in~\ref{l:3.3-stren-ii}, so~$|E| = \lfloor \frac{\ell}{2} \rfloor$ 
or~$|E| = \lceil \frac{\ell}{2} \rceil$.
By~\ref{l:3.3-stren-ii} it suffices to show 
that $\bbp\bigl(\,\calb\bigl(|E|,\frac{1}{2}\bigr) = r \mod k\,\bigr) = \frac{1}{k} \pm \frac{1}{4k\lln}$ 
for each~$r \in [k]$, and since~$|E| \geq \frac{1}{3} {(\log \log n)^3}$ this holds by Lemma~\ref{l:bin-modulo}.

We now prove \ref{l:3.3-stren-iv}.
By Lemma~\ref{l:F_i-stren}, there exist an integer~$s \leq 3n^{1/3}$,
vertices~$v_1,\ldots,v_s\in V(T)$
and pairwise-disjoint subsets~$F_1,\ldots,F_s$ of~$V(T)$
such that~$|\bigcup_{i=1}^s F_i|\geq n-n^{5/12}$
and~$|F_i|\leq n^{2/3}$ for each~$\ink$, such that if~$j<i$,
then any path from~$r$ or any vertex of~$F_j$
to any vertex of~$F_i$ passes through the vertex~$v_i$,
and also such that~$\dist(v_i,F_i)\geq {(\log \log n)^3}$. Write~$\delta \deq \frac{1}{\lln}$; we shall prove that 
\[
(\dagger)\quad
\parbox{.7\textwidth}{with probability~$1-\littleo(1)$, for any~$j \in [k]$
the total number of vertices from~$\bigcup_{i\in[s]} F_i\cap S$
allocated to cluster~$V_j$ is at~most~$|S|(\frac{1}{k} + \frac{\delta}{2k})$.}
\]
Note that ($\dagger$)~implies~\ref{l:3.3-stren-iv}.
Indeed, since the number of vertices of~$T$
not contained in any of the sets~$F_i$
is at most~$n^{5/12} \leq \alpha n/2k\lln \leq \delta |S|/2k$, if ($\dagger$) holds
then for any~$\jnk$ in total at most  
$|S|(1+\delta)/k$~vertices 
of~$S$ are allocated to~$V_j$. It follows that at least 
$|S|-(k-1)|S|(1 +\delta)/k \geq |S|(1/k - \delta)$~vertices of~$S$
are allocated to~$V_j$, so~\ref{l:3.3-stren-iv} holds.

To prove~($\dagger$),
define random variables~$X_i^j$ for each~$i\in[s]$ and~$j\in[k]$ by
\[
X_i^j\deq\frac{\text{\# of vertices of~$F_i\cap S$ allocated to cluster~$V_j$}}{n^{2/3}},
\]
so each~$X_i^j$ lies in the range~$[0,1]$.
Then since the cluster to which a vertex~$x$ of~$T$ is allocated
is dependent only on the cluster to which the parent of~$x$ is allocated
and on the outcome of the random choice made when allocating~$x$,
we have for each~$q \in [k]$ 
that~$\bbe(\,X_i^j \mid X_\imm^j,\ldots,X_1^j, v_i\in V_q\,)=\bbe(\,X_i^j\mid v_i\in V_q\,)$, 
where we write~$x \in V_q$ to denote the event that~$x$ is allocated
to~$V_q$. So for any~$i \in [s]$ and~$\jnk$ we have
\begin{align*}
\bbe(\,X_i^j \mid X_\imm^j,\ldots,X_1^j\,)
& \leq
  \max_{q\in[k]} \bbe(\,X_i^j \mid X_\imm^j,\ldots,X_1^j, v_i\in V_q\,)
 =
  \max_{q\in[k]} \bbe(\,X_i^j \mid v_i\in V_q\,) \\
& =
  \max_{q\in[k]} \frac{\sum_{x\in F_i\cap S} \bbp(\,x\in V_j\mid v_i\in V_q\,)}{n^{2/3}}
\leq \left(\frac{1}{k}+\frac{\delta}{4k}\right)\frac{|F_i\cap S|}{n^{2/3}},
\end{align*}
using~\ref{l:3.3-stren-iii}.
We apply Lemma~\ref{l:martingale} with
\[
\mu
\deq \left(\frac{1}{k}+\frac{\delta}{4k}\right) \frac{|S|}{n^{2/3}}
\geq \left(\frac{1}{k}+\frac{\delta}{4k}\right) \sum_{i\in[s]}
\frac{|F_i\cap S|}{n^{2/3}},
\]
to obtain
\[
\bbp\Bigl(\sum_{i\in[s]} X_i^j > (1+\delta/8)\mu\Bigr)
\leq \exp\left(\frac{-(\delta/8)^2\mu}{3}\right)
= \exp \left(-\frac{\delta^2(1+\delta/4) |S|}{192kn^{2/3}}\right)
 \leq \exp\bigl(-n^{1/4}\bigr)
\]
where the~second inequality holds since we assumed that~$1/n\ll 1/k,\alpha$. Taking a union bound,
we find that with probability~$1-\littleo(1)$ we have for each~\jnk~that
\begin{align*}
n^{2/3}\sum_{i\in[s]} X_i^j
  \leq n^{2/3}(1+\delta/8)\mu  
  \leq |S|\left(\frac{1}{k} + \frac{\delta}{2k}\right).
\end{align*}
In other words, for each~\jnk\ there are 
at~most $|S|\bigl(\frac{1}{k}+ \frac{\delta}{2k}\bigr)$~vertices
from~$\bigcup_{i=1}^r F_i\cap S$ allocated to~$V_j$, so~($\dagger$) holds.
\end{proof}

Having applied the random allocation algorithm to allocate the vertices 
of an oriented tree~$T$ to the clusters of a slightly larger \CCT~$G$,
\Kuehn, \Mycroft\ and \Osthus\ proceeded to embed~$T$ in~$G$ using a
vertex embedding algorithm which successively embedded vertices of~$T$
in~$G$ following an ancestral ordering of the vertices of~$T$, with
each vertex being embedded in the cluster to which is was
allocated. Studying this algorithm yields the following lemma,
which is a modified form of~\cite[Lemma~3.4]{KMO11:sumner_approximate}.

\begin{lemma}
  \label{l:approx-embedding}
  Suppose that~$1/n \ll 1/C$ and that
  \(
  1/n \ll 1/k\ll \eps\ll\gamma\ll d\ll \alpha,
  \)
  and let~$m\deq n/k$.
  \begin{enumerate}[label=(\arabic*)]
  \item
    Let~$T$ be an oriented tree on at most~$n$ vertices with 
    root~$r$ and~$\Delta(T)\leq (\log n)^C$.
  \item \label{l:app-emb-b}
    Let~$G$ be a~$(d,\eps)$-regular \CCT\
    on clusters~$V_1,\ldots,V_k$, each of size at least~$(1+\alpha)m$ and at most~$3m$,
    and let~\vstar\ be a vertex of~$V_1$ with
    at least~$\gamma m$ inneighbours in~$V_k$ and
    at least~$\gamma m$ outneighbours in~$V_2$.
  \item
    Let the vertices of~$T$ be allocated to the clusters~$V_1,\ldots,V_k$
    so that at most~$(1+\alpha/2)m$ vertices 
    are allocated to each cluster~$V_i$,
    and so that the allocation is semi-canonical.
  \end{enumerate}
  Then~$G$ contains a copy of~$T$ in which~$r$ is embedded to~\vstar,
  and such that each vertex is~embedded
  in the cluster to which it was allocated.
\end{lemma}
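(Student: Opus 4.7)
The plan is to adapt the Vertex Embedding Algorithm of \Kuehn, \Mycroft\ and \Osthus~\cite{KMO11:sumner_approximate}: embed the vertices of $T$ into $G$ one at a time following an ancestral ordering $t_1 = r, t_2, \dots$, starting with $\phi(r) \deq \vstar$. When processing $t_\tau$, its parent $t_\sigma$ has already been embedded to some $\phi(t_\sigma) \in V_i$, and we choose $\phi(t_\tau)$ to be an unused vertex of the cluster $V_j$ to which $t_\tau$ was allocated, such that the edge between $\phi(t_\sigma)$ and $\phi(t_\tau)$ exists in $G$ and is oriented to match the edge between $t_\sigma$ and $t_\tau$ in $T$. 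By the semi-canonical allocation property, either $j = i$ (and we need an appropriately-oriented tournament edge within $G[V_i]$) or $j = i \pm 1$ (and we need a cross-cluster edge in the forward direction of the cycle of cluster tournaments).

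The key invariant to maintain throughout the embedding is that every embedded vertex $\phi(t_\tau)$ is \emph{typical} for its cluster, in the sense that it has close to $(d - O(\eps))|V_{j \pm 1}|$ forward in- and outneighbours in the two adjacent clusters. By $(\dplus, \eps)$-regularity and the Slicing Lemma, only an $O(\eps)$-fraction of each cluster fails to be typical. When choosing $\phi(t_\tau)$ I restrict to typical candidates so that the invariant is preserved for future embedding steps. In the cross-cluster case, regularity of $G[V_i \rarr V_{i \pm 1}]$ provides at least $(d - \eps)|V_{i \pm 1}|$ forward neighbours of $\phi(t_\sigma)$; in the same-cluster case, Lemma~\ref{l:semidegree} applied to the tournament $G[V_i]$ supplies many candidates with the required orientation. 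The hypothesis that $\vstar$ has at least $\gamma m$ inneighbours in $V_k$ and outneighbours in $V_2$ establishes the invariant at the root.

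The main counting step, at each stage of the algorithm, is to show that after subtracting already-used vertices (at most $(1 + \alpha/2)m - 1$ in each $V_j$) and atypical vertices ($O(\eps)|V_j|$ in each $V_j$) from the set of suitable oriented neighbours of $\phi(t_\sigma)$, at least one valid candidate for $\phi(t_\tau)$ remains. With the constants satisfying $\eps \ll \gamma \ll d \ll \alpha$ and $|V_j| \geq (1+\alpha)m$, giving us spare capacity at least $\alpha m/2$ per cluster, this is where the argument is delicate. The main obstacle, and the point where the semi-canonical allocation is essential, is to bound the number of same-cluster embedding steps at any single cluster $V_i$: here the definition of semi-canonical guarantees that the subgraph of $T$ induced on the vertices allocated to $V_i$ is a forest in which each component has at most $\Delta(T) \leq (\log n)^C$ vertices, which is negligible compared to $\alpha m/2 = \Omega(n/k)$. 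This ensures that the cumulative demand on $G[V_i]$ from same-cluster embeddings remains small relative to the cross-cluster demand, and the counting bounds go through essentially as in the original proof of \Kuehn, \Mycroft\ and \Osthus.
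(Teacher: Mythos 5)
Your proposal takes the same overall route as the paper: the paper's ``proof'' of this lemma is simply to observe that the vertex-embedding algorithm and proof of~\cite[Lemma~3.4]{KMO11:sumner_approximate} go through verbatim once~$\Delta \leq (\log n)^C$ replaces a constant bound and once cluster sizes are allowed to vary within the stated range, and you are (correctly) sketching that KMO argument. Two points in your last paragraph are off, however. First, the claim that ``the cumulative demand on~$G[V_i]$ from same-cluster embeddings remains small relative to the cross-cluster demand'' is false: the Vertex Allocation Algorithm allocates each non-fixed vertex to its parent's cluster with probability~$1/2$, so same-cluster placements account for a constant fraction of all vertices in a cluster, not a negligible one. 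The actual role of the component-size bound in the definition of semi-canonical allocation is local rather than aggregate: each maximal excursion into~$V_i$ (a component of~$T[U_i]$) has at most~$\Delta(T) \le (\log n)^C$ vertices, so that once the entry vertex of the component is embedded to a suitable vertex of~$V_i$ one can complete that component greedily inside the tournament~$G[V_i]$ using only~$O((\log n)^C)$ further vertices, which is tiny compared to the~$\alpha m/2$ vertices of~$V_i$ that always remain unused. Second, your stated invariant (typicality with respect to the two adjacent clusters) handles only the cross-cluster case; for same-cluster children you additionally need the embedded vertex to retain large semidegree \emph{within} its own cluster, which is what Lemma~\ref{l:semidegree} is used to guarantee when choosing images. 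These are gaps in the write-up rather than in the strategy, and with them repaired your argument matches the paper's.
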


The differences between Lemma~\ref{l:approx-embedding} as stated above 
and the original version in~\cite{KMO11:sumner_approximate} are twofold. 
Firstly, the original assumption that~$\Delta(T) \leq \Delta$ for some 
(fixed)~$\Delta$ with~$1/n \ll 1/\Delta \ll \eps$ has been replaced by our 
assumption that~$\Delta(T) \leq (\log n)^C$. Secondly, we allow the cluster sizes to 
vary between the bounds in~\ref{l:app-emb-b}, whereas the original form insisted that all
clusters have size exactly~$(1+\alpha)n$. Neither of these changes materially affects the 
original proof given in~\cite{KMO11:sumner_approximate}.

Combining Lemma~\ref{l:3.3-stren} and Lemma~\ref{l:approx-embedding} 
immediately yields the following corollary, a modified version 
of~\cite[Lemma~3.2]{KMO11:sumner_approximate}, in which the original constant 
bound on~$\Delta(T)$ has been replaced by a polylogarithmic bound.

\begin{corollary}
  \label{c:cctcombined}
  Suppose that~$1/n \ll 1/C$ and that $1/n \ll 1/k \ll \eps \ll d\ll \alpha \leq 2$,
  and let~$m\deq n/k$. Let~$T$ be an oriented tree on at 
  most~$n$ vertices with~$\Delta(T)\leq (\log n)^C$ and with root~$r$. 
  Also let~$G$ be a~$(d,\eps)$-regular \CCT\
  on clusters~$V_1,\ldots,V_k$, each of size~$(1+\alpha)m$, 
  and let~$v$ be a vertex of~$V_1$ with at least~$d^2 m$ inneighbours in~$V_k$ 
  and at least~$d^2 m$ outneighbours in~$V_2$.
  Then~$G$ contains a copy of~$T$ in which~$r$ is embedded to~$v$.
\end{corollary}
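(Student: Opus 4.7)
The proof is a direct combination of Lemma~\ref{l:3.3-stren} and Lemma~\ref{l:approx-embedding}. First I would apply the Vertex Allocation Algorithm (Algorithm~\ref{a:alloc}) to the tree $T$ rooted at $r$, using the clusters $V_1, \ldots, V_k$ of $G$, to obtain an allocation of $V(T)$ to the clusters. By Lemma~\ref{l:3.3-stren}\ref{l:3.3-stren-i} this allocation is semi-canonical, regardless of the outcome of the random choices.

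Next I would invoke Lemma~\ref{l:3.3-stren}\ref{l:3.3-stren-iv} with $S \deq V(T)$ to control the number of vertices allocated to each cluster. Provided $|V(T)|$ is at least a constant fraction of $n$ (and if not, the required cluster-size bound is immediate since then $|V(T)| \leq (1+\alpha/2)m$ anyway), part~\ref{l:3.3-stren-iv} implies that with probability $1 - \littleo(1)$ each cluster $V_i$ contains at~most
\[
|V(T)|\left(\tfrac{1}{k} + \tfrac{1}{k\lln}\right) \leq m\left(1 + \tfrac{1}{\lln}\right) \leq (1 + \alpha/2)m
\]
vertices of $T$, the final inequality holding once $n$ is sufficiently large (since $\alpha$ and $k$ are fixed and $\lln \to \infty$). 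In particular, there exists at least one outcome of the random allocation that is semi-canonical and for which each cluster receives at most $(1+\alpha/2)m$ vertices; fix such an allocation.

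It then remains to apply Lemma~\ref{l:approx-embedding} with this deterministic allocation, with $\vstar \deq v$ and with $\gamma \deq d^2$, refining the hierarchy $1/n \ll 1/k \ll \eps \ll d \ll \alpha$ from the corollary so that also $\eps \ll \gamma \ll d$ holds (which is fine: we are free to insist $\eps \ll d^2$, and $d^2 \ll d$ is automatic since $d$ is small). The vertex~$v$ has at least $d^2 m = \gamma m$ inneighbours in $V_k$ and at least $d^2 m = \gamma m$ outneighbours in $V_2$; each cluster has size $(1+\alpha)m$ which lies in the range $[(1+\alpha)m, 3m]$; and at most $(1+\alpha/2)m$ vertices of $T$ are allocated to each cluster. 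So all hypotheses of Lemma~\ref{l:approx-embedding} are satisfied, and it produces the desired copy of $T$ in $G$ with $r \mapsto v$ and each vertex embedded in the cluster to which it was allocated. No step here presents a genuine obstacle; the only point requiring any care is checking that the three hierarchies of parameters (from the two lemmas and the corollary) can be made simultaneously consistent, which is routine.
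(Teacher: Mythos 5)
Your proposal is correct and is essentially the paper's own proof: the paper derives this corollary exactly by running the Vertex Allocation Algorithm and combining Lemma~\ref{l:3.3-stren} (semi-canonicity plus the near-uniform per-cluster counts, giving at most $(1+\alpha/2)m$ vertices per cluster for large $n$) with Lemma~\ref{l:approx-embedding}, the $d^2m$ degree condition on $v$ serving precisely to guarantee the $\gamma m$ in-/outneighbour hypothesis for a suitable $\gamma$ with $\eps\ll\gamma\ll d$. Your handling of the small-tree case and the parameter bookkeeping is fine, so nothing further is needed.
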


Recall that Theorem~\ref{t:log-bounded-deg} of this paper is a sharpened version 
of~\cite[Theorem~1.4(2)]{KMO11:sumner_approximate}. The proof of Theorem~\ref{t:log-bounded-deg} 
is identical to the proof of~\cite[Theorem~1.4(2)]{KMO11:sumner_approximate} given 
in~\cite{KMO11:sumner_approximate} from this point onwards, 
using Corollary~\ref{c:cctcombined} above in place of~\cite[Lemma~3.2]{KMO11:sumner_approximate}.

In the proof of Theorem~\ref{t:good-unavoidable} we use the following corollary. 
This is a consequence of Theorem~\ref{t:log-bounded-deg} and \Sahili's theorem~\cite{Sahili2004} that, 
for every~$m \in \bbn$, every tournament on at least~$3m-3$ vertices contains every oriented 
tree on~$m$ vertices. Indeed, this corollary is simpler to apply since it holds for both small 
and large trees.

\begin{corollary}\label{cr:S-KMO}
Suppose that~$1/n \ll \alpha,1/C$. 
Let~$T$ be an oriented tree on~$n' \leq n$ vertices with~$\Delta(T)\leq (\log n)^C$,
and let~$G$ be a~tournament on at least~$n' + \alpha n$ vertices. 
Then~$G$ contains a copy of~$T$.
\end{corollary}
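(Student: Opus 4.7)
The plan is to dispatch this corollary by a simple dichotomy on the size of $T$ relative to $n$, leveraging \Sahili's theorem when $T$ is much smaller than $n$ and Theorem~\ref{t:log-bounded-deg} when $T$ is comparable in size to $n$. The threshold I would pick is $n' = \alpha n / 2$.

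In the small regime $n' \leq \alpha n / 2$, the host tournament has at least $n' + \alpha n \geq 3n' \geq 3n' - 3$ vertices, which by \Sahili's theorem~\cite{Sahili2004} is already enough to contain any oriented tree on $n'$ vertices (the maximum-degree hypothesis is not needed here at all).

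In the large regime $n' > \alpha n / 2$, I would apply Theorem~\ref{t:log-bounded-deg} to $T$ viewed as a tree on $n'$ vertices, with parameters $\alpha$ and $C' \deq C + 1$. The size hypothesis $|V(G)| \geq (1+\alpha) n'$ is immediate from $|V(G)| \geq n' + \alpha n$ together with $n' \leq n$. For the degree hypothesis, the lower bound $n' \geq \alpha n / 2$ combined with $1/n \ll \alpha$ yields $\log n' \geq \tfrac{1}{2} \log n$ once $n$ is large, so that $\Delta(T) \leq (\log n)^C \leq (2 \log n')^C \leq (\log n')^{C+1}$. That same lower bound on $n'$, together with $1/n \ll \alpha, 1/C$, forces $n' \geq n_0(\alpha, C+1)$ as required by Theorem~\ref{t:log-bounded-deg}. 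Combining the two cases covers every $n' \leq n$.

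I do not anticipate a genuine obstacle: the one subtlety is the routine bookkeeping that translates the degree bound written in terms of the ambient parameter $n$ into the form $(\log n')^{C'}$ demanded by Theorem~\ref{t:log-bounded-deg}, which is automatic once $n'$ is a constant fraction of $n$, and is bypassed entirely when $n'$ is much smaller.
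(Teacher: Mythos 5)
Your proposal is correct and follows essentially the same route as the paper: a dichotomy at the threshold $n' = \alpha n/2$, using El~Sahili's theorem for small $T$ and Theorem~\ref{t:log-bounded-deg} for large $T$. The only cosmetic difference is your choice of exponent $C+1$ where the paper uses $2C$ to absorb the loss from $\log n$ to $\log n'$; both work.
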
 

\begin{proof}
Fix~$\alpha, C > 0$ and choose~$n_0$ sufficiently large to apply Theorem~\ref{t:log-bounded-deg} 
with~$2C$ in place of~$C$, and also so that~$\log n_0 \geq \bigl(1 + \log (2/\alpha)\bigr)^2$.
Then we may assume that~$n \geq 2 n_0/\alpha$.
If~$n' > \alpha n/2$, then~$n' > n_0$,
so~$G$ contains a copy of~$T$ by Theorem~\ref{t:log-bounded-deg},
since~$G$ has at least~$n' + \alpha n \geq (1+\alpha)n'$ vertices 
and~$\Delta(T) \leq (\log n)^C \leq (\log n')^{2C}$.
On the other hand, if~$n' \leq \alpha n/2$, then~$|G| \geq n' + \alpha n \geq 3n'$, 
and thus~$G$ contains a copy of~$T$ by the aforementioned theorem of \Sahili.
\end{proof}

The modified proofs of Lemmas~\ref{l:F_i-stren} and~\ref{l:approx-embedding} and 
Theorem~\ref{t:log-bounded-deg} are presented in full in~\cite{naia18}.

\section{Almost-directed pairs}\label{s:embed-in-cut}

Our aim in this section is to prove Lemma~\ref{l:embed-nice-in-cut}, which states that
every nice oriented tree~$T$ of polylogarithmic maximum degree is contained in
every tournament whose vertex set admits a partition~$\{U, W\}$ into 
not-too-small sets~$U$ and~$W$ such that the pair~$(U, W)$ is almost-directed.

We begin with a definition and two lemmas.
If~$(X, Y)$ is a~$\mu$-almost-directed pair in a digraph~$G$, we say that an edge
$e \in E(G)$ is a~\defi{reverse edge} if~$e\in E(X\larr Y)$ (so, by definition, an almost-directed pair
has at most~$\mu |X||Y|$ reverse edges).
Our first lemma guarantees that we may partition the vertex set of an oriented tree~$T$ into 
sets~$A$ and~$B$ so that~$(A, B)$ is a directed pair in~$T$ and so that specific in-subtrees of~$T$
have all their vertices in~$A$
and specific out-subtrees of~$T$ have all their vertices in~$B$. Moreover, we may specify the 
sizes of~$A$ and~$B$ (subject to the trivial necessary conditions).

\begin{lemma}\label{l:nice-split}
Let~$T$ be an oriented tree on~$n$ vertices.
Let~$\calt^-$ be a collection of in-subtrees of~$T$,
and let~$\calt^+$ be a collection of out-subtrees of~$T$,
such that the 
trees in~$\calt^-\cup\calt^+$ are pairwise vertex-disjoint.
If~$a$ and~$b$ are integers with 
\[a\geq \Bigl|\bigcup_{S\in\calt^-}\!\!V(S)\,\Bigr|,
\quad
b\geq \Bigl|\bigcup_{S\in\calt^+}\!\!V(S)\,\Bigr|
\quad\text{and}\quad
a+b=n,
\]
then there exists a partition~$\{A, B\}$ of~$V(T)$ with~$|A|=a$ and~$|B|=b$ such that
$(A, B)$ is a directed pair in~$T$ and
\[
\bigcup_{S\in\calt^-}\!\!V(S) \subseteq A
\quad\text{and}\quad
\bigcup_{S\in\calt^+}\!\!V(S)\subseteq B.
\]
\end{lemma}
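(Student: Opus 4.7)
The plan is to start with $A_0 \deq \bigcup_{S\in\calt^-} V(S)$ and greedily add vertices one at a time until $|A| = a$, preserving throughout the invariant that $A$ is \emph{in-closed} in~$T$, meaning every in-neighbour in~$T$ of a vertex of~$A$ is itself in~$A$. Since $A$ being in-closed is exactly equivalent to $\bigl(A,\,V(T)\sm A\bigr)$ being a directed pair in~$T$, this is the right invariant to maintain, and the only remaining requirement is $\bigcup_{S\in\calt^+}V(S) \subseteq V(T)\sm A$.

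Two structural observations do most of the work. Setting $V^* \deq V(T)\sm\bigcup_{S\in\calt^+}V(S)$, the fact that each $S\in\calt^+$ is an out-subtree means the unique edge between $V(S)$ and $V(T)\sm V(S)$ is directed into~$S$; hence no edge of~$T$ leaves~$S$, and consequently every in-neighbour (in~$T$) of every vertex of~$V^*$ also lies in~$V^*$. So if we ensure $A\subseteq V^*$, then $V(S)\subseteq V(T)\sm A$ is automatic for each $S\in\calt^+$, and in-closedness of~$A$ may be checked using only edges of~$T[V^*]$. Dually, each $S\in\calt^-$ is an in-subtree, so no edge of~$T$ enters~$S$ from outside; this forces every in-neighbour of a vertex of~$A_0$ to lie in the same tree of~$\calt^-$, making $A_0$ in-closed from the outset. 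Vertex-disjointness of $\calt^-$ and $\calt^+$ gives $A_0\subseteq V^*$, and $|A_0|\leq a$ holds by hypothesis.

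For the greedy step, suppose $|A|<a$ and look at the oriented subgraph $T[V^*\sm A]$. Since $|V^*| = n - \bigl|\bigcup_{S\in\calt^+}V(S)\bigr| \geq n - b = a$, this subgraph is non-empty; being a subgraph of a tree it is an oriented forest, hence a finite DAG, and so has a source $v$ — a vertex of $V^*\sm A$ with no in-neighbour in $V^*\sm A$. All in-neighbours of~$v$ in~$T$ lie in~$V^*$ (by the first observation), so those that are not in $V^*\sm A$ must lie in~$A$; adding~$v$ to~$A$ therefore preserves in-closedness. Iterating until $|A|=a$ yields the desired partition $\{A,\,B\}$ with $B\deq V(T)\sm A$.

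The main point requiring care is not an obstacle in the usual sense — it is simply recognising that the in/out-subtree hypotheses give the two closure statements above (no edges leaving any $S\in\calt^+$, no edges entering any $S\in\calt^-$), which together reduce the entire question to the routine task of successively extracting sources from a non-empty oriented forest.
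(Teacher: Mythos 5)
Your proof is correct and follows essentially the same approach as the paper: both build $A$ by starting from $\bigcup_{S\in\calt^-}V(S)$ and greedily adding, one at a time, a source of the remaining oriented forest outside $\bigcup_{S\in\calt^+}V(S)$, with the key observation being that an oriented forest always has a vertex with no in-neighbours. Your framing via the explicit "in-closed" invariant and the set $V^*$ is a slightly cleaner packaging of the same argument, but there is no substantive difference.
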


\begin{proof}
The key observation is that in every oriented forest there is
a~vertex with no inneighbours (since a forest has more vertices than edges).
Define~$V^-\deq \bigcup_{S\in\calt^-} V(S)$ and~$V^+ \deq\bigcup_{S\in\calt^+} V(S)$, and
let~$k\deq a-|V^-|$, so~$0 \leq k \leq n-|V^-| - |V^+|$.
Greedily choose distinct vertices~$v_1,v_2,\ldots,v_k$ of~$V(T)\setminus (V^-\cup V^+)$
such that~$v_i$ has no inneighbours in~$T - \bigl(V^-\cup V^+\cup \{v_1,\ldots,v_\imm\}\bigr)$
for each~$i\in[k]$. 
The desired partition is then~$A\deq V^-\cup \{v_1,\ldots,v_k\}$ and~$B\deq V(T)\setminus A$.
Indeed, we have~$V^-\subseteq A$,~$V^+\subseteq B$,~$|A|= |V^-| + k = a$ and~$|B| = n- |A| = b$.
It remains to show that~$(A, B)$ is a directed pair in~$T$. So suppose that~$u\rarr v$ is an 
edge of~$T$ and~$v\in A$. It then suffices to show that we must have~$u\in A$ as well.
For this, observe that since~$V^+ \subseteq B$ consists of outstars of~$T$, and~$v \in A$, 
we cannot have~$u \in V^+$. So if~$v \notin V^-$, then~$v = v_i$ for some~$i \in [k]$, and by 
choice of~$v_i$ we then have~$u \in V^- \cup \{v_1, \ldots, v_{i-1}\} \subseteq A$. 
On the other hand, if~$v\in V^-$ then~$v$ is a vertex of some in-subtree of~$T$, so~$u$ must 
be a vertex of the same in-subtree; it follows that~$u\in A$.
\end{proof}

Suppose now that~$T$ is an oriented tree of polylogarithmic maximum degree whose vertex set is 
partitioned into sets~$A$ and~$B$ which form a directed pair~$(A, B)$ in~$T$, and also that~$G$ 
is a tournament whose vertex set admits a partition into sets~$U$ and~$W$ such that~$(U, W)$ is 
an almost-directed pair 
in~$G$. The next lemma shows that if~$U$ and~$W$ are slightly larger than~$A$ and~$B$ respectively, 
then under the additional assumption that every vertex of~$G$ lies in few reverse edges, we may 
embed~$T$ in~$G$ so that vertices of~$A$ are embedded in~$U$ and vertices of~$B$ are embedded 
in~$W$. (Recalling the proof outline of Theorem~\ref{t:good-unavoidable}, we will use this lemma 
to embed the subtree~$T'$ in~$G$.)

\begin{lemma}\label{l:approx-embed-in-cut}
Suppose that~$1/n\ll 1/C$ and that $1/n \ll \mu \ll \alpha$.
Let~$T$ be an oriented tree with~$\Delta(T)\leq (\log n)^C$
and let~$\{A, B\}$ be a partition of~$V(T)$ such that~$(A,B)$ is a 
directed pair in~$T$.
Also let~$G$ be a tournament on~$n$ vertices.
If~$V(G)$ admits a partition~$\{U,W\}$ such that
\begin{enumerate}[label=(\roman*)]
\item \label{i:4.2i} $|U|\geq |A| +\alpha n$,
\item \label{i:4.2ii} $|W|\geq |B| +\alpha n$,
\item \label{i:4.2iii} for each~$u\in U$ we have $\deg^-(u,W)\leq \mu n$, and
\item \label{i:4.2iv} for each~$w\in W$ we have $\deg^+(w,U)\leq \mu n$,
\end{enumerate}
then there exists a copy of~$T$ in~$G$ such that every vertex in~$A$ 
is embedded in~$U$ and every vertex in~$B$ is embedded in~$W$.
\end{lemma}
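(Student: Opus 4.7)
Since $(A,B)$ is a directed pair in $T$, the tree decomposes into the forest $T[A]$, the forest $T[B]$, and cross edges all oriented from $A$ to $B$. I plan to embed $T$ in $G$ in two phases: first embed $T[A]$ into $G[U]$, then embed $T[B]$ into $G[W]$ so as to respect the already-embedded cross-edge endpoints. The slacks $|U|-|A|\geq\alpha n$ and $|W|-|B|\geq\alpha n$ provide room for Corollary~\ref{cr:S-KMO}-style embeddings, while conditions~\ref{i:4.2iii} and~\ref{i:4.2iv} ensure that each cross-edge constraint only forbids a sublinear fraction of $W$ or $U$.

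\textbf{Phase 1.} Let $C_1,\dots,C_p$ be the components of $T[A]$; process them sequentially. For each $i$, apply Corollary~\ref{cr:S-KMO} to embed $C_i$ in the tournament induced on the unused portion of $U$. At step $i$ this has at least
\[
|U|-\sum_{j<i}|C_j| \geq (|A|+\alpha n)-(|A|-|C_i|) = |C_i|+\alpha n
\]
vertices, so Corollary~\ref{cr:S-KMO} applies with $|C_i|$ in the role of $n'$. This yields a partial embedding $\phi|_A\colon V(T[A])\to U$, with all components $C_i$ mapped into $U$ with correct orientations.

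\textbf{Phase 2.} For each $b\in B$ let $P_b\deq\{a\in A : a\to b\in E(T)\}$ (so $|P_b|\leq\Delta(T)\leq(\log n)^C$, and recall all cross edges of $T$ are directed from $A$ to $B$) and $Q_b\deq\bigcap_{a\in P_b} N^+(\phi(a),W)\subseteq W$. Any valid extension of $\phi$ to a copy of $T$ in $G$ must place $\phi(b)\in Q_b$ for every $b\in B$. By~\ref{i:4.2iii}, $|W\setminus N^+(\phi(a),W)|\leq\mu n$ for each $a\in A$, whence $|W\setminus Q_b|\leq|P_b|\mu n=\littleo(n)$. I then embed the components of $T[B]$ one at a time into the unused portion of $G[W]$ using a variant of Corollary~\ref{cr:S-KMO} that accommodates per-vertex forbidden sets of sublinear size, placing each $b\in B$ inside $Q_b$.

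\textbf{Main obstacle.} The key technical step is establishing the constrained variant of Corollary~\ref{cr:S-KMO} used in Phase~2. Since the forbidden sets have size $\littleo(n)$ while the slack on each side is $\alpha n$, I expect to handle this by revisiting the proof of Theorem~\ref{t:log-bounded-deg} and absorbing the forbidden sets into the ``bad vertex'' accounting of the Vertex Embedding Algorithm of Section~\ref{s:alloc-embed}. Alternatively, one can argue directly via a greedy avoidance step: because $T$ has polylogarithmic maximum degree and each forbidden set has size $\littleo(n)$, the total number of candidates blocked at any single embedding step is still $\littleo(n)\ll\alpha n$, so a valid choice always remains available.
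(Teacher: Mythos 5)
The proposal has a genuine gap in Phase~2. Your bound $|W\setminus Q_b|\le|P_b|\mu n=\littleo(n)$ is false in general: $\mu$ is a fixed positive constant in the hierarchy $1/n\ll\mu\ll\alpha$, so if $b$ has $|P_b|$ as large as $\Delta(T)\le(\log n)^C$, then $|P_b|\mu n$ can be $(\log n)^C\mu n$, which for large $n$ exceeds $\alpha n$ and indeed $|W|$ itself, so the union bound says nothing. Worse, this is not merely a loose estimate: Phase~1 embeds $T[A]$ with no regard to the cross edges, so it can happen that the inneighbours of a single $b\in B$ are sent to roughly $1/\mu$ vertices of $U$ whose inneighbourhoods in $W$ partition $W$, in which case $Q_b=\bigcap_{a\in P_b}N^+(\phi(a),W)$ is empty and Phase~2 cannot even begin. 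The ``$A$ first, then $B$'' decomposition discards exactly the structural information needed to keep the per-vertex constraints to one per step.

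The paper's proof avoids this by never separating the two sides. It takes the forest $F=T[A]\cup T[B]$, contracts each component $C_j$ to a single vertex $v_j$ to obtain a tree $T'$, and embeds the components $C_1,\dots,C_s$ following an ancestral ordering of $T'$. Because $T'$ is a tree, each new component $C_j$ is attached to the already-embedded part by exactly one cross edge $u\to v$ (or $u\larr v$), so the only constraint is that all of $C_j$ be placed inside $N^+(\phi(u))\cap W$ (respectively $N^-(\phi(u))\cap U$); conditions~\ref{i:4.2iii}--\ref{i:4.2iv} ensure this set has size at least $|C_j|+\alpha n/2$, and Corollary~\ref{cr:S-KMO} applies directly. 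No constrained variant of Corollary~\ref{cr:S-KMO} is needed, and no vertex ever faces more than one ``forbidden'' half-neighbourhood. If you want to repair your proposal, replacing the two-phase split by this interleaved, ancestrally-ordered component embedding is the essential change.
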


\begin{proof}
Consider the oriented forest~$F$ with~$F\deq T[A]\cup T[B]$ (in other words,~$V(F) = V(T)$ and 
the edges of~$F$ are the edges of~$T$ with both endvertices in~$A$ or both endvertices in~$B$).
Let~$C_1, \ldots, C_s$ be the components of~$F$, and let~$T'$ be the~minor of~$T$
that we obtain by contracting~$V(C_j)$ to a single vertex~$v_j$, for each~$j\in[s]$.
We may assume the components are labelled 
so that~$v_1,\ldots,v_s$ is an~ancestral ordering of~$V(T')$.
We will greedily embed~$C_1,\ldots,C_s$ in~$G$ in that order, 
defining a~mapping~$\phi\colon V(T)\to U\cup W$.
For each~$j\in[s]$, let~$U_j$ (respectively~$W_j$) be the
set of vertices of~$U$ (respectively~$W$) which have not been covered
by the embedding of~$C_1, \dots, C_{j-1}$.

If~$V(C_1) \subseteq A$, then by~\ref{i:4.2i} we 
have~$|U_1| = |U| \geq |A| + \alpha n \geq |C_1| + \alpha n$, 
so there exists a copy of~$C_1$ in~$G[U_1]$ by Corollary~\ref{cr:S-KMO}.
By a similar argument using~\ref{i:4.2ii} we may embed~$C_1$ in~$G[W_1]$ if~$V(C_1) \subseteq B$.
Now suppose that we have already embedded components~$C_1,\ldots,C_{j-1}$ for some~$1 < j \leq n$, 
so~$\phi(v)$ is defined for every~$v\in \bigcup_{i=1}^{j-1} V(C_i)$. Since we assumed that 
$v_1, \dots, v_s$ was an ancestral ordering of~$V(T')$, 
there exists a unique integer~$i \in [j-1]$ for which some vertex~$u \in V(C_{i})$ is adjacent to
some vertex~$v \in C_j$.
Suppose first that~$u \rarr v\in E(T)$. 
Then~$C_{i}$ has been embedded in~$U$ and~$C_j$ is a component of~$T[B]$,
and we want to embed~$C_j$ in~$W_j\cap N^+\bigl(\phi(u)\bigr)$.
Note that~$\phi(u)$ has at most~$\mu n$ inneighbours in~$W$ by~\ref{i:4.2iii}, 
so by~\ref{i:4.2ii} the number of outneighbours of~$\phi(u)$ in~$W$ which are not in the~image of~$\phi$ (that is,
which are not covered by the embedding so far) is at 
least~$|W_j| - \mu n \geq \bigl(|W| - |B| + |C_j|\bigr) - \mu n \geq |C_j| + \alpha n - \mu n \geq |C_j| +\alpha n/2$.
We may therefore embed~$C_j$ in~$G[W_j]$ by~Corollary~\ref{cr:S-KMO}.
If instead~$u \larr v\in E(T)$ then~$C_j$ is a component of~$T[A]$ and we may embed~$C_j$ in~$G[U_j]$ 
by a similar argument using~\ref{i:4.2i} and~\ref{i:4.2iv}. In either case we have extended~$\phi$ as desired, 
and so proceeding in this manner gives a copy of~$T$ in~$G$.
\end{proof}

We are now ready to state and prove Lemma~\ref{l:embed-nice-in-cut}, the main result of this section,
following the approach sketched in the proof outline of Theorem~\ref{t:good-unavoidable}.

\begin{lemma}\label{l:embed-nice-in-cut}
  Suppose that~$1/n \ll 1/C$ and that $1/n \ll \mu \ll\alpha,\nu$. 
  Let~$G$ be a tournament on~$n$ vertices, and let~$T$ be 
  an~$\alpha$-nice oriented tree on~$n$~vertices with~$\Delta(T)\leq (\log n)^C$.
  If there is a partition~$\{U, W\}$ of~$V(G)$ with~$|U|,|W|\geq \nu n$ such that
  $(U,W)$ is a~$\mu$-almost-directed pair in~$G$,
  then~$G$ contains a~(spanning) copy of~$T$.
\end{lemma}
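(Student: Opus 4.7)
My plan is to realise the outline in Section~\ref{s:intro} for the almost-directed case. Let
\[
X \deq \{\, u \in U : \deg^-(u,W) > \sqrt{\mu}\,n \,\} \cup \{\, w \in W : \deg^+(w,U) > \sqrt{\mu}\,n \,\}
\]
be the set of \emph{atypical} vertices of~$G$; double-counting reverse edges against the bound $e(G[U\larr W]) \leq \mu n^2$ yields $|X| \leq 2\sqrt{\mu}\,n$. Using that~$T$ is $\alpha$-nice and $\mu \ll \alpha$, I will select pairwise-disjoint vertex sets $S, S^-, S^+ \subseteq V(T)$, where $S$ consists of $|X|$ centres of distinct pendant stars~$B_i$ from the nice decomposition (each adjacent in~$T$ to both an in-leaf and an out-leaf of~$T$), $S^-$ consists of centres of distinct pendant stars~$A_j$ (each adjacent to an out-leaf), and $S^+$ consists of centres of further stars~$B_j$ (each adjacent to an in-leaf); the sizes $|S^\pm|$ are small multiples of $\sqrt{\mu}\,n$, tuned to the balance below. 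Form $T' \subseteq T$ by deleting one out-leaf at each vertex of~$S^-$, one in-leaf at each vertex of~$S^+$, and one in-leaf together with one out-leaf at each vertex of~$S$.

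Next, apply Lemma~\ref{l:nice-split} to~$T'$ with $\calt^-$ equal to the truncated $A_j$'s meeting~$S^-$ (in-subtrees of~$T'$) and $\calt^+$ equal to the truncated~$B_j$'s meeting $S \cup S^+$ (out-subtrees of~$T'$). Choose $a, b$ with $a + b = |V(T')|$ and $a = |U| - |X| - c_U$, $b = |W| - |X| - c_W$, where the slacks $c_U, c_W$ are of order $\sqrt{\mu}\,n$. The lemma produces a partition $(A, B')$ of~$V(T')$ which forms a directed pair in~$T'$ and satisfies $S^- \subseteq A$ and $S^+ \cup S \subseteq B'$. Working in $G' \deq G - X$ (in which every vertex of $U \setminus X$ has at most $\sqrt{\mu}\,n$ inneighbours in $W \setminus X$, and symmetrically), Lemma~\ref{l:approx-embed-in-cut} applied with $\mu$ replaced by $\sqrt{\mu}$ and $\alpha$ replaced by $\alpha/2$ produces an embedding $\phi\colon V(T') \to V(G')$ with $\phi(A) \subseteq U \setminus X$ and $\phi(B') \subseteq W \setminus X$; in particular $\phi(V(T')) \cap X = \emptyset$.

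Then, for each $y \in X$, pick an unused $s \in S$, set $x \deq \phi(s)$, and choose uncovered $x^+ \in N_G^+(x)$ and $x^- \in N_G^-(x)$. Both exist because~$x$ is typical (after enlarging~$X$ by at most $8|X|$ extra vertices using Lemma~\ref{l:semidegree} if required, so that every typical vertex has sufficient semidegree) and only $O(\sqrt{\mu}\,n)$ vertices are covered so far. Depending on the orientation of the edge between~$y$ and~$x$, embed the two leaves of~$s$ deleted from~$T$ to cover $\{y, x^+\}$ or $\{y, x^-\}$ exactly as in the sketch. Once all of~$X$ is absorbed, let $Q^- \subseteq U$ and $Q^+ \subseteq W$ be the remaining uncovered vertices; by the tuning of $c_U, c_W$ together with the choices of~$x^\pm$, we have $|P^-| = |Q^+|$ and $|P^+| = |Q^-|$ where $P^\pm \deq \phi(S^\pm)$. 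All four of $P^-, P^+, Q^-, Q^+$ are disjoint from (the enlarged)~$X$, so every vertex in~$P^-$ has at least $|Q^+| - \sqrt{\mu}\,n$ out-neighbours in~$Q^+$, and symmetrically for the three other degree conditions. Provided $|P^\pm|$ is a sufficiently large constant multiple of $\sqrt{\mu}\,n$, the bipartite digraphs $G[P^- \to Q^+]$ and $G[Q^- \to P^+]$ are balanced and $(d,\eps)$-super-regular for some $d \geq 1/2$ and $\eps \leq 1/5$, so Lemma~\ref{l:matching} yields perfect matchings in each. These matchings prescribe embeddings of the out-leaves adjacent to $S^-$ and the in-leaves adjacent to $S^+$ that had been deleted from~$T$, completing the spanning copy of~$T$ in~$G$.

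The main obstacle is the size bookkeeping in the absorption step: each absorption removes two vertices from~$G$, but whether both lie in~$U$, both lie in~$W$, or one lies in each depends on the locations of~$y$ and~$x$ and the orientation of the edge $\{y, x\}$, none of which is known when $c_U, c_W$ are fixed. My intended remedy is to keep a $\Theta(\sqrt{\mu}\,n)$-sized reserve within each of~$S$, $S^-$, and~$S^+$ and, once all of~$X$ has been absorbed, to correct the residual $O(\sqrt{\mu}\,n)$ imbalance in $|Q^\pm|$ versus $|P^\mp|$ by (i) preferring whichever of $x^+$ or~$x^-$ lies on the side that compensates the emerging imbalance whenever the edge direction between~$y$ and~$x$ allows a choice, and (ii) re-assigning a few reserve $s \in S$ to play the role of $S^-$- or~$S^+$-vertices (using only one of their two removed leaves during absorption and leaving the other for the matching stage). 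Once $|P^\mp| = |Q^\pm|$, the super-regularity of the two matching digraphs is immediate from the typicality definition, and the proof concludes routinely.
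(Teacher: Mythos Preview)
Your overall strategy mirrors the paper's proof, but there is a genuine gap in the absorption step which the paper handles by an idea you are missing.

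The problem is your claim that uncovered vertices $x^+\in N_G^+(x)$ and $x^-\in N_G^-(x)$ exist for each absorber $x=\phi(s)$. After embedding $T'$, only $|S^-|+|S^+|+2|S|=O(\sqrt{\mu}\,n)$ vertices of~$G$ remain uncovered, and you have no control over \emph{which} vertices these are. Enlarging $X$ via Lemma~\ref{l:semidegree} gives $x$ large semidegree in~$G$, but this says nothing about the semidegree of~$x$ in the tiny uncovered set; indeed $x\in W$ and being typical only bounds $\deg^+(x,U)$, so $x$ could easily have all of $N^+(x)$ covered. Your sentence ``only $O(\sqrt{\mu}\,n)$ vertices are covered so far'' appears to be a slip --- almost all of~$G$ is covered at this point.

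The paper's remedy is to set aside, \emph{before} embedding~$T'$, a reserve set $Y\subseteq W\setminus Z$ of size~$\psi n$ (with $\sqrt{\mu}\ll\psi\ll\beta$), chosen uniformly at random. One first discards from $W\setminus Z$ the few vertices of small semidegree in $G[W\setminus Z]$ (via Lemma~\ref{l:semidegree} applied \emph{inside} $W\setminus Z$, not in~$G$), and then a hypergeometric concentration argument shows that every remaining vertex has semidegree at least $2|Z|$ in~$Y$. The embedding of~$T'$ is then carried out in $(U\setminus Z)\cup\bigl(W\setminus(Z\cup Y\cup\{\text{discarded}\})\bigr)$, so $Y$ is uncovered by construction, and every absorber $x$ (embedded in the cleaned part of~$W$) is guaranteed to have $2|Z|$ inneighbours and $2|Z|$ outneighbours available in~$Y$. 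This also makes the bookkeeping you worry about essentially disappear: the ``extra'' vertex in each absorption step always lies in $Y\subseteq W$, so the effect on the $U$/$W$ counts is predictable in advance, and no post-hoc rebalancing is needed.

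Two smaller issues: first, your slacks $c_U,c_W$ of order $\sqrt{\mu}\,n$ are too small to invoke Lemma~\ref{l:approx-embed-in-cut} with $\alpha$ replaced by $\alpha/2$ --- that lemma requires slack of order $\alpha_{\text{lemma}}\cdot n$, and you need $\sqrt{\mu}\ll\alpha_{\text{lemma}}$; the paper takes $|S^\pm|=t=\lceil\beta n\rceil$ with $\sqrt{\mu}\ll\psi\ll\beta\ll\alpha$. Second, your remedy~(ii) cannot convert a reserve $s\in S$ to play the role of an $S^-$-vertex, since $s$ is embedded in~$W$ but $S^-$-vertices must be embedded in~$U$ for the final matching $G[P^-\to Q^+]$ to make sense.
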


\begin{proof}
Introduce new constants~$\psi$ and~$\beta$
so that~$1/n \ll \mu \ll \psi \ll \beta \ll \alpha, \nu$.
Since~$(U,W)$ is a~$\mu$-almost directed pair in~$G$, there are at most~$\mu|U||V|$ reverse edges, 
so at~most~$\sqrt{\mu}|U|$ vertices of~$U$ 
are incident to at~least~$\sqrt{\mu}|W|$ reverse edges, 
and at~most~$\sqrt{\mu}|W|$ vertices of~$W$ 
are incident to at~least~$\sqrt{\mu}|U|$ reverse edges. 
Let~$Z$ be the set of all such vertices, so~$z \deq |Z| \leq \sqrt{\mu}\bigl(|U|+|W|\bigr) = \sqrt{\mu}n$.
Now let~$W_0 \deq W \sm Z$, and let~$X$ be the set of all vertices~$w \in W_0$
with~$\deg^0(w, W_0) < \psi n$. Then by~Lemma~\ref{l:semidegree} we have~$|X| < 4\psi n$.
Choose a~subset~$Y \subseteq W_0$ of size~$\psi n$ uniformly at~random. 
Note that for each~$w\in W_0 \setminus X$
the values of~$\deg^-(w,Y)$ and of~$\deg^+(w,Y)$
then have a hypergeometric distribution
with expectation at~least~$\psi n |Y|/|W_0|\geq \psi^2 n$,
so~$\bbp(\,\deg^0(w,Y) < \psi^2 n/2\,)$ decreases exponentially with~$n$
by~Theorem~\ref{t:exp}.
Taking a union bound over the at most~$n$ vertices~$w \in W_0 \setminus X$ we find that 
with positive probability every~$w\in W_0 \setminus X$
has~$\deg^0(w, Y) \geq \psi^2 n/2 \geq 2z$. 
Fix a choice of~$Y$ for which this event occurs
and define~$U' \deq U \setminus Z$ and~$W' \deq W_0\setminus(Y\cup X)$. 
Also let~$n' \deq |U' \cup W'|$, so~$n' \geq n - |X| - |Y| - |Z| \geq (1 - 6\psi) n$. 
Observe that we then have the following properties.
\begin{enumerate}[label=(\alph*)]
\item \label{i:4.3i} Every vertex~$u \in U \sm Z$ has~$\deg^-(u, W') \leq \sqrt{\mu}|W| \leq \psi n'$.
\item \label{i:4.3ii} Every vertex~$w \in W \sm Z$ has~$\deg^+(w, U') \leq \sqrt{\mu}|U| \leq \psi n'$.
\item \label{i:4.3iii} Every vertex~$w \in W'$ has~$\deg^0(w, Y) \geq 2z$.
\item \label{i:4.3iv} $|U'| \geq |U| - |Z| \geq |U| - \sqrt{\mu}n$ and $|W'| \geq |W| - |X| - |Y| - |Z| \geq |W| - 6\psi n$. 
\item \label{i:4.3v} $\Delta(T) \leq (\log n)^C \leq (\log n')^{2C}$.
\end{enumerate}

Define~$t\deq \lceil \beta n \rceil$. Let~$\cals^-$ be the set of pendant instars of~$T$ which 
contain an out-leaf of~$T$, and let~$\cals^+$ be the set of pendant outstars of~$T$ which contain both 
an in-leaf of~$T$ and an out-leaf of~$T$. Observe that~$\cals^- \cup \cals^+$ is then a set of 
vertex-disjoint subtrees of~$T$. Moreover, since~$T$ is~$\alpha$-nice, 
we have~$|\cals^-|, |\cals^+| \geq \alpha n$. We define~$S_1^-, \ldots, S_t^-$ to be the 
smallest~$t$ members of~$\cals^-$ and~$S_1, \ldots, S_{t+z}^+$ to be the smallest~$t+z$ members 
of~$\cals^+$. Since~$t+z \leq 2\beta n$ we must then 
have~$\bigl|\bigcup_{i \in [t]} V(S_i^-)\bigr|, \bigl|\bigcup_{i \in [t+z]} V(S_i^+)\bigr| \leq 2\beta n/ \alpha$. 
For each~$i \in [t]$ let~$\ell_i^+$ be an out-leaf of~$T$ in~$S_i^-$ 
and let~$c_i^-$ be the centre of the star~$S_i^-$, and for 
each~$i \in [z]$ let~$\ell_{t+i}^+$ be an out-leaf of~$T$ in~$S_{t+i}^+$. 
Similarly, for each~$i \in [t+z]$ let~$\ell_i^-$ be an in-leaf of~$T$ 
in~$S_i^+$ and let~$c_i^+$ be the centre of the star~$S_i^+$. We can be sure that these leaves exist 
by definition of~$\cals^+$ and~$\cals^-$.

We now define~$T'$ to be the subtree of~$T$ obtained by deleting the 
leaves~$\ell_i^+$ and~$\ell_i^-$ from~$T$ for each~$i \in [t+z]$.
So~$L^-_i \deq S_i^--\ell_i^+$ (respectively~$L_i^+ \deq S_i^+-\ell_i^-$) is an in-subtree 
(respectively out-subtree) of~$T'$ for each~$i \in [t]$, and 
$L_{t+j}^+ \deq S_{t+j}^+ -\{\ell_{t+j}^-,\ell_{t+j}^+\}$ is an out-subtree of~$T'$ for 
each~$j \in [z]$.
Also define~$a \deq |U|-t$ and~$b \deq |W|-t-2z$. Then 
we have~$a \geq \nu n - t \geq 2\beta n/\alpha \geq \bigl|\bigcup_{i \in [t]} V(L_i^-)\bigr|$ 
and~$b \geq \nu n - t - 2z \geq 2\beta n/\alpha \geq \bigl|\bigcup_{i \in [t+z]} V(L_i^+)\bigr|$, 
and also~$a+b = |U| + |W| - 2t-2z = |T'|$, so we may apply 
Lemma~\ref{l:nice-split} to obtain a partition~$\{A, B\}$ of~$V(T')$ with~$|A| = a$
and~$|B| = b$ such that~$(A, B)$ is a directed pair in~$T'$ and so that~$V(L_i^-) \subseteq A$ 
for each~$i \in [t]$ and~$V(L_i^+) \subseteq B$ for each~$i \in [t+z]$. Next, since by~\ref{i:4.3iv} 
we have~$|U'| \geq a + \beta n'/2$ and~$|W'| \geq b + \beta n'/2$, 
by~\ref{i:4.3i},~\ref{i:4.3ii} and~\ref{i:4.3v} we may apply Lemma~\ref{l:approx-embed-in-cut} 
(with~$n', \psi, 2C$ and~$\beta/2$ in place of~$n, \mu, C$ and~$\alpha$ respectively)
to obtain an embedding~$\phi$ of~$T'$ in~$G$ so that~$\phi(A) \subseteq U'$ and 
$\phi(B) \subseteq W'$. 

We next embed the vertices~$\ell_{t+j}^+$ and~$\ell_{t+j}^-$ for~$j \in [z]$ so that 
all vertices of~$Z$ are covered. Note  that our embedding of~$T'$ in~$G$ ensured that
for each~$j\in[z]$ the centre~$c_{t+j}^+$ of~$S_{t+j}^+$ was embedded to
a~vertex~$w_{t+j} \deq \phi(c_{t+j}^+)$ in~$W'$, so in particular we 
have~$\deg^0(w_{t+j},Y) \geq 2z$ by~\ref{i:4.3iii}.
This means that we can greedily choose distinct vertices~$y_1^-,y_1^+,\ldots,y_z^-,y_z^+ \in Y$
so that for each~$j\in[z]$ the vertex~$y_j^-$ is an inneighbour of~$w_{t+j}$
and~$y_j^+$ is an outneighbour of~$w_{t+j}$.
Write~$Z \deq \{q_1,\ldots,q_z\}$, and for each~$j\in[z]$ consider the orientation of the edge 
of~$G$ between~$q_j$ and~$w_{t+j}$.
If~$q_j\rarr w_{t+j}\in E(G)$, then we set~$\phi(\ell_{t+j}^-) \deq q_j$ and~$\phi(\ell_{t+j}^+)\deq y_j^+$. 
Similarly, if~$q_j\larr w_{t+j}\in E(G)$, then we set~$\phi(\ell_{t+j}^-) \deq y_j^-$ 
and~$\phi(\ell_{t+j}^+)\deq q_j$.

Observe that we have now embedded all of the vertices of~$T$ except for the 
leaves~$\ell_1^+,\ldots,\ell_t^+$ and~$\ell_1^-,\ldots,\ell_t^-$.
Let~$P^- \deq \{\,\phi(c_i^-) : i\in[t]\,\}$ and~$P^+ \deq \{\,\phi(c_i^+) : i\in[t]\,\}$, 
so~$P^- \subseteq U'$ and~$P^+ \subseteq W'$. 
Also, let~$Q^-$ be the set of uncovered vertices of~$U$ and let~$Q^+$ be the set of uncovered vertices
of~$W$. Then~$|Q^-| = |U| - a = t$, and~$|Q^+| = |W| - b - 2z =t$, so 
we have~$|P^-| = |P^+| = |Q^-| = |Q^+| = t$. Observe that since we already covered all vertices of~$Z$, 
we also have~$Q^- \subseteq U \sm Z$ and~$Q^+ \subseteq W \sm Z$. Together with the fact 
that~$t =\lceil \beta n \rceil$, by~\ref{i:4.3i} and~\ref{i:4.3ii} it follows 
that~$G[P^- \rarr Q^+]$ and~$G[Q^- \rarr P^+]$ are both~$(1, \frac{1}{2})$-super-regular, 
so the balanced bipartite underlying graph of each contains a perfect matching by Lemma~\ref{l:matching}. 
For each~$j \in [t]$ let~$\phi(\ell_j^+) \in Q^+$ (respectively~$\phi(\ell_j^-) \in Q^-$) be the vertex 
matched to~$\phi(c_j^-) \in P^-$ (respectively~$\phi(c_j^+) \in P^+$); 
this completes the embedding~$\phi$ of~$T$ in~$G$.
\end{proof}

\section{Cycles of cluster tournaments}

\label{s:embed-in-CCT}

Our goal in this section is to prove Lemma~\ref{l:in-CCT}, which states that every
sufficiently large tournament
containing an almost-spanning regular~\CCT\ contains a spanning 
copy of every nice oriented tree~$T$ with polylogarithmic maximum degree. 
Recall from the proof sketch of Theorem~\ref{t:good-unavoidable} that for this we 
split~$T$ into two subtrees~$T_1$ and~$T_2$. We then embed~$T_1$ so that all `atypical' 
vertices are covered and so that roughly the same number of vertices from each cluster 
are covered. Since~$T_1$ covered all atypical vertices, the vertices which remain uncovered then
form a super-regular \CCT, and we use this fact to embed~$T_2$ to cover all vertices which remain uncovered 
and so complete the embedding of~$T$ in~$G$. In Section~\ref{s:1st} we focus on the 
embedding of~$T_1$, showing that can find an embedding with the desired 
properties (Lemma~\ref{l:1st-half}). Likewise, in Section~\ref{s:2nd} we consider the 
embedding of~$T_2$, and prove that we can indeed embed~$T_2$ so as to 
cover all remaining vertices, as desired (Lemma~\ref{l:2nd-half}). 
Finally, in Section~\ref{s:joining} we combine these
results to prove Lemma~\ref{l:in-CCT} by first splitting~$T$ into subtrees~$T_1$ and~$T_2$ and then
successively embedding these subtrees using Lemmas~\ref{l:1st-half} and~\ref{l:2nd-half}. 

\subsection{Embedding the first subtree} \label{s:1st}

The subtree~$T_1$ will have polylogarithmic maximum degree and will
contain many vertices which are adjacent to
at least one in-leaf and at least one out-leaf of~$T$, and
we wish to embed~$T_1$ into a tournament~$G$ which contains an almost-spanning \CCT\ so
that approximately the same number of vertices of~$T_1$ are embedded in each cluster.
The following lemma states that we can indeed do this.

\begin{lemma}\label{l:1st-half}
  Suppose that~$1/n \ll 1/C$ and that
  $1/n\ll 1/k \ll \eps \ll d\ll\psi\ll\beta\ll \alpha$.
  Let~$T$ be an~oriented tree on~$n$~vertices with root~$r$, with 
  maximum degree~$\Delta(T)\leq (\log n)^C$, and 
  which contains at least~$\beta n$~distinct vertices that are each adjacent to at 
  least one in-leaf and at least one out-leaf of~$T$.
  Let~$G$ be a tournament which contains a~$(d, \eps)$-regular \CCT\ whose 
  clusters~$V_1, \dots, V_k$ have size~$(1+\alpha)\frac{n}{k} \leq |V_i| \leq \frac{3n}{k}$ for each \ink,
  and assume additionally that~$B \deq V(G) \sm \bigcup_\ink V_i$ has size~$|B| \leq \psi n$.
  Then there exists an~embedding~$\phi$ of~$T$ in~$G$
  covering~$B$, such that~$r$ is embedded in~$V_1$ 
  and such that for each~\ink\ we have
  \[
  \bigl|\phi\bigl(V(T)\bigr) \cap V_i\bigr| 
  = \bigl(n-|B|\bigr)\left(\frac{1}{k}\pm  {\frac{2}{\log\log n}}\right).
  \]
\end{lemma}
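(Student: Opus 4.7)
The plan is to implement the strategy outlined for the \CsCT\ case in the proof sketch of Theorem~\ref{t:good-unavoidable}: identify a set $L$ of vertices of $T$ each adjacent to both an in-leaf and an out-leaf of $T$, remove those leaves to form a subtree $T'$, embed $T'$ into $G$ using the machinery of Section~\ref{s:alloc-embed}, and then reinsert the removed leaves so that the exceptional set $B$ is covered while the cluster totals remain balanced.

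To begin, I would fix $L \subseteq V(T)$ with $|L| = \lceil \beta n \rceil$ such that each $x \in L$ has a designated in-leaf $\ell_x^-$ and a designated out-leaf $\ell_x^+$ among its neighbours in $T$, and set $T' \deq T - \bigcup_{x \in L}\{\ell_x^-,\ell_x^+\}$, so $|T'| = n - 2|L|$. Applying the Vertex Allocation Algorithm (Algorithm~\ref{a:alloc}) to $T'$ rooted at $r$ yields a semi-canonical allocation by Lemma~\ref{l:3.3-stren}\ref{l:3.3-stren-i}, and Lemma~\ref{l:3.3-stren}\ref{l:3.3-stren-iv}, applied separately with $S = V(T')$ and with $S = L$, gives (with probability $1 - \littleo(1)$) an outcome in which each cluster $V_i$ receives $|T'|\bigl(\frac{1}{k} \pm \frac{1}{\lln}\bigr)$ vertices of $T'$ and $|L|\bigl(\frac{1}{k} \pm \frac{1}{\lln}\bigr)$ vertices of $L$. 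Fixing such an outcome and selecting a typical image $v^\star \in V_1$ for $r$ (with at least $\gamma m$ inneighbours in $V_k$ and $\gamma m$ outneighbours in $V_2$, which exists because only $O(\eps|V_1|)$ vertices of $V_1$ are atypical in the adjacent regular pairs), Lemma~\ref{l:approx-embedding} produces an embedding $\phi$ of $T'$ in $G$ respecting the allocation and sending $r$ to $v^\star$.

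To complete $\phi$ to an embedding of $T$, I would pick $L' \subseteq L$ of size $|B|$ with $\bigl|L' \cap \phi^{-1}(V_i)\bigr|$ within one of $|B|/k$ for every $\ink$ (possible because $\bigl|L \cap \phi^{-1}(V_i)\bigr| = (1 - \littleo(1))\beta n/k \gg |B|/k$), and fix a bijection $\sigma\colon L' \to B$. For each pair $(x, b)$ with $x \in L'$, $b = \sigma(x)$ and $\phi(x) \in V_i$, I inspect the unique tournament edge between $\phi(x)$ and $b$: if $\phi(x) \rarr b$, set $\phi(\ell_x^+) \deq b$ and embed $\ell_x^-$ to an uncovered inneighbour of $\phi(x)$ inside $V_i$; if $b \rarr \phi(x)$, set $\phi(\ell_x^-) \deq b$ and embed $\ell_x^+$ to an uncovered outneighbour of $\phi(x)$ inside $V_i$. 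For each unpaired $x \in L \sm L'$ with $\phi(x) \in V_i$, embed $\ell_x^-$ to an uncovered inneighbour and $\ell_x^+$ to an uncovered outneighbour of $\phi(x)$ inside $V_i$. Every vertex of $B$ is then covered by construction, and a short count gives
\[
\bigl|\phi(V(T)) \cap V_i\bigr|
= \bigl|T' \cap \phi^{-1}(V_i)\bigr| + 2\bigl|L \cap \phi^{-1}(V_i)\bigr| - \bigl|L' \cap \phi^{-1}(V_i)\bigr|
= \frac{n - |B|}{k} \pm \frac{(1 + 2\beta)\,n}{\lln},
\]
which lies within the tolerance $(n - |B|)\bigl(\frac{1}{k} \pm \frac{2}{\lln}\bigr)$ demanded by the lemma, since $1 + 2\beta < 2$ and $|B| \leq \psi n$.

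The main obstacle is the feasibility of this last greedy step: for each processed $x$ one must find uncovered in- or outneighbours of $\phi(x)$ inside the plain tournament $G[V_i]$, which is not assumed super-regular. Lemma~\ref{l:semidegree} limits the number of vertices in $V_i$ of semidegree below any fixed linear threshold, and the random choices of the Vertex Embedding Algorithm underlying Lemma~\ref{l:approx-embedding} distribute $\phi(L)$ essentially uniformly within each cluster, so one may discard in advance the $\littleo(|L|)$ elements $x \in L$ whose image lands on a low-semidegree vertex, still leaving $|L| \gg |B|$ valid elements. For the survivors, both the uncovered inneighbourhood and outneighbourhood of $\phi(x)$ inside $V_i$ have size $\Theta(\alpha n/k)$ throughout the process, whereas the total demand on any single cluster is $O(|L|/k) = O(\beta n/k) \ll \alpha n/k$; a routine greedy argument (or, alternatively, a Hall/matching argument in the spirit of Lemma~\ref{l:matching}) therefore completes the extension.
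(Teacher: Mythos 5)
Your overall architecture (remove one in-leaf and one out-leaf at each of $\sim\beta n$ chosen vertices, allocate and embed the pruned tree $T'$ via Algorithm~\ref{a:alloc}, Lemma~\ref{l:3.3-stren}\ref{l:3.3-stren-i},\ref{l:3.3-stren-iv} and Lemma~\ref{l:approx-embedding}, then reinsert the leaves to swallow $B$ and do the final count) matches the paper's proof. The decisive difference is that you embed $T'$ directly into the full clusters $V_1,\dots,V_k$ and only afterwards look for uncovered in-/out-neighbours of $\phi(x)$ inside $V_i$, and this is where your argument has a genuine gap. The embedding of $T'$ covers all but roughly an $(\alpha+2\beta)$-fraction of each cluster, and Lemma~\ref{l:approx-embedding} is a pure existence statement: you have no control over, and no randomness guarantee about, \emph{which} vertices of $V_i$ it leaves uncovered. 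Even if $\phi(x)$ has semidegree, say, $\alpha m/20$ or indeed $m/3$ in $V_i$, its entire inneighbourhood (or outneighbourhood) in $V_i$ may well lie inside the covered set, since the covered set has size about $(1-2\beta)m$. So the key claim in your last paragraph --- that ``both the uncovered inneighbourhood and outneighbourhood of $\phi(x)$ inside $V_i$ have size $\Theta(\alpha n/k)$ throughout the process'' --- is unsupported, and the appeal to the internal random choices of the embedding algorithm is not available from the lemmas as stated (nor would uniformity of $\phi(L)$ within a cluster imply anything about how the uncovered set meets $N^\pm(\phi(x))$). Relatedly, your proposal to ``discard in advance'' the $x\in L$ whose images land on low-semidegree vertices does not work as written: you have already deleted the leaves $\ell_x^\pm$ of \emph{every} $x\in L$ when forming $T'$, so those leaves still have to be embedded adjacent to $\phi(x)$, discarded or not.

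The paper closes exactly this hole by a reservation step carried out \emph{before} embedding $T'$: inside each $V_i$ it sets aside the low-semidegree set $B_i$ (via Lemma~\ref{l:semidegree}) and a random reservoir $Y_i$ of size $\alpha m/4$ such that, by hypergeometric concentration (Theorem~\ref{t:exp}), every vertex of $V_i\sm B_i$ has semidegree at least $4\beta m$ in $Y_i$. The tree $T'$ is then embedded only into $G[\bigcup_i (V_i\sm(Y_i\cup B_i))]$, which still satisfies the hypotheses of Lemma~\ref{l:approx-embedding} since $|V_i\sm(Y_i\cup B_i)|\geq(1+\alpha/2)m$. Consequently each reservoir $Y_{i}$ is untouched by the embedding of $T'$, and every image $p_j=\phi(w_j)$ is guaranteed at least $4\beta m$ uncovered inneighbours and outneighbours in $Y_{i_j}$, which is what makes the greedy choice of the vertices $c_j^-,c_j^+$ (and hence the covering of $B$ and the final count) go through. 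Without this reservation, or some substitute mechanism guaranteeing uncovered neighbours on both sides of every relevant image, your final step can fail, so the proof as proposed is incomplete.
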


Loosely speaking the proof proceeds as follows.
We begin by~selecting from each cluster~$V_i$ 
a~large subset~$V_i'$ of~vertices which each
have large semidegree in~$V_i\setminus V_i'$.
Then~$V_1', \dots, V_k'$ are the clusters of a regular \CCT\ in~$G'\deq G\bigl[\bigcup_\ink V_i'\bigr]$.
We~remove a small number of leaves from~$T$ to obtain a subtree~$T'$, 
and embed~$T'$ in~$G'$ by using the Vertex Allocation Algorithm (Algorithm~\ref{a:alloc}) and 
Lemma~\ref{l:approx-embedding}. Lemma~\ref{l:3.3-stren} then ensures that 
approximately the same number of vertices are embedded in each cluster.
Finally, we extend the embedding of~$T'$ in~$G$ to an embedding of~$T$ in~$G$ by embedding 
the removed leaves so as to cover all vertices of~$B$.

\begin{proof}
Define~$m \deq \frac{n}{k}$, so~$(1+\alpha)m \leq |V_i| \leq 3m$ for each~\ink, 
and let~$\delta\deq {\frac{1}{\log\log n}}$.
Let~$B_i$ be the set of all vertices~$x\in V_i$
such that~$\deg^0(x,V_i) < \alpha m/20$.
By~Lemma~\ref{l:semidegree} we have~$|B_i| < \alpha m/4$.
For each~\ink, pick a~subset~$Y_i\subseteq V_i$
of size~$|Y_i| = \alpha m/4$ uniformly at~random
with choices made independently for each~$i$.
Note that for each~\ink\ and each~$x\in V_i\setminus B_i$,
the random variables~$\deg^-(x,Y_i)$ and~$\deg^+(x, Y_i)$ then
have hypergeometric distributions
with expected value at~least~$(\alpha m/20) |Y_i|/|V_i| > 5\beta m$,
and thus~$\bbp(\,\deg^0(x,Y_i) < 4\beta m\,)$ decreases exponentially with~$n$
by~Theorem~\ref{t:exp}.
Taking a union bound, we find that there is a positive probability that for every~\ink\
and every~$x \in V_i\setminus B_i$ we have~$\deg^0(x, Y_i) \geq 4\beta m$. Fix a 
choice of sets~$Y_1,\ldots,Y_k$ such that this event occurs, and for each~\ink\
let~$V_i' \deq V_i \setminus (Y_i \cup B_i)$, so 
\[3m \geq |V_i| \geq |V_i'| \geq |V_i| - |B_i| -|Y_i| > 
(1 + \alpha)m - \frac{\alpha m}{4} - \frac{\alpha m}{4} = \left(1+\frac{\alpha}{2}\right)m.\]
Furthermore, every vertex~$x \in V'_i$ has~$\deg^0(x, Y_i) \geq 4\beta m$.
Now define~$G' \deq G[V_1' \cup \dots \cup V_k']$, and observe that since~$V_1, \dots, V_k$ were the 
clusters of a~$(d,\eps)$-regular \CCT\ in~$G$, by Lemma~\ref{l:slice-pair} 
the sets~$V'_1, \dots, V'_k$ are the clusters of a spanning~$(d, 3\eps)$-regular \CCT\ in~$G'$.
In particular we may choose a vertex~$v \in V'_1$ with at least~$(d-3\eps)|V'_k|$ inneighbours in~$V'_k$ 
and at least~$(d-3\eps)|V'_2|$ outneighbours in~$V'_2$. 
The tournament~$G'$, the clusters~$V'_1, \dots, V'_k$ and the vertex~$v$ then meet 
the conditions of Lemma~\ref{l:approx-embedding} 
with~$\alpha/2$ and~$3\eps$ in place of~$\alpha$ and~$\eps$ respectively 
(and with~$n$ playing the same role there as here).

Let~$t\deq \lceil \beta n\rceil - 1$, and 
choose a~set~$W\deq \{w_1,\ldots,w_t\}$ of~$t$ 
distinct vertices in~$T$ so that each~$w_i$ is adjacent to at least
one~in-leaf and at least one~out-leaf of~$T$ and so that~$r$ is not a leaf of~$T$ which is adjacent to 
a vertex of~$W$ (such a set exists by the assumptions of the lemma).
For each~$j\in[t]$, let~$w_j^-$ and~$w_j^+$ be respectively
an~in-leaf and an~out-leaf adjacent to~$w_j$.
Let~$T'$ be the oriented tree we obtain by deleting from~$T$ the
vertices~$w_j^-$ and~$w_j^+$ for each~$j\in[t]$, so~$|T'| = n-2t$ 
and~$\Delta(T') \leq \Delta(T) \leq (\log n)^C \leq (\log (n-2t))^{2C}$. 
Also take~$r$ to be the root of~$T'$, and
apply the Vertex Allocation Algorithm (Algorithm~\ref{a:alloc}) to allocate the vertices of~$T'$ to 
the clusters~$V'_1, \dots, V'_k$. 
By~Lemma~\ref{l:3.3-stren}\ref{l:3.3-stren-i} the obtained allocation will be semi-canonical. 
Moreover, by two applications of Lemma~\ref{l:3.3-stren}\ref{l:3.3-stren-iv} 
(with~$\beta/2$ and~$2C$ in place of~$\alpha$ and~$C$ respectively) we have with 
probability~$1-\littleo(1)$ that for each~\ink\ the number of vertices of~$T'$ allocated to the 
cluster~$V'_i$ is 
\begin{equation} \label{e:wbound}
(n-2t)\left(\frac{1}{k} \pm \frac{1}{\log \log (n-2t)}\right) = \frac{n-2t}{k} \pm \frac{3 \delta n}{2},
\end{equation}
and the number of vertices of~$W$ allocated to the cluster~$V'_i$ is 
\begin{equation} \label{e:tbound}
 t \left(\frac{1}{k} \pm \frac{1}{\log \log (n-2t)}\right) = \frac{t}{k} \pm \frac{3\delta t}{2}.
 \end{equation}
Fix an outcome of the Vertex Allocation Algorithm for which each of these events occurs, and 
apply~Lemma~\ref{l:approx-embedding}
to~obtain an~embedding~$\phi$ of~$T'$ in~$G'$
so that~$r$ is embedded to~$v$ and each vertex of~$T'$ is embedded in the cluster~$V'_i$
to which it is allocated. In particular~$r$ is embedded in~$V_1$, as required.

We now extend~$\phi$ to an~embedding of~$T$ in~$G$ which covers~$B$.
Let~$b\deq |B|\leq \psi n$, and let~$q_1,\ldots,q_b$ be the vertices of~$B$. 
Also let~$p\in[k]$ be such that~$b \equiv p\bmod k$, and for each~$i\in[k]$
choose~$W_i\subseteq W$ such that~$\phi(W_i)\subseteq\phi(W)\cap V_i'$ and so that
$|W_i| = \lceil b/k\rceil$ if~$i \in [p]$ and~$|W_i|= \lfloor b/k\rfloor$ if~$i\in[k]\setminus[p]$. 
(Since~$b/k \leq \psi n/k$ and~$\psi \ll \beta$,~\eqref{e:tbound} ensures that we can indeed choose 
such sets.) The sets~$W_1,\ldots,W_k$ are then vertex-disjoint and~$|\bigcup_\ink W_i| = b$, so
by relabelling if necessary we may assume that~$\bigcup_\ink W_i =\{w_1,\ldots,w_b\}$.
For each~$j\in[t]$ set~$p_j \deq \phi(w_j)$ and write~$i_j$ to 
denote the index such that~$p_j \in V_{i_j}$.
Greedily choose~$2t$~distinct vertices~$c_1^-,c_1^+,\ldots,c_t^-,c_t^+$ so that for each~$j \in [t]$ 
we have that~$c_j^-,c_j^+\in Y_{i_j}$, that~$c_j^-$ is an inneighbour of~$p_j$
and that~$c_j^+$ is an outneighbour of~$p_j$. It is possible to make such choices since for each~\ink\
there are at most~$2t/k$ vertices~$w_j$ with~$i_j = i$ by~\eqref{e:tbound},
and because for each~$j \in [t]$ we have~$p_j \in V'_{i_j}$ (since~$w_j$ is a vertex of~$T'$), 
so the~semidegree of~$p_j$ in~$Y_{i_j}$
is at~least~$4 \beta m \geq 2\cdot (2t/k)$ by our choice of the sets~$Y_i$.

Recall that each vertex in~$W$ is adjacent to precisely one removed in-leaf~$w_j^-$ of~$T$ and one removed 
out-leaf~$w_j^+$ of~$T$, and that these leaves have not yet been embedded.
For each~$s \in [b]$ we embed one of these leaves to the vertex~$q_s$ and the other to either
$c_s^-$ or~$c_s^+$ according to the direction of the edge between~$q_s$ and~$p_s$. 
For each~$b+1 \leq s \leq t$ we then embed the in-leaf of~$w_s$ to~$c_s^-$ and the 
out-leaf of~$w_s$ to~$c_s^+$.
More precisely,
for all integers~$s$ with~$1\leq s\leq b$
we set~$\phi(w_s^-)\deq q_s$ and~$\phi(w_s^+)\deq c_s^+$ if~$q_s\rarr p_s\in E(G)$,
and set~$\phi(w_s^+)\deq q_s$ and~$\phi(w_s^-)\deq c_s^-$ if~$q_s\larr p_s \in E(G)$.
Then, for all integers~$s$ with~$b < s \leq  t$
we set~$\phi(w_s^-) \deq  c_s^-$ and~$\phi(w_s^+)\deq c_s^+$.
Following this extension~$\phi$ is an embedding of~$T$ in~$G$ which covers every vertex in~$B$.
Moreover, for each~$i \in [k]$ the number of vertices embedded in the cluster~$V_i$ is
\[
\bigl|\phi\bigl(V(T)\bigr) \cap V_i\bigr| = \left(\frac{n-2t}{k} \pm \frac{3\delta n}{2}\right)
+ 2\left(\frac{t}{k} \pm \frac{3\delta t}{2}\right) 
- \left(\frac{b}{k} \pm 1 \right)
=
\bigl(n-|B|\bigr)\left(\frac{1}{k}\pm 2\delta\right)
\]
where the first term counts the number of vertices of~$T'$ embedded in~$V_i$ (see \eqref{e:wbound}), 
and the second and third terms count the number of removed leaves embedded in~$V_i$.
Indeed, by~\eqref{e:tbound} there are~$t/k \pm 3\delta t/2$ vertices of~$W$ embedded in~$V_i$, 
each of which is adjacent to two removed leaves, and these removed leaves are each embedded 
in~$V_i$ except for the~$\lfloor b/k\rfloor$ or~$\lceil b/k\rceil$ leaves embedded in~$B$.
\end{proof}

\subsection{Embedding the second subtree}\label{s:2nd}

Recall from the outline at the beginning of this section that, following the embedding of the 
first subtree~$T_1$, the vertices which remain uncovered form a super-regular \CCT. We wish to 
embed the second subtree~$T_2$ so that all of these vertices are covered. The following lemma
demonstrates that this is possible.

\begin{lemma}\label{l:2nd-half}
Suppose that~$1/n \ll 1/C$ and that~$1/n \ll 1/k \ll \eps \ll d \ll \beta$. 
Let~$T$ be an~oriented tree on~$n$ vertices with root~$r$, with 
maximum degree~$\Delta(T)\leq (\log n)^C$, and which contains at~least~$\beta n$~distinct 
vertices that are each adjacent to at least one in-leaf and at least one out-leaf of~$T$.
Let~$G$ be a~$(d,\eps)$-super-regular \CCT\ on~$n$ vertices whose
clusters~$V_1,\ldots, V_k$ each have size~$\frac{n}{k} \pm \frac{2n}{\lln}$,
and let~\vstar\ be a vertex of~$V_1$.
Then~$G$ contains a (spanning) copy of~$T$ 
in which~$r$ is embedded to~$v$.
\end{lemma}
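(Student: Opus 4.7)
The plan is to follow the outline sketched in the introduction for the second subtree. First I would fix a set~$L \subseteq V(T)$ of~$\lceil \beta n \rceil$ vertices each adjacent to at least one in-leaf and one out-leaf of~$T$ (which exists by hypothesis), designate for each~$x\in L$ one such in-leaf~$w_x^-$ and one such out-leaf~$w_x^+$, and form the subtree~$T' \deq T - \{w_x^-,w_x^+ : x\in L\}$ on~$n-2|L|$ vertices. Split~$L = L^*\cup L^\circ$ with~$L^\circ$ a small ``absorbing'' subset of size about~$n/(\lln)^{1/2}$ and~$|L^*|\geq\beta n/2$. Then root~$T'$ at~$r$ and apply the Vertex Allocation Algorithm (Algorithm~\ref{a:alloc}). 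By Lemma~\ref{l:3.3-stren}\ref{l:3.3-stren-i} and two applications of Lemma~\ref{l:3.3-stren}\ref{l:3.3-stren-iv} (with~$S=V(T')$ and with~$S=L^*$), with positive probability the allocation is semi-canonical, each cluster~$V_i$ receives~$(n-2|L|)(1/k\pm 1/\lln)$ vertices of~$T'$, and each cluster receives~$|L^*|(1/k\pm 1/\lln)$ vertices of~$L^*$. Fix such an outcome.

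Next, choose an auxiliary constant~$\alpha$ with~$d\ll\alpha\ll\beta$ and a target~$m$ slightly smaller than~$n/k$ so that Lemma~\ref{l:approx-embedding} applies: using~$|V_i|\geq n/k-2n/\lln$ one can arrange~$(1+\alpha)m\leq|V_i|$ for every~\ink, while the upper bound~$(1+\alpha/2)m$ on the allocation of~$T'$-vertices to any cluster is respected because~$|L|\geq\beta n$ creates a sizeable gap between~$|T'|/k$ and~$|V_i|$. Lemma~\ref{l:approx-embedding} then yields an embedding~$\phi$ of~$T'$ in~$G$ with~$\phi(r)=\vstar$ and each vertex of~$T'$ embedded in its allocated cluster.

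Write~$P_i^*\deq\phi(L^*)\cap V_i$ and~$P_i^\circ\deq\phi(L^\circ)\cap V_i$, and set~$U_i^{(0)}\deq V_i\setminus\phi(V(T'))$, so~$|P_i^*| = |L^*|/k \pm O(n/\lln)$ and~$|U_i^{(0)}| = 2|L|/k \pm O(n/\lln)$. I would now use~$L^\circ$ as an absorbing buffer: for each~$x\in L^\circ$ with~$\phi(x)=p\in V_i$, the leaf~$w_x^-$ can be placed at an uncovered inneighbour of~$p$ either in~$V_{i-1}$ (canonical) or in~$V_i$ (``same cluster''), and analogously~$w_x^+$ can be sent to~$V_{i+1}$ or kept in~$V_i$. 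Both options exist throughout the greedy process: super-regularity of~$G[V_{i-1}\rarr V_i]$ supplies canonical inneighbours in abundance, while Lemma~\ref{l:semidegree} applied to the tournament~$G[V_i]$ supplies same-cluster inneighbours, and symmetrically for outneighbours. Since~$|L^\circ|\gg n/\lln$ I would use these~$4|L^\circ|$ binary choices to correct the~$O(n/\lln)$ discrepancies so that, after this step, the remaining uncovered sets~$U_i\subseteq V_i$ satisfy~$|U_i|=|P_{i-1}^*|+|P_{i+1}^*|$ for every~\ink, and partition each~$U_i=U_i^-\cup U_i^+$ with~$|U_i^-|=|P_{i+1}^*|$ and~$|U_i^+|=|P_{i-1}^*|$.

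To finish, I would find perfect matchings in the bipartite graphs~$G[U_{i-1}^-\rarr P_i^*]$ and~$G[P_i^*\rarr U_{i+1}^+]$ for each~\ink. The Slicing Lemma (Lemma~\ref{l:slice-pair}) preserves regularity when passing from the pair~$(V_{i-1},V_i)$ to these slightly smaller subsets; after trimming a negligible set of vertices of low induced degree (which are re-absorbed via a small reserve inside~$L^\circ$), super-regularity is restored and Lemma~\ref{l:matching} supplies the desired matchings. For each~$x\in L^*$ with~$\phi(x)=p\in P_i^*$, embedding~$w_x^-$ and~$w_x^+$ to the partners of~$p$ in~$U_{i-1}^-$ and~$U_{i+1}^+$ respectively then extends~$\phi$ to a spanning embedding of~$T$ in~$G$. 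The main obstacle will be implementing the absorbing step cleanly: the~$k$ discrepancies are mutually coupled (relocating an~$L^\circ$-leaf from~$V_i$ to~$V_{i-1}$ simultaneously changes~$|U_i|$ and~$|U_{i-1}|$), so one must verify that a suitable sequence of local corrections exists. This can be handled by a cycle-balancing argument using the fact that the discrepancies sum to~$0$ because~$\sum_i |V_i| = n$, combined with the abundant per-cluster supply of~$L^\circ$-vertices.
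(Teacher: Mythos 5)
Your overall architecture matches the paper's: delete one in-leaf and one out-leaf at $\approx\beta n$ vertices, allocate and approximately embed the pruned tree via Algorithm~\ref{a:alloc}, Lemma~\ref{l:3.3-stren} and Lemma~\ref{l:approx-embedding}, then repair the $O(n/\lln)$ per-cluster imbalances using the binary choice (same cluster versus canonical) that the tournament structure affords, and finish with perfect matchings in super-regular bipartite graphs via Lemmas~\ref{l:slice-pair} and~\ref{l:matching}. However, there is a genuine gap: you embed $T'$ into \emph{all} of~$G$ and only afterwards ask for uncovered in/out-neighbours of the embedded images~$p=\phi(x)$. Super-regularity of $G[V_{i-1}\rarr V_i]$ only guarantees $\deg^-(p,V_{i-1})\geq(d-\eps)|V_{i-1}|$, and since $d\ll\beta$ this neighbourhood can lie entirely inside the covered part of~$V_{i-1}$ (which has size about $(1-2\beta)|V_{i-1}|$); likewise Lemma~\ref{l:semidegree} controls the semidegree of~$p$ in the whole tournament $G[V_i]$, not in the leftover uncovered set $U_i$, which is only a $2\beta$-fraction of~$V_i$. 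So neither the ``canonical'' nor the ``same-cluster'' option in your absorbing step is justified, and the same problem undermines the super-regularity you need for the final matchings: there is no guarantee that every $p\in P_i^*$ has \emph{any} uncovered in-neighbours in $U_{i-1}$ or out-neighbours in $U_{i+1}$.

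This is exactly what the paper's Claim~\ref{c:reserve-XY} is for: \emph{before} the approximate embedding one randomly reserves sets $X_i,Y_i\subseteq V_i$ (and discards the few low-semidegree vertices $D_i$), shows by hypergeometric concentration that every surviving vertex of $V_i$ has $\geq\eta m$ in/out-neighbours in $X_{i\mp1},Y_{i\mp1}$ and large semidegree in $X_i$, and then embeds $T'$ only into $G[V_1'\cup\dots\cup V_k']$, keeping $X_i\cup Y_i$ uncovered. Without such a reservation your later steps have nothing to draw on. A second, related problem is your suggestion to ``trim a negligible set of vertices of low induced degree'' before applying Lemma~\ref{l:matching}: the embedding must be spanning, so uncovered vertices with too few neighbours in $P_{i\mp1}^*$ cannot be discarded — they must be covered, which the paper does in its Step~1 by pairing each such bad vertex $b$ with an embedded centre $p$ and exploiting the tournament edge between $b$ and $p$ together with the reserved semidegree of $p$ in $X_i$; this again presupposes the reservation. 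Your cycle-balancing idea for the coupled discrepancies is essentially the paper's balancing algorithm and is fine in spirit, but as written the degree facts it relies on at every step are unsupported.
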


Loosely speaking, the proof of Lemma~\ref{l:2nd-half} begins by removing a small
number of in-leaves and out-leaves of~$T$ to obtain a subtree~$T'$. 
We then select small disjoint subsets~$X_i$ and~$Y_i$ of~$V_i$ for each~\ink\
with the property that each vertex in~$V_i$ has
many inneighbours in each of~$X_\imm$ and~$Y_\imm$ and
many outneighbours in each of~$X_\ipp$ and~$Y_\ipp$,
and so that most vertices in~$V_i$
have large semidegree in~$X_i$.
Removing these sets from~$G$ yields a subgraph~$G'$ of~$G$ which
is a regular \CCT, and we embed~$T'$ in~$G'$ 
using Lemmas~\ref{l:3.3-stren} and~\ref{l:approx-embedding}. It remains
to embed the removed leaves of~$T$ so as to cover all vertices of~$G$ which
remain uncovered. We first use the fact that the image of each vertex of~$T'$ embedded 
in~$V_i$ has large semidegree in~$X_i$ to embed a small number of removed 
leaves to equalise the numbers of uncovered vertices in each cluster and 
the numbers of removed leaves needing to be embedded in that cluster, 
before completing the embedding by using the super-regularity of~$G$ to 
find perfect matchings in appropriate auxiliary bipartite graphs. 

\begin{proof}
Introduce new constants~$\eta$ and~$\gamma$ such that
$\eps \ll\eta \ll \gamma \ll d$. Also define~$\delta \deq  {\frac{2}{\lln}}$ 
and~$m \deq \frac{n}{k}$, so each cluster has size~$m \pm \delta n$, assume without 
loss of generality that~$\beta \leq \frac{1}{4}$, and let~$t \deq \lceil \beta n \rceil -1$.
Choose a set~$W$ of~$t$ distinct vertices of~$T$ 
so that each~$w \in W$ is adjacent to at least one in-leaf of~$T$ and at least one out-leaf of~$T$ 
and so that~$r$ is neither in~$W$ nor a leaf of~$T$ which is adjacent to a vertex of~$W$
(our assumption on~$T$ ensures that we can choose such a set~$W$).
Let~$T'$ be the oriented tree formed by deleting from~$T$ precisely one in-leaf and 
one out-leaf adjacent to each vertex of~$W$, and take~$r$ to be the root of~$T'$.
Observe that~$T'$ then has precisely~$n-2t$ vertices and 
maximum degree~$\Delta(T') \leq \Delta(T) \leq (\log n^C) \leq \bigl(\log (n-2t)\bigr)^{2C}$; 
in other words,~$T'$ meets the conditions of Lemma~\ref{l:approx-embedding} 
with~$n-2t$ and~$2C$ in place of~$n$ and~$C$ respectively.
We will embed~$T'$ in an appropriate subgraph of~$G$, which we find by using the following claim.

\begin{claim}\label{c:reserve-XY}
For each~$i \in [k]$ there exist sets~$F_i, X_i, Y_i \subseteq V_i$ with~$X_i, Y_i \subseteq F_i$ 
such that, writing~$V_i' \deq V_i\setminus F_i$, we have
\begin{enumerate}[label=(\roman*)]
\item $|F_i| \leq 3 \gamma m$,
\item $X_i$ and~$Y_i$ are disjoint, and~$v \in V'_1$,
\item for each~$x\in V_i'\setminus\{\vstar\}$ we have~$\deg^0(x,X_i) \geq \eta m$, and \label{i:deg-X}
\item for each~$x\in V_i$ 
we have~$\deg^-(x,X_\imm), \deg^-(x,Y_\imm), \deg^+(x,X_\ipp), \deg^+(x,Y_\ipp) \geq \eta m$.\label{i:deg-Y}
\end{enumerate}
\end{claim}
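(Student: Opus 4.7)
The plan is to produce $X_i$, $Y_i$, and $F_i$ by a~random selection argument in the spirit of the samplings used in Lemmas~\ref{l:embed-nice-in-cut} and~\ref{l:1st-half}. For each $i\in[k]$ I would first apply Lemma~\ref{l:semidegree} to the tournament $G[V_i]$ with threshold proportional to~$\gamma m$ to obtain a~set $B_i\subseteq V_i$ of size at~most~$\gamma m$ such that every vertex of $V_i\setminus B_i$ has semidegree at~least~$\gamma m/2$ inside~$V_i$; this lower bound is what will let the within-cluster concentration step below succeed. I~would then choose $X_i$ and~$Y_i$ to be two disjoint uniformly random subsets of $V_i\setminus B_i$, each of size~$\gamma m$, with the additional constraint when $i=1$ that $\vstar\notin X_1\cup Y_1$. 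Setting $F_i\deq X_i\cup Y_i\cup(B_i\setminus\{\vstar\})$ then immediately delivers~(i) and~(ii): $|F_i|\leq 3\gamma m$, the sets $X_i$ and~$Y_i$ are disjoint and contained in~$F_i$, and $\vstar\in V_1'\deq V_1\setminus F_1$.

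The work lies in verifying the degree conditions~(iii) and~(iv). For~(iv), super-regularity of~$G[V_i\rarr V_\ipp]$ gives $\deg^+(x,V_\ipp)\geq(d-\eps)|V_\ipp|$ for every $x\in V_i$, and symmetric bounds hold for inneighbours and for the cluster~$V_\imm$. Hence, for the random choice of $X_\ipp\subseteq V_\ipp\setminus B_\ipp$ of size~$\gamma m$, the variable $\deg^+(x,X_\ipp)$ is hypergeometric with expectation of order $d\gamma m$, which is much larger than $\eta m$ since $\eta\ll d\gamma$. Theorem~\ref{t:exp} then bounds the probability that $\deg^+(x,X_\ipp)<\eta m$ by a~quantity exponentially small in~$n$, and the same applies to the other three degrees appearing in~(iv).

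Condition~(iii) is analogous but uses the within-cluster semidegree bound secured in the first step: for $x\in(V_i\setminus B_i)\setminus(X_i\cup Y_i\cup\{\vstar\})$ we have $\deg^\pm(x,V_i)\geq\gamma m/2$, so $\deg^\pm(x,X_i)$ is hypergeometric with expectation of order $\gamma^2 m\gg\eta m$. Theorem~\ref{t:exp} again gives an exponentially small failure probability, and a~single union bound over all requirements across all clusters shows that with probability $1-\littleo(1)$ the random choice satisfies every condition, so a~suitable choice of the sets exists. The main obstacle is simply the constant bookkeeping, ensuring that $\eta\ll\gamma\ll d$ supplies enough slack to absorb the various concentration errors and the $\eps$-slack in the super-regularity; the only mildly delicate point is the careful handling of the distinguished vertex~$\vstar$, which must be excluded from $X_1\cup Y_1$ (and is excepted from~(iii)) so that $\vstar\in V_1'$.
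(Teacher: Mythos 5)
Your strategy is essentially the paper's (set aside low-semidegree vertices of each cluster tournament via Lemma~\ref{l:semidegree}, randomly sample $X_i$ and~$Y_i$, verify the degree conditions by hypergeometric concentration via Theorem~\ref{t:exp} and a union bound, and treat $\vstar$ separately), but one deviation creates a genuine gap in your verification of~\ref{i:deg-X}: you sample $X_i,Y_i$ from $V_i\setminus B_i$, whereas the only information you have about a vertex $x\in V_i\setminus B_i$ is that $\deg^0(x,V_i)$ exceeds the threshold, which says nothing about $\deg^{\pm}(x,V_i\setminus B_i)$. Lemma~\ref{l:semidegree} with threshold $t$ only bounds the number of bad vertices by $4t-2>t$, so the entire inneighbourhood of a vertex that barely clears the threshold may lie inside~$B_i$. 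Concretely, take $S\subseteq V_i$ of size $\lceil\gamma m/2\rceil$ with every edge between $S$ and $V_i\setminus S$ oriented away from~$S$: each vertex of $S$ has indegree at most $|S|-1<\gamma m/2$ in $V_i$, so $S\subseteq B_i$, while a vertex $x\in V_i\setminus S$ which beats all of $V_i\setminus(S\cup\{x\})$ has $\deg^-(x,V_i)=|S|$ and large outdegree, hence $x\notin B_i$, yet $N^-(x)\cap(V_i\setminus B_i)=\emptyset$. Nothing in the hypotheses of Lemma~\ref{l:2nd-half} forbids such a cluster tournament (super-regularity constrains only edges between consecutive clusters), and with probability at least $1-3\gamma$ this $x$ avoids $X_i\cup Y_i$, lies in $V_i'$, and then $\deg^-(x,X_i)=0$, so~\ref{i:deg-X} fails no matter how the remaining randomness turns out. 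Thus your claim that $\deg^{\pm}(x,X_i)$ has expectation of order $\gamma^2m$ is unjustified and false in general; this defect persists for any choice of threshold, since $|B_i|$ can always exceed it. (Your argument for~\ref{i:deg-Y} is unaffected, because there the relevant degree bound $(d-\eps)|V_{i\pm1}|\geq dm/2$ dwarfs $|B_{i\pm1}|=O(\gamma m)$.)

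The repair is precisely the paper's choice: sample the disjoint sets $X_i,Y_i$ of size $\lfloor\gamma m\rfloor$ uniformly from \emph{all} of~$V_i$. It is harmless if they meet the low-semidegree set, since those vertices are swept into $F_i$ regardless, and then for every $x\in V_i$ outside the bad set the variables $\deg^{\pm}(x,X_i)$ really are hypergeometric with expectation at least roughly $\gamma^2m/6\geq 2\eta m$, after which Theorem~\ref{t:exp} and your union bound finish the argument; the paper asks for degrees at least $\eta m+1$ so that deleting $\vstar$ from $X_1\cup Y_1$ afterwards is safe, and your a priori exclusion of $\vstar$ is an acceptable alternative. Separately, a minor quantitative slip: with threshold $\gamma m/2$, Lemma~\ref{l:semidegree} yields only $|B_i|\leq 2\gamma m-2$, not $|B_i|\leq\gamma m$, which would also break the bound $|F_i|\leq 3\gamma m$ in~(i); taking the threshold to be $\gamma m/5$, as the paper does, fixes both counts.
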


\begin{claimproof}
For each \ink\ let~$D_i\subseteq V_i$ consist of all vertices~$x \in V_i$ with~$\deg^0(x, V_i) < \gamma m/5$,
 so~$|D_i| \leq \gamma m$ by~Lemma~\ref{l:semidegree}. 
Then every vertex~$x \in V_i \sm D_i$ has~$\deg^0(x, V_i) \geq \gamma m/5$. 
Also, since~$V_1, \dots, V_k$ are the clusters of a~$(d, \eps)$-super-regular \CCT, 
every vertex~$x \in V_i$ has at least~$(d - \eps)|V_\imm| \geq dm/2$ inneighbours in~$V_\imm$ 
and at least~$(d-\eps)|V_\ipp| \geq dm/2$ outneighbours in~$V_\ipp$.
For each~\ink\ choose disjoint subsets~$X_i, Y_i \subseteq V_i$ 
with~$|X_i| = |Y_i| =\lfloor \gamma m \rfloor$ uniformly at random and independently of all other choices.
Then for each~\ink\ and each~$x \in V_i$ the random 
variables~$\deg^-(x, X_\imm), \deg^-(x, Y_\imm), \deg^+(x, X_\ipp)$ and~$\deg^+(x, Y_\ipp)$ each 
have hypergeometric distribution with expectation 
at least~$(dm/2)\lfloor \gamma m \rfloor/(m+\delta n) \geq d\gamma m/3 \geq 2\eta m$; if additionally~$x \in V_i\setminus D_i$,
then the random variables~$\deg^+(x, X_i)$ and~$\deg^-(x, X_i)$ each have hypergeometric distribution with 
expectation at least~$(\gamma m/5)\lfloor \gamma m \rfloor/(m + \delta n) \geq \gamma^2 m/6 \geq 2\eta m$. The probability that any given 
one of these random variables is less than~$\eta m+1$ therefore declines exponentially with~$n$ by 
Theorem~\ref{t:exp}, so by taking a union bound over all of these at most~$6n$ events we find that with 
positive probability none of these random variables is less than~$\eta m+1$. Fix a choice of the sets~$X_i$ 
and~$Y_i$ with this property, then removing~$\vstar$ from the sets~$X_1$ and~$Y_1$ if necessary and taking~$F_1 \deq (X_1 \cup Y_1 \cup D_1) \sm \{\vstar\}$ 
and~$F_i \deq X_i \cup Y_i \cup D_i$ for each~$2 \leq i \leq k$ gives the desired sets. 
\end{claimproof}

Fix sets~$F_i, X_i, Y_i$ and~$V_i'$ as in Claim~\ref{c:reserve-XY}, and observe that
\[|V'_i| = |V_i| - |F_i| \geq  m - \delta n - 3\gamma m \geq \left(1-\frac{\beta}{4}\right) m 
\geq \left(1+\beta\right)\frac{n-2t}{k}.\]
Let~$G'\deq G[V_1'\cup\cdots\cup V_k']$, and note that
by Lemma~\ref{l:slice-pair}~$G'$ is then a~$(d,2\eps)$-regular \CCT\ with clusters~$V_1',\ldots,V_k'$. 
Observe also that~$\vstar$ has at least~$(d-\eps)|V_2| - |F_2| \geq \gamma m$ outneighbours in~$V'_2$ and 
at least~$(d-\eps)|V_k| - |F_k| \geq \gamma m$ inneighbours in~$V'_k$.
In other words,~$G'$ meets the conditions of Lemma~\ref{l:approx-embedding} with~$n-2t$,~$\beta$ 
and~$2\eps$ in place of~$n$,~$\alpha$ and~$\eps$ respectively
(so, in particular,~$m$ there corresponds to~$m - 2t/k$ here).

Apply the Vertex Allocation Algorithm (Algorithm~\ref{a:alloc}) to allocate the vertices of~$T'$ to the 
clusters~$V'_1, \dots, V'_k$ of~$G'$. For each~$i \in [k]$ let~$T'_i$ consist of all vertices of~$T'$ 
allocated to the cluster~$V'_i$, and likewise let~$W_i \subseteq W$ consist of all vertices of~$W$ 
allocated to the cluster~$V'_i$. By Lemma~\ref{l:3.3-stren}\ref{l:3.3-stren-i} the allocation we obtain from 
the Vertex Allocation Algorithm will be semi-canonical. Furthermore, by two applications of 
Lemma~\ref{l:3.3-stren}\ref{l:3.3-stren-iv} (with~$n-2t$,~$2C$ and~$\beta/2$ in place of~$n$,~$C$ 
and~$\alpha$ respectively) we find with probability~$1-\littleo(1)$ that for every~$i \in [k]$ we have
\begin{equation} \label{e:wuniform}
|W_i| \deq |W| \left(\frac{1}{k} \pm \frac{1}{\log \log (n-2t)}\right) = \frac{t}{k} \pm \delta n
\end{equation}
and 
\begin{equation} \label{e:tuniform}
|T'_i| \deq (n-2t)\left(\frac{1}{k} \pm \frac{1}{\log \log (n-2t)}\right) = m - \frac{2t}{k} \pm \delta n.
\end{equation}
Fix an allocation with these properties, and observe that this allocation then meets the conditions of 
Lemma~\ref{l:approx-embedding} with~$n-2t$ and~$\beta$ in place of~$n$ and~$\alpha$ respectively. 
So we may apply Lemma~\ref{l:approx-embedding} to obtain an embedding~$\phi$ of~$T'$ in~$G'$
such that each vertex of~$T'$ is embedded in the cluster to which it was allocated and 
so that~$\phi(r) = \vstar$.

For each~\ink, let~$U_i \subseteq V_i$ be the set of vertices of~$V_i$ not covered by~$\phi$
and let~$U \deq \bigcup_\ink U_i$. Then, since every vertex was embedded in 
the cluster to which it was allocated, by~\eqref{e:tuniform} we have for each~\ink\ that 
\begin{equation} \label{e:uuniform}
|U_i| = |V_i| -|T'_i| = (m \pm \delta n) - \left(m - \frac{2t}{k} \pm \delta n\right) 
= \frac{2t}{k}\pm 2\delta n,
\end{equation}
and since~$|G| = n$ and~$|T'| = n-2t$ we have~$|U| = 2t$. 
Also, for each~\ink, let~$P_i \deq \phi(W_i)$ and write~$P \deq \bigcup_\ink P_i$. 
In other words,~$P_i$ (respectively~$P$) is the is the set of vertices of~$G$ to which 
vertices of~$W_i$ (respectively~$W$) were embedded. 
So~$P_i \subseteq V'_i$ and~$|P_i| = |W_i|$, and similarly~$|P| = |W| = t$.

Our goal for the remaining part of the proof is to choose, 
for each~$x \in P$, an inneighbour~$x^-$ of~$x$ in~$U$ and an outneighbour~$x^+$ of~$x$ in~$U$ such that
the chosen inneighbours and outneighbours are all distinct. Indeed, for each vertex~$w \in W$ there
is a unique vertex~$x \in P$ with~$\phi(w) = x$. Let~$w^+$ and~$w^-$ denote the out-leaf and in-leaf 
adjacent to~$w$ which we removed when forming~$T'$; we could then embed~$w^+$ to~$x^+$ and~$w^-$ to~$x^-$, 
and doing so for each~$w \in W$ would extend~$\phi$ to an embedding of~$T$ in~$G$, completing the proof.
If for every~$\ink$ both 
$G[U_\imm \rarr P_i]$ and~$G[P_i\rarr U_\ipp]$ are super-regular 
and~$|U_i| = |P_\imm|+|P_\ipp|$, then (after appropriately partitioning 
the sets~$U_i$) we could apply Lemma~\ref{l:matching} to find, for each~$i \in [k]$ and each~$x \in P_i$, 
vertices~$x^- \in U_\imm$ and~$x^+ \in U_\ipp$ satisfying the above properties. 
However, neither of these assumptions is necessarily valid. Over the following steps of the proof we embed 
the removed leaves adjacent to a small number of vertices of~$W$ so that these assumptions do indeed hold 
for the remaining vertices; we then complete the embedding of~$T$ in~$G$ in the manner described above.

\medskip
\emph{Step 1: Balancing the sets~$W_i$ and ensuring super-regularity.} The first step of this process is to 
embed the removed leaves adjacent to a small number of vertices of~$W$ so that equally many vertices in each 
set~$W_i$ have not had their adjacent removed leaves embedded. We also cover all vertices in each set~$U_i$ 
which have too few inneighbours in~$P_\imm$ or too few outneighbours in~$P_\ipp$; this will ensure that the 
auxiliary bipartite graphs which we consider at the end of the proof are super-regular.

For each~$\ink$ define~$s_i \deq \lfloor 4\eps m\rfloor + |W_i| - \min_{i \in [k]} |W_i|$, so 
by~\eqref{e:wuniform} we have~$\lfloor 4\eps m\rfloor \leq s_i \leq 4\eps m + 2\delta n$.
Also, for each~\ink, 
let~$B_i^-$ be the set of~vertices in~$U_i$ with fewer than~$\eta m$ inneighbours in~$P_\imm$, and
let~$B_i^+$ be the set of~vertices in~$U_i$ with fewer than~$\eta m$ outneighbours in~$P_\ipp$.
Since~$G[V_\imm \rarr V_i]$ is~$(\dplus,\eps)$-regular, and~$|P_\imm| = |W_\imm| \geq t/k - \delta n > \eps |V_\imm|$ by~\eqref{e:wuniform}, 
we must have~$|B_i^-| \leq \eps |V_i| \leq 2 \eps m$; 
likewise, since~$G[V_i\rarr V_\ipp]$ is~$(\dplus,\eps)$-regular 
and~$|P_\ipp| > \eps |V_\ipp|$, we must 
have~$|B_i^+| \leq \eps|V_i| \leq 2 \eps m$. So we may choose for 
each~\ink\ a subset~$B_i \subseteq U_i$ of size~$|B_i| = s_i$ with~$B_i^-\cup B_i^+ \subseteq B_i$.

Next, for each~\ink\ we proceed as follows. 
Let~$\{b_1,\ldots,b_{s_i}\}$ be the vertices in~$B_i$, arbitrarily choose distinct
vertices~$w_1, \dots, w_{s_i} \in W_i$, and for each~$j \in [s_i]$ let~$p_j \deq \phi(w_j)$, 
so~$p_j \in P_i$. Since~$W \subseteq V(T') \sm \{r\}$, for each~$j \in [s_i]$ the vertex~$p_j$ 
was embedded in~$V'_i \sm \{v\}$, so by Claim~\ref{c:reserve-XY}\ref{i:deg-X} 
we have~$\deg^0(p_j, X_i\setminus B_i) \geq \deg^0(p_j,X_i) - |B_i|= \eta m - s_i \geq s_i$. 
We may therefore choose distinct vertices~$x_1,\ldots, x_{s_i}$ in~$X_i\setminus B_i$ 
such that for each~$j\in[s_i]$,
the vertex~$x_j$ is an~inneighbour of~$p_j$ if~$b_j\in N^+(p_j)$, 
whilst~$x_j$ is an~outneighbour of~$p_j$ if~$b_j\in N^-(p_j)$.
For each~$j \in [s_i]$ let~$w_j^+$ be the removed out-leaf of~$T$ adjacent to~$w_j$ and 
let~$w_j^-$ be the removed in-leaf of~$T$ adjacent to~$w_j$. If~$b_j\in N^+(p_j)$ then 
we set~$\phi(w_j^+) = b_j$ and~$\phi(w_j^-) = x_j$, whilst if~$b_j\in N^-(p_j)$ then we 
set~$\phi(w_j^-) = b_j$ and~$\phi(w_j^+) = x_j$. Observe that our choice of vertices~$x_1, \dots, x_{s_i}$ 
ensures that these embeddings are consistent with the directions of the 
edges~$w_j^- \rarr w_j$ and~$w_j \rarr w_j^+$.

Having carried out these steps for each \ink\ we have extended the embedding~$\phi$ to cover
all vertices in~$B_1\cup\cdots\cup B_k$. For each \ink\ we now 
define~$W^0_i \deq W_i \sm \{w_1, \dots w_{s_i}\}$ and~$P^0_i \deq P_i \sm \{p_1, \dots, p_{s_i}\}$. 
In other words,~$W^0_i$ is the set of vertices of~$W$ 
which were embedded in~$V'_i$ and whose adjacent removed leaves have not yet been embedded, 
and~$P^0_i$ is the set of vertices of~$G$ to which vertices of~$W^0_i$ 
have been embedded. 
By~\eqref{e:wuniform} we then have 
\begin{equation}\label{e:Wi-after-step1}
|P^0_i| = |W_i^0| = |W_i| - s_i = \min_{i \in [k]} |W_i| - \lfloor 4\eps m\rfloor 
= \frac{t}{k} - 4\eps m \pm \delta n,
\end{equation}
so in particular we have~$|W^0_1| = \dots = |W^0_k| = |P^0_1| = \dots = |P^0_k|$. 
Similarly, for each \ink\ we define~$U^0_i \deq U_i \sm \{b_1,x_1,\ldots,b_{s_i},x_{s_i}\}$. 
In other words,~$U^0_i$ is the set of vertices of~$V_i$ which have not yet been covered by~$\phi$. 
By \eqref{e:uuniform} we then have
\begin{equation}\label{e:Ui-after-step1}
|U^0_i| = |U_i| - 2s_i = \frac{2t}{k} - 8\eps m \pm 6\delta n.
\end{equation}
Write~$W^0 \deq \bigcup_\ink W_i^0$,~$P^0 \deq \bigcup_\ink P^0_i$, and~$U^0 \deq \bigcup_\ink U_i^0$.
So in particular~$U^0$ is the set of vertices of~$G$ which remain uncovered. 
Since there are two such vertices for each vertex of~$W^0$, and~$|W^0_1| = |W^0_2| = \dots = |W^0_k|$, 
it follows that~$|U^0|$ is divisible by~$2k$.

\medskip\emph{Step 2: Balancing the numbers of uncovered vertices.} 
Our next step is to embed the removed leaves adjacent to a small number of vertices of~$W$ so that, 
following these embeddings, there are equally many uncovered vertices within each cluster (we also
preserve the properties ensured in Step 1).
\begin{figure}[tb] 
  \begin{center}
    \includegraphics{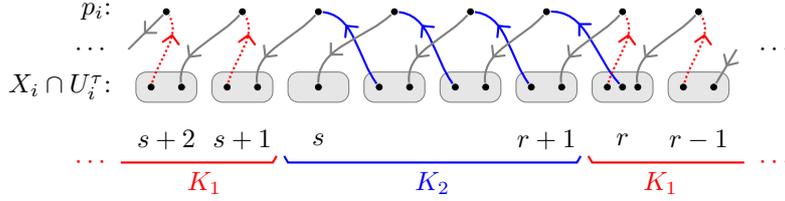}

    \caption{%
    This diagram illustrates how the embedding~$\phi$ is extended at each step of the balancing algorithm.
    The vertices at the top are the vertices~$p_1,\ldots,p_k$, which lie in the 
    sets~$P_1^\tau,\ldots,P_k^\tau$
	respectively,
	and the shaded areas represent the sets~$X_i \cap U_i^\tau$ for~$i \in [k]$ (that is, the vertices
	of~$X_i$ not yet covered by~$\phi$).
	The extension of~$\phi$ at step~$\tau$ then covers the vertices appearing in the shaded areas, so
	three extra vertices are covered from~$V_r$, one from~$V_s$, and two from each other cluster. 
}
    \label{fi:balancing}
  \end{center}
\end{figure} 
We achieve this by applying the following `balancing algorithm'. 
Each iteration of this algorithm will 
extend~$\phi$ by embedding, for each~\ink, 
the removed in-leaf and out-leaf adjacent to some vertex in~$W_i$.

More precisely, the balancing algorithm proceeds as follows.
For each time~$\tau \geq 0$
and for each~\ink,  
we let~$W_i^\tau \subseteq W_i$ be the set of vertices of~$T$ 
whose adjacent removed leaves have not yet been embedded, 
we let~$P_i^\tau \subseteq P_i$ be the set of vertices of~$G$ to which vertices of~$W_i^\tau$ have been
embedded, and
we let~$U_i^\tau \subseteq V_i$ be the set of uncovered vertices in~$V_i$ at~time~$\tau$.
Observe that these definitions of~$W_i^0$,~$P_i^0$ and~$U_i^0$ coincide with those given above.
We~also define the quantity~$M^\tau \deq \frac{1}{k} \sum_{i\in [k]}|U_i^\tau|$, so~$M^\tau$ is the
average number of uncovered vertices per cluster at time~$\tau$. Our observation above 
that~$|U^0|$ is divisible by~$2k$ ensures that~$M^0$ is an even integer, and in fact the algorithm will 
ensure that~$M^\tau$ is an even integer at each time~$\tau \geq 0$.
At time step~$\tau$, if~$|\,U_i^\tau\,| = M^\tau$ for all~\ink, then we stop with success.
Otherwise, since~$M^\tau$ is an integer, we may choose~$r, s \in [k]$ with 
$|U_r^\tau| \geq M^\tau + 1$ and~$|U_s^\tau| \leq M^\tau - 1$.
Define~$K_1 \deq  \{s+1,s+2,\ldots,r-1,r\}$ and~$K_2\deq \{r+1,\ldots,s\} = [k] \setminus K_1$,
with addition taken modulo~$k$.
For each~\ink, we choose a vertex~$w_i \in W_i^\tau$, and let~$p_i \in P_i^\tau$ be the vertex
to which~$w_i$ was embedded. We also choose a vertex~$x_i^+\in N^+(p_i)\cap X_\ipp \cap U_\ipp^\tau$ and, 
if~$i\in K_1$ 
then we choose a vertex~$x_i^-\in N^-(p_i)\cap X_i \cap U_i^\tau$, whilst if~$i \in K_2$
then we choose a vertex~$x_i^-\in N^-(p_i)\cap X_\imm \cap U_\imm^\tau$. 
We make these choices so that the~$2k$ vertices 
$\{x_1^-,x_1^+,\ldots,x_k^-,x_k^+\}$ are all distinct 
(if it is not possible to make such choices then we terminate with failure, but we shall see shortly
that this will not happen).
For each~$i \in [k]$ let~$w_i^+$ be the removed out-leaf of~$T$ adjacent to~$w_i$ and 
let~$w_i^-$ be the removed in-leaf of~$T$ adjacent to~$w_i$; we 
then set~$\phi(w_i^-) \deq x_i^-$ and~$\phi(w_i^+) \deq x_i^+$  
(see Figure~\ref{fi:balancing} for an illustration of this embedding).
To conclude this iteration of the algorithm, for each \ink\ we update the sets~$W_i^\tau$,~$P_i^\tau$ 
and~$U_i^\tau$ by 
setting~$W_i^{\tau+1} \deq W_i^\tau \sm \bigcup_\ink \{w_i\}$,~$P_i^{\tau+1}\deq P_i^\tau \sm \bigcup_\ink \{p_i\}$, 
and~$U_i^{\tau+1}\deq U_i^\tau \sm \bigcup_\ink \{x_i^+, x_i^-\}$. Observe that we then have 
\begin{equation} \label{e:updateu}
|U_i^{\tau+1}| = 
\begin{cases}
|U_i^{\tau}|-3 & \mbox{if~$i = r$,}\\
|U_i^{\tau}|-1 & \mbox{if~$i = s$, and}\\
|U_i^{\tau}|-2 & \mbox{otherwise.}
\end{cases}
\end{equation}
In particular it follows that~$M^{\tau+1} = M^\tau -2$; since~$M^\tau$ was an even integer 
it follows that~$M^{\tau+1}$ is an even integer, as required.

\begin{claim}
The balancing algorithm described above stops with~success after at~most~$3k\delta n$~iterations.
\end{claim}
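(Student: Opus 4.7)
The plan is to introduce a potential function that decreases deterministically by one at each iteration, to bound its initial value, and to verify that the greedy choices made by the algorithm are always feasible so that it never terminates with failure. For each $\tau \geq 0$ and \ink, set $D_i^\tau \deq |U_i^\tau| - M^\tau$ (so $\sum_{i\in[k]} D_i^\tau = 0$), and define
\[
\Phi^\tau \deq \sum_{i\in[k]\,:\,D_i^\tau > 0} D_i^\tau = \tfrac{1}{2}\sum_{i\in[k]} |D_i^\tau|.
\]
The algorithm terminates successfully at time $\tau$ precisely when $\Phi^\tau = 0$, so it suffices to show both that $\Phi^\tau$ drops by exactly one each iteration and that $\Phi^0 \leq 3k\delta n$.

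For the evolution, I would use \eqref{e:updateu} together with $M^{\tau+1} = M^\tau - 2$ to compute $D_r^{\tau+1} = D_r^\tau - 1$, $D_s^{\tau+1} = D_s^\tau + 1$, and $D_i^{\tau+1} = D_i^\tau$ for $i \notin \{r,s\}$. Since the algorithm chooses $r$ with $D_r^\tau \geq 1$ and $s$ with $D_s^\tau \leq -1$, the positive part of $D_r$ and the negative part of $D_s$ each shrink by one, yielding $\Phi^{\tau+1} = \Phi^\tau - 1$. To bound $\Phi^0$, observe from \eqref{e:Ui-after-step1} that $S \deq \max_i|U_i^0| - \min_i|U_i^0| \leq 12\delta n$. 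A short extremal argument then gives $\Phi^0 \leq kS/4$: if $a \deq |\{i : D_i^0 > 0\}|$ and $b \deq k-a$, then the average positive deviation is $\Phi^0/a$ and the average negative deviation is $-\Phi^0/b$, so $S \geq \Phi^0(1/a + 1/b) = \Phi^0 k/(ab) \geq 4\Phi^0/k$ using $ab \leq k^2/4$; combining gives $\Phi^0 \leq 3k\delta n$ as required.

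It remains to verify that the required $2k$ distinct vertices $\{x_i^-, x_i^+\}_{i\in[k]}$ can always be selected. Claim~\ref{c:reserve-XY} initially gives $\deg^0(p_i, X_i) \geq \eta m$ for each $p_i \in V_i'\setminus\{\vstar\}$ and $\deg^-(p_i, X_{i-1}), \deg^+(p_i, X_{i+1}) \geq \eta m$ for each $p_i \in V_i$. A short case analysis on the cyclic structure of $K_1$ and $K_2$ shows that each iteration places new images into any single $X_j$ at most three times (three into $X_r$, one into $X_s$, and two into each other $X_j$, for a total of $2k$). So after $\tau \leq 3k\delta n$ iterations at most $9k\delta n$ vertices of $X_j$ get covered by the balancing algorithm; combined with the at most $s_j \leq 4\eps m + 2\delta n$ vertices of $X_j$ previously consumed in Step~1, the total consumption stays well below $\eta m/2$ under the hierarchy $\eps \ll \eta \ll \gamma$ and $\delta = 2/\lln$. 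Hence each $p_i$ always has at least $\eta m/2 \gg 2k$ valid candidates when it is processed, and the $2k$ distinct choices can be made greedily within each iteration (equivalently, \Hall's condition holds trivially).

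The main obstacle is the sharp potential bound $\Phi^0 \leq 3k\delta n$: the factor $1/4$ (beyond the naïve $1/2$ one would get from $|D_i^0| \leq 12\delta n$ alone) relies on the elementary identity $\max\{ab : a + b = k\} = k^2/4$ together with the observation that the extremal configuration of initial deviations is a roughly balanced bipartition of clusters into a surplus group and a deficit group. The rest of the argument is a careful but routine accounting of vertex consumption across the reservoir sets $X_j$.
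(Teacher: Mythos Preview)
Your argument is correct and is essentially the paper's proof: your potential $\Phi^\tau$ is exactly $\tfrac{1}{2}\Upsilon^\tau$ where $\Upsilon^\tau \deq \sum_{i\in[k]}\bigl||U_i^\tau|-M^\tau\bigr|$ is the quantity the paper tracks, and both proofs show a deterministic unit (respectively, two-unit) drop per iteration together with the same feasibility check on the reservoirs $X_j$. Two minor corrections that do not affect the outcome: the Step~1 consumption of $X_j$ is at most $2s_j$ rather than $s_j$ (both $b_\ell$ and $x_\ell$ may lie in $X_j$), and you should also record that $|W_i^\tau| \geq |W_i^0| - \tau > 0$ throughout so that the vertices $w_i$ can be chosen; both are immediate under the stated hierarchy.
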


\begin{claimproof}
We first check that we can choose vertices~$w_i, p_i, x_i^-$ and~$x_i^+$ as described whenever~$\tau \leq 3k \delta n$. 
First observe that, for each~$i \in[k]$, since~$|W_i^0| \geq t/2k > 3k\delta n$ by~\eqref{e:Wi-after-step1}, 
and at most one vertex is removed from~$W_i^\tau$ at each step~$\tau$ of the balancing algorithm 
(and its image is removed from~$P_i^\tau$), there are at least~$|W_i^0| - \tau \geq 1$ possible 
choices for~$w_i$ at step~$\tau \leq 3k\delta n$ of the balancing algorithm. So we may choose the 
vertices~$w_i$ and~$p_i$ for~$i \in [k]$ as claimed. Next observe that for each~$i \in [k]$ 
at most~$2s_i \leq 8\eps m + 4\delta n$ vertices were embedded in~$X_i$ in Step 1. Also, each iteration 
of the balancing algorithm embeds at most three vertices in~$X_i$, so at time~$\tau \leq 3k \delta n$ the 
total number of vertices which have so far been embedded in~$X_i$ is 
at most~$3 \tau + 8 \eps m + 4\delta n \leq 9k \delta n + 9\eps m \leq \eta m/2$. 
Since~$p_i \in V'_i$, it follows by Claim~\ref{c:reserve-XY}\ref{i:deg-X} and~\ref{i:deg-Y} 
that~$\deg^-(p_i, X_\imm \cap U_\imm^\tau) \geq \eta m/2$, 
that~$\deg^+(p_i, X_\ipp \cap U_\ipp^\tau) \geq \eta m/2$ 
and that~$\deg^-(p_i, X_i \cap U_i^\tau) \geq \eta m/2$. 
So we may greedily choose the vertices~$x_i^-$ and~$x_i^+$ for each~$i \in [k]$ as desired.

It therefore suffices to prove that the algorithm stops after
at most~$3k\delta n$~iterations and thus,
because it cannot fail in these early steps, it always stops
successfully. For each~$\tau \geq 0$ let~$\Upsilon^\tau \deq \sum_{\ink} \big||\,U_i^\tau\,| - M^\tau\big|$, 
so~$\Upsilon^\tau$ is a non-negative integer.
In particular by~\eqref{e:Ui-after-step1} we have
\begin{equation}\label{e:upsilon0}
\Upsilon^0
= \sum_{\ink} \big||\,U_i^0\,| - M^0 \big|
\leq 6k\delta n,
\end{equation}
Also, by~\eqref{e:updateu} we have~$|U_r^{\tau+1}| = |U_r^{\tau}| - 3$ and~$M^{\tau+1} = M^\tau -2$; 
by our choice of~$r$ it follows 
that~$\big||\,U_r^{\tau+1}\,| - M^{\tau+1}\big| = \big||\,U_r^{\tau}\,| - M^{\tau}\big| - 1$. 
Similarly we find 
that~$\big||\,U_s^{\tau+1}\,| - M^{\tau+1}\big| = \big||\,U_s^{\tau}\,| - M^{\tau}\big| - 1$ and 
that~$\big||\,U_j^{\tau+1}\,| - M^{\tau+1}\big| = \big||\,U_j^{\tau}\,| - M^{\tau}\big|$ 
for each~$j \in [k] \sm \{r, s\}$. Together these equalities imply 
that~$\Upsilon^{\tau+1} = \Upsilon^\tau - 2$.
Since~$\Upsilon^\tau$ is always non-negative, we conclude that for some~$\tau \leq 3k\delta n$ we must 
have~$\Upsilon^\tau = 0$. It follows that~$|U_j^\tau| = M^\tau$ for all~$j\in[k]$, and so the algorithm 
will stop at step~$\tau$.
\end{claimproof}

Returning to the~proof of~Lemma~\ref{l:2nd-half}, we conclude that the balancing algorithm 
will stop with success at some time~$\tauend$ with~$\tauend\leq 3k\delta n$.
For each~\ink, let~$W_i^* \deq W_i^\tauend$,~$P^*_i \deq P_i^\tauend$, and~$U^*_i \deq U_i^\tauend$, 
and write~$W^* \deq \bigcup_\ink W_i^*$,~$P^* \deq \bigcup_\ink P^*_i$, and~$U^* \deq \bigcup_\ink U_i^*$. 
So the embedding~$\phi$ now covers all vertices of~$V(G)$ except for those in~$U^*$, 
and the only vertices of~$T$ which remain to be embedded are one in-leaf and one out-leaf of 
each vertex of~$W^*$. In particular we have~$|U^*| = 2|P^*| = 2|W^*|$.
Observe that in the execution of the balancing algorithm,
at each time~$\tau$ and for each~\ink\ 
precisely one vertex was removed from~$W_i^\tau$.
Therefore, since we initially had~$|W_1^0| = \cdots = |W_k^0|$ by~\eqref{e:Wi-after-step1}, 
we now have~$|W^*_1| = \cdots = |W^*_k|$. We denote this common size by~$L$, 
and note that by~\eqref{e:Wi-after-step1} we then have~$L \geq t/k - 4\eps m - \delta n - \tauend \geq 2t/3k$. 
Also, since~$\Upsilon^\tauend = 0$, we must have~$|U_1^*| = \dots = |U_k^*| = M^\tauend$, so 
 \begin{equation}\label{e:final-sets-are-large}
L = |W^*_1| = \cdots = |W^*_k| = |P^*_1| = \cdots = |P^*_k| = \frac{1}{2}|U^*_1| = \cdots = 
\frac{1}{2}|U^*_k| \geq \frac{2t}{3k} \geq \frac{\beta m}{2}.
\end{equation}

\medskip\emph{Step 3: Completing the embedding.} 
We are now ready to complete the embedding of~$T$ in~$G$ as described previously, 
beginning with the following claim.

\begin{claim}\label{c:P-U-super-regular}
For each~\ink\ each vertex in~$U^*_i$ has at least~$\eta m/2$ inneighbours in~$P^*_\imm$ and at 
least~$\eta m/2$ outneighbours in~$P^*_\ipp$, 
and each vertex in~$P^*_i$ has at least~$\eta m$ inneighbours in~$U^*_\imm$ and 
at least~$\eta m$ outneighbours in~$U^*_\ipp$.
\end{claim}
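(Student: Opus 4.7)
The plan is to verify the two degree bounds in the claim separately, leveraging the fact that the embedding~$\phi$ has been carefully controlled never to touch the reserved sets~$Y_i$ and never to remove too many vertices from each~$P_i$.

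\textbf{Bound from $U^*_i$ into $P^*_{i\pm 1}$.} First I note that $U^*_i \subseteq U^0_i = U_i \setminus B_i$, so in particular $U^*_i$ is disjoint from $B_i^- \cup B_i^+$. By the definition of $B_i^-$ and $B_i^+$ made at the start of Step~1, every $x \in U^*_i$ then satisfies $\deg^-(x, P_{i-1}) \geq \eta m$ and $\deg^+(x, P_{i+1}) \geq \eta m$. The next step is to bound $|P_j \setminus P^*_j|$: Step~1 removes exactly $s_j \leq 4\eps m + 2\delta n$ vertices from $P_j$, and each of the at most $\tauend \leq 3k\delta n$ iterations of the balancing algorithm removes exactly one further vertex from each $P_j$. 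Since $\eps \ll \eta$ and $\delta n = 2n/\lln = \littleo(m)$ by the hierarchy $1/n \ll 1/k$, we obtain $|P_j \setminus P^*_j| \leq 4\eps m + 2\delta n + 3k\delta n \leq \eta m/2$. Subtracting gives $\deg^-(x, P^*_{i-1}) \geq \eta m/2$ and $\deg^+(x, P^*_{i+1}) \geq \eta m/2$, as required.

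\textbf{Bound from $P^*_i$ into $U^*_{i\pm 1}$.} The key observation here is that the reserved sets $Y_j$ have been completely untouched by $\phi$. Indeed, the vertices of $T'$ were embedded into $V'_1 \cup \dots \cup V'_k$, which is disjoint from every $F_j \supseteq Y_j$; in Step~1 each removed leaf was embedded either to a vertex of $B_j \subseteq U_j$ or to a vertex of $X_j \setminus B_j$; and in every iteration of the balancing algorithm the vertices $x_i^\pm$ are chosen from $X_{i-1}$, $X_i$, or $X_{i+1}$. Since the $Y_j$ are disjoint from all the $X_j$ (by Claim~\ref{c:reserve-XY}(ii)) and from every $V'_j$, no vertex of any $Y_j$ is ever covered by $\phi$. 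Consequently $Y_{i-1} \subseteq U^*_{i-1}$ and $Y_{i+1} \subseteq U^*_{i+1}$ for each $i \in [k]$. By Claim~\ref{c:reserve-XY}(iv), every $x \in V_i \supseteq P^*_i$ has $\deg^-(x, Y_{i-1}) \geq \eta m$ and $\deg^+(x, Y_{i+1}) \geq \eta m$, so a fortiori $\deg^-(x, U^*_{i-1}) \geq \eta m$ and $\deg^+(x, U^*_{i+1}) \geq \eta m$.

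The argument is essentially bookkeeping — there is no real obstacle, since the heavy lifting has been arranged for us by the careful construction of $F_i$, $X_i$, $Y_i$ in Claim~\ref{c:reserve-XY} and by the fact that Steps~1 and~2 deliberately avoid touching $Y_j$ while removing only a controlled number of vertices from each $P_j$. The only care needed is to keep track of the constant hierarchy: one uses $\eps \ll \eta$, together with $1/\lln = \littleo(1)$, to absorb the error term $2\delta n + 3k\delta n$ into $\eta m/2$.
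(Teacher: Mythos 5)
Your argument is correct and follows essentially the same route as the paper's own proof: the first bound comes from the fact that $B_i\supseteq B_i^-\cup B_i^+$ was covered in Step~1 together with the estimate $|P_j\setminus P_j^*|\leq s_j+\tauend\leq 4\eps m+2\delta n+3k\delta n\leq \eta m/2$, and the second from $Y_{i\pm 1}\subseteq U^*_{i\pm 1}$ combined with Claim~\ref{c:reserve-XY}\ref{i:deg-Y}. Your extra bookkeeping about which sets the various embedded vertices land in is exactly the observation the paper states more tersely ("no vertices have yet been embedded in any set $Y_j$"), so nothing further is needed.
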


\begin{claimproof}
Recall that the set~$B_i$ chosen in Step 1 contained all vertices of~$U_i$ with fewer than~$\eta m$ 
inneighbours in~$P_\imm$ or fewer than~$\eta m$ outneighbours in~$P_\ipp$. All vertices of~$B_i$ were 
covered in Step 1, so no vertex of~$B_i$ is contained in~$U^*_i$. The first statement then follows from the 
fact that for each~$j \in [k]$ we have 
\[ |P_j \sm P_j^*| = |P_j \sm P_j^0| + |P_j^0 \sm P_j^*| \leq s_j + \tauend 
\leq 4\eps m + 2 \delta n + 3k\delta n \leq \frac{\eta m}{2}.\]
For the second statement observe that no vertices have yet been embedded in any set~$Y_j$, 
so~$Y_\imm \subseteq U^*_\imm$ and~$Y_\ipp \subseteq U^*_\ipp$. Moreover, 
since~$P^*_i \subseteq P_i \subseteq V_i$, by Claim~\ref{c:reserve-XY}\ref{i:deg-Y} every vertex 
of~$P^*_i$ has at least~$\eta m$ inneighbours in~$Y_\imm$ and at least~$\eta m$ outneighbours in~$Y_\ipp$.
\end{claimproof}

For each~$\ink$ we now partition~$U^*_i$ into disjoint sets~$U_i^-$ and~$U_i^+$ each of size~$L$ 
uniformly at random and independently of all other choices. 
Since~$G[V_i\rarr V_\ipp]$ is~$(\dplus,\eps)$-regular for each~\ink, 
by~\eqref{e:final-sets-are-large} and Lemma~\ref{l:slice-pair} both 
$G[U_\imm^- \rarr P^*_i]$ and~$G[P^*_i \rarr U_\ipp^+]$ are then~$(\dplus,\eps')$-regular, where~$\eps' \deq 3\eps/\beta$.
Also, by Claim~\ref{c:P-U-super-regular}, 
each~$u \in U_\imm^-$ has~$\deg^+(u, P^*_i) \geq \eta m/2 \geq \eta L/2$ and
each~$u \in U_\ipp^+$ has~$\deg^-(u, P^*_i) \geq \eta m/2 \geq \eta L/2$.
Furthermore, for each~$p \in P^*_i$ the 
random variables~$\deg^-(p, U_\imm^-)$ and~$\deg^+(p, U_\ipp^+)$ each have 
hypergeometric distributions with expectation at least~$\eta m L/2L \geq \eta L/2$. 
Applying Theorem~\ref{t:exp} and taking a union bound we find that with 
positive probability we have for every~\ink\ and every~$p \in P^*_i$ 
that~$\deg^-(p, U_\imm^-) \geq \eta L/4$ and~$\deg^+(p, U_\ipp^+) \geq \eta L/4$. 
Fix such an outcome of our random selection; then for each~\ink\ the 
underlying graphs of both~$G[U_\imm^- \rarr P^*_i]$ and~$G[P^*_i \rarr U_\ipp^+]$ 
are~$(\eta/4, \eps')$-super-regular balanced bipartite graphs with 
vertex classes of size~$L$.

We may therefore apply Lemma~\ref{l:matching} to obtain, for each~\ink, a perfect 
matching~$M_i^-$ in~$G[U_\imm^- \rarr P^*_i]$ and a perfect matching~$M_i^+$ in~$G[P^*_i \rarr U_\ipp^+]$. 
For each~\ink\ and each~$w \in W^*_i$ 
let~$w^-$ be the removed in-leaf of~$T$ adjacent to~$w$ and
let~$w^+$ be the removed out-leaf of~$T$ adjacent to~$w$. 
Also let~$p = \phi(w)$ and let~$q^- \in U_\imm^-$ and~$q^+ \in U_\ipp^+$ be the vertices 
matched to~$p$ in~$M_i^-$ and~$M_i^+$ respectively, and set~$\phi(w^-) \deq q^-$ and~$\phi(w^+) \deq q^+$.
Since each~$p \in P^*_i$ is matched to precisely one inneighbour in~$U_\imm^-$ and precisely one 
outneighbour in~$U_\ipp^+$, this extends~$\phi$ to an embedding of~$T$ in~$G$.
\end{proof}

\subsection{Joining the pieces}\label{s:joining}

As outlined at the start of this section, we will `split' our tree~$T$ into two 
subtrees~$T_1$ and~$T_2$, which we embed successively in~$G$ using Lemmas~\ref{l:1st-half} 
and~\ref{l:2nd-half}. Definition~\ref{d:tree-partition} makes this notion precise, 
following which Lemma~\ref{l:split-tree} shows that every oriented tree admits such a split.

\begin{definition}\label{d:tree-partition}
Let~$T$ be a tree or oriented tree. A~\defi{tree-partition} of~$T$ is a pair~$\{T_1,T_2\}$ of 
edge-disjoint subtrees of~$T$ such that~$V(T_1) \cup V(T_2) = V(T)$ and~$E(T_1) \cup E(T_2) = E(T)$ 
(so in particular~$T_1$ and~$T_2$ have precisely one vertex in common).
\end{definition}

\begin{lemma}\label{l:split-tree}
Let~$T$ be a tree or oriented tree. For every set~$L\subseteq V(T)$ there exists
a~tree-partition~$\{T_1, T_2\}$ of~$T$ such that~$T_1$ and~$T_2$ each contain
at~least~$|L|/3$ vertices of~$L$.
\end{lemma}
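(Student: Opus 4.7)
The plan is to find a vertex $c \in V(T)$ to serve as the unique common vertex of $T_1$ and $T_2$, and then to distribute the components of $T - c$ between $T_1$ and $T_2$ so that each side contains at least $|L|/3$ vertices of $L$. The cases $|L| \leq 1$ are trivial (for $|L| = 1$ simply take $c$ to be the unique element of $L$, so that both sides contain it), so I will assume $|L| \geq 2$.

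The first step is to establish the existence of a \emph{weighted centroid}: a vertex $c \in V(T)$ such that every component of $T - c$ contains at most $|L|/2$ vertices of $L$. This follows from the standard monotone procedure. Start at any vertex $v$; while some component $C$ of $T - v$ contains more than $|L|/2$ vertices of $L$, replace $v$ by its unique neighbour $v'$ in $C$. After such a replacement, the component of $T - v'$ containing the previous vertex $v$ is a subset of $V(T) \setminus C$, which contains fewer than $|L|/2$ vertices of $L$, so the procedure never revisits a vertex and hence terminates at a vertex $c$ with the required property.

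Having fixed $c$, let $C_1, \ldots, C_k$ be the components of $T - c$ with $L$-weights $w_i \deq |V(C_i) \cap L|$, so each $w_i \leq |L|/2$ and $\sum_i w_i = |L| - [c \in L]$. I would distribute the $C_i$ into two classes $\mathcal{A}, \mathcal{B}$ by the following case analysis. If some $w_i > |L|/3$ then setting $\mathcal{A} = \{C_i\}$ gives $w(\mathcal{A}) > |L|/3$ and $w(\mathcal{B}) = |L| - [c \in L] - w_i \geq |L|/2 - 1$. Otherwise every $w_i \leq |L|/3$, and I add components to $\mathcal{A}$ greedily until $w(\mathcal{A})$ first reaches $|L|/3$; since the last component added has weight at most $|L|/3$ we get $w(\mathcal{A}) \leq 2|L|/3$ and hence $w(\mathcal{B}) \geq |L|/3 - [c \in L]$. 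Finally, I define $T_1$ to consist of $c$ together with the components in $\mathcal{A}$ and the edges joining them to $c$, and $T_2$ analogously from $\mathcal{B}$; then $\{T_1, T_2\}$ is a tree-partition of $T$. Each side has $|V(T_j) \cap L| = w(\cdot) + [c \in L]$, and a short check in each case (accounting for whether $c \in L$) confirms that both sides satisfy $|V(T_j) \cap L| \geq |L|/3$. The main subtlety is only this $\pm 1$ accounting between $w(\cdot)$ and $|V(T_j) \cap L|$, which is absorbed by the contribution of $c$ itself when $c \in L$.
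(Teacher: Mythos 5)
Your proof is correct, and it reaches the lemma by a mildly different route than the paper. Both arguments split $T$ at a single vertex and distribute the components of $T$ minus that vertex between $T_1$ and $T_2$, but the two key ingredients differ. The paper calls $v$ a heavy neighbour of $u$ (for an edge $e=\{u,v\}$) when $|C_v^e\cap L|\geq |L|/3$; either some edge has both endpoints heavy for each other, and that edge itself yields the tree-partition, or an auxiliary orientation (sink) argument produces a vertex $v$ such that every component of $T-v$ carries fewer than $|L|/3$ vertices of $L$, after which Fact~\ref{f:sum} groups the components into two classes of $L$-weight between $(|L|-1)/3$ and $2(|L|-1)/3$. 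You instead take the classical weighted centroid $c$ (every component of $T-c$ has at most $|L|/2$ vertices of $L$), found by the standard walking argument, and balance by hand: if some component has weight $w_i>|L|/3$ you set it against the rest, the centroid bound giving the other side $|L|-w_i\geq |L|/2\geq |L|/3$ vertices of $L$ (counting $c$ when $c\in L$); otherwise all components are light and your greedy prefix sum stops between $|L|/3$ and $2|L|/3$, which is in effect a reproof of Fact~\ref{f:sum}. The trade-off is small: the paper's choice of vertex makes the balancing a one-line application of the fact but requires the heavy-neighbour argument plus a separate edge case, while your centroid is entirely standard but forces the extra case when one component exceeds $|L|/3$. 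Your details check out: in each case $|V(T_2)\cap L|=|L|-w(\mathcal{A})$, the greedy step can indeed reach $|L|/3$ because the total component weight is at least $|L|-1\geq |L|/3$ once $|L|\geq 2$, and your termination argument for the centroid walk (the walk never re-traverses an edge backwards, hence never revisits a vertex of a tree) is the standard one and is valid.
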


We prove Lemma~\ref{l:split-tree} using the following simple fact.

\begin{fact}\label{f:sum}
Let~$x_1,\ldots,x_s$ be non-negative integers.
If~$x_1,\ldots,x_s \leq c$ and~$x_1+\cdots+x_s\geq 3c$, then there exists~$i\in[s]$ such that
$c\leq x_1+\cdots+x_i\leq 2c$.
\end{fact}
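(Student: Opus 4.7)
The plan is to consider the sequence of partial sums $S_0 \deq 0$ and $S_j \deq x_1 + \cdots + x_j$ for $j \in [s]$, and to pick $i$ to be the smallest index such that $S_i \geq c$. Since $S_s \geq 3c \geq c$, such an index exists. By minimality we have $S_{i-1} < c$, and since $x_i \leq c$ we get
\[
S_i = S_{i-1} + x_i < c + c = 2c.
\]
Hence $c \leq S_i \leq 2c$, as required. The only edge case is $c = 0$, in which case every $x_j$ equals $0$ and any index works trivially. There is no real obstacle here; the argument is essentially the observation that a sequence of non-negative integers with bounded increments cannot ``jump over'' an interval of length $c$.
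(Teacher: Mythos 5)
Your proof is correct: taking the first index at which the partial sums reach $c$ and using $S_{i-1} < c$ together with $x_i \leq c$ (with the degenerate case $c=0$ noted separately) is exactly the intended elementary argument. The paper states this fact without proof, so there is nothing further to compare; your write-up supplies precisely the justification the authors leave implicit.
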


For a tree or oriented tree $T$, an edge~$e\in E(T)$ and a vertex~$v\in e$,
we write~$T-e$ for the oriented forest we obtain by deleting~$e$ from~$T$, 
and write~$C_v^e$ for the vertex set of the component of~$T-e$ which contains~$v$.

\begin{proof}[Proof of~Lemma~\ref{l:split-tree}]
Since edge orientations do not affect the validity of a tree-partition, 
we may assume that $T$ is an undirected tree.
Define~$\ell\deq |L|$. For each edge~$e=\{u,v\}\in E(T)$ we say 
that~$v$ is a~\defi{heavy neighbour} of~$u$ if~$|C_v^e \cap L| \geq \ell/3$.
Observe that if~$u$ and~$v$ are both heavy neighbours of each other, then~$\bigl\{T\bigl[C_u^e\bigr], T\bigl[C_v^e \cup \{u\}\bigr]\bigr\}$ 
is the desired tree-partition. 
We may therefore assume that for each edge~$e=\{u,v\}\in E(T)$ either~$u$ is a heavy neighbour of~$v$ 
or~$v$ is a heavy neighbour of~$u$, but not both.
It follows that some vertex~$v$ has no heavy neighbours 
(to see this, form an auxiliary orientation of~$E(T)$ with each edge directed~$u \rarr v$ where~$v$ is a 
heavy neighbour of~$u$, and choose~$v$ to be a vertex with no outneighbours).
Let~$C_1,\ldots,C_s$ be the vertex sets of the components of~$T-v$.
For each~$i\in[s]$ let~$\ell_i\deq |C_i\cap L|$, and observe that since~$v$ has no heavy neighbours we
have~$\ell_i < \ell/3$;
since~$\ell_i$ is an~integer, we then have~$\ell_i\leq (\ell -1)/3$.

If~$v\in L$, then~$\ell_1+\cdots+\ell_s=\ell -1$
and by~Fact~\ref{f:sum} there exists~$j\in [s]$ with
$(\ell -1)/3\leq \ell_1+\cdots+\ell_j\leq 2(\ell-1)/3$.
In this case the desired tree-partition is~$
\bigl\{T\bigl[\{v\}\cup\bigcup_{1\leq i\leq j} C_i\bigr],\, 
T\bigl[\{v\}\cup\bigcup_{j < i\leq s} C_i\bigr]\bigr\}$,
because each of these subtrees contains~$v$ and hence contains
 at~least~$(\ell -1)/3 + 1 > \ell/3$ vertices of~$L$.
On the other hand, if~$v\notin L$, 
then~$\ell_1+\cdots+\ell_s=\ell$,
so by Fact~\ref{f:sum} above there exist~$j\in [s]$ with
$\ell/3\leq \ell_1+\cdots+\ell_j \leq 2\ell/3$, and again 
$
\bigl\{T\bigl[\{v\}\cup\bigcup_{1\leq i\leq j} C_i\bigr],\,
T\bigl[\{v\}\cup\bigcup_{j< i\leq s} C_i\bigr]\bigr\}$
is the desired tree-partition.
\end{proof}

We are now ready to state and prove Lemma~\ref{l:in-CCT}, the main result of this section.

\begin{lemma}\label{l:in-CCT}
Suppose that~$1/n \ll 1/C$ and that~$1/n \ll 1/k \ll \eps\ll d\ll\psi\ll\alpha.$
Let~$T$ be an~$\alpha$-nice oriented tree on~$n$~vertices with 
maximum degree~$\Delta(T)\leq (\log n)^C$.
Also let~$G$ be a tournament on~$n$ vertices which contains a~$(d, \eps)$-regular \CCT\ 
whose clusters~$V_1, \dots, V_k$ have equal size such that~$B \deq V(G) \sm \bigcup_\ink V_i$ has 
size~$|B| \leq \psi n$. Then~$G$ contains a (spanning) copy of~$T$.
\end{lemma}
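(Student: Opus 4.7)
The plan is to follow the \CsCT\ case of the outline in Section~\ref{s:intro}: split $T$ at a single vertex $v^*$ into two edge-disjoint subtrees $T_1$ and $T_2$, embed $T_1$ via Lemma~\ref{l:1st-half} so as to cover every atypical vertex of $G$, and then embed $T_2$ via Lemma~\ref{l:2nd-half} into the remaining uncovered vertices of $G$, rooted at $\phi_1(v^*)$.

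First I would introduce auxiliary constants $\alpha_1,\beta,d',\eps'$ with $1/n \ll 1/k \ll \eps \ll d \ll d' \ll \psi \ll \beta \ll \alpha_1 \ll \alpha$ and enlarge the bad set: let $B^*$ consist of $B$ together with every $v \in V_i$ satisfying $\deg^-(v,V_{i-1}) < (d-\eps)|V_{i-1}|$ or $\deg^+(v,V_{i+1}) < (d-\eps)|V_{i+1}|$. By $(d,\eps)$-regularity of the \CCT, $|B^*| \leq \psi n + 2\eps n \leq 2\psi n$, and setting $\tilde V_i \deq V_i \setminus B^*$ Lemma~\ref{l:slice-pair} shows that $\tilde V_1,\dots,\tilde V_k$ form a $(d,O(\eps))$-regular \CCT\ in which every vertex has in- and out-degree at least $(d-3\eps)|\tilde V_{i\pm1}|$ into the adjacent tilded clusters. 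Since $T$ is $\alpha$-nice, I take $L$ to be the centres of the pendant stars $B_1,\dots,B_{\lceil\alpha n\rceil}$ from Definition~\ref{d:nice-trees}, so $|L| \geq \alpha n$ and every $v\in L$ is adjacent in $T$ to at least one in-leaf and one out-leaf of $T$. Applying Lemma~\ref{l:split-tree} yields a tree-partition $\{T_1,T_2\}$ of $T$ with common vertex $v^*$ for which $|L\cap V(T_i)| \geq |L|/3 \geq \alpha n/3$ for $i=1,2$.

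Next, after labelling the pieces so that $|T_1|$ lies in the range $[n/3,\,n/(1+\alpha_1)]$ required by Lemma~\ref{l:1st-half} against clusters of size $|\tilde V_i| \approx n/k$ (possible because $\alpha_1 \leq \alpha/(3-\alpha)$ and one of $|T_1|,|T_2|$ is always at least $(n+1)/2$), I apply Lemma~\ref{l:1st-half} to embed $T_1$ rooted at $v^*$ into $G$ with clusters $\tilde V_1,\dots,\tilde V_k$ and bad set $B^*$. The resulting embedding $\phi_1\colon V(T_1) \to V(G)$ covers $B^*$ and satisfies $|\phi_1(V(T_1))\cap\tilde V_i| = (|T_1|-|B^*|)/k \pm O(n/\log\log n)$ for each~\ink. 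Setting $U_i \deq \tilde V_i \setminus \phi_1(V(T_1))$ and $U_1^* \deq U_1 \cup \{\phi_1(v^*)\}$ (with $\phi_1(v^*) \in \tilde V_1$ by the lemma), the clusters $U_1^*,U_2,\dots,U_k$ each have size $|T_2|/k \pm O(n/\log\log n)$; because $B^*$ is fully covered, every remaining uncovered vertex is typical, and Lemma~\ref{l:slice-pair} combined with $|T_1|,|T_2| = \Theta(n)$ then gives that $U_1^*,U_2,\dots,U_k$ form a $(d',\eps')$-super-regular \CCT. Finally, Lemma~\ref{l:2nd-half} applied to $T_2$ (with root $v^*$ and designated vertex $\phi_1(v^*)$) embeds $T_2$ into this super-regular \CCT, and together with $\phi_1$ this yields the desired spanning copy of $T$ in $G$.

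The main obstacle I anticipate is arranging three competing constraints at once: the cluster-size hypothesis of Lemma~\ref{l:1st-half} (essentially forcing $|T_1| \in [n/3, n/(1+\alpha_1)]$); the cluster-size tolerance of Lemma~\ref{l:2nd-half} (requiring the $O(|T_1|/\log\log|T_1|)$ error in cluster counts introduced by Lemma~\ref{l:1st-half} not to exceed the $O(|T_2|/\log\log|T_2|)$ slack allowed by Lemma~\ref{l:2nd-half}, essentially demanding that $|T_1|$ and $|T_2|$ be of comparable order); and super-regularity of the uncovered \CCT\ $U_1^*,U_2,\dots,U_k$ (needing a positive fraction of each original cluster to remain uncovered). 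Meeting all three requires a careful choice of the tree-partition --- possibly refining Lemma~\ref{l:split-tree} by a centroid-type balancing step so that both $|T_1|,|T_2|$ are $(1\pm o(1))n/2$ --- together with the calibration $\alpha_1 \leq \alpha/(3-\alpha)$ and $d' \ll d \ll \psi \ll \alpha$, and is the most delicate part of the argument.
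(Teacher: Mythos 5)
Your top-level strategy (split $T$ with Lemma~\ref{l:split-tree} using the $\alpha$-niceness set $L$, embed $T_1$ by Lemma~\ref{l:1st-half} to cover the bad vertices, then finish with Lemma~\ref{l:2nd-half}) is the same as the paper's, but there is a genuine gap at the step where you claim the uncovered sets $U_1^*,U_2,\dots,U_k$ form a \emph{super}-regular \CCT. Covering $B^*$ only guarantees that every surviving vertex $x\in V_i$ has $\deg^-(x,V_\imm),\deg^+(x,V_\ipp)\geq (d-\eps)|V_{i\pm1}|\approx dn/k$ into the \emph{full} adjacent clusters; since $d\ll\alpha$, this neighbourhood can be entirely swallowed by the image of $T_1$ (Lemma~\ref{l:1st-half} gives no control over \emph{which} vertices inside a cluster get covered), so a typical vertex may end up with no outneighbours at all in $U_{\ipp}$. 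Lemma~\ref{l:slice-pair} preserves regularity of the leftover but not the minimum-degree condition, so super-regularity does not follow and Lemma~\ref{l:2nd-half} cannot be invoked. The paper repairs exactly this point by reserving, \emph{before} embedding $T_1$, random sets $Z_i\subseteq V_i'$ of size $\alpha m/5$ (chosen so that every typical vertex has $\geq d\alpha m/10$ inneighbours in $Z_\imm$ and outneighbours in $Z_\ipp$), and keeping the host clusters for $T_1$ disjoint from the $Z_i$; then $Z_i\subseteq U_i^*$ survives and supplies the minimum degree for a $(d\alpha/10,2\eps/\alpha)$-super-regular \CCT.

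The other difficulty you flag — reconciling the cluster-size window of Lemma~\ref{l:1st-half} with the error tolerance of Lemma~\ref{l:2nd-half} — is real, but your proposed fix (a centroid-type refinement of Lemma~\ref{l:split-tree} making $|T_1|,|T_2|=(1\pm\littleo(1))n/2$) is not available: a tree-partition meets in a single vertex, and e.g.\ a spider with three legs of length $n/3$ admits no such balanced split, let alone one that also balances $L$. The paper avoids any balancing requirement: it embeds $T_1$ not into the full (trimmed) clusters but into a sub-tournament $G_1=G\bigl[B'\cup\bigcup_\ink X_i\bigr]$ whose clusters $X_i\subseteq V_i'\sm Z_i$ are cut down to size exactly $(1+\alpha/4)n_1/k$, so the hypotheses of Lemma~\ref{l:1st-half} hold for any $n_1$ with $3\alpha n/4\leq n_1\leq(1-2\alpha/3)n$ (which the niceness-based split already gives); and it orders the pieces so that $n_1\leq n_2$, whence the $\pm 2n_1/\log\log n_1$ cluster-count error inherited from Lemma~\ref{l:1st-half} is at most the $\pm 2n_2/\log\log n_2$ slack permitted by Lemma~\ref{l:2nd-half}. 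Without the reserved sets $Z_i$ and the cluster-trimming device, your argument as written does not go through.
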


\begin{proof}[Proof of~Lemma~\ref{l:in-CCT}]
Introduce a new constant~$\beta$ with 
$\psi \ll \beta \ll \alpha$,
and define~$m \deq |V_1|=\cdots=|V_k| = \bigl(n-|B|\bigr)/k$ and~$s \deq \lceil \alpha n \rceil$. 
Since~$T$ is~$\alpha$-nice we may choose a set~$L$ of~$s$~distinct vertices of~$T$ such that
each vertex in~$L$ is adjacent to at least one in-leaf and at least one out-leaf of~$T$.
Apply Lemma~\ref{l:split-tree} to obtain a tree-partition~$\{T_1, T_2\}$ of~$T$ 
such that the subtrees~$T_1$ and~$T_2$ each contain at least~$s/3$ vertices of~$L$. 
Let~$r$ be the unique common vertex of~$T_1$ and~$T_2$, which we take as the root of each subtree, 
and observe that for each vertex~$x \neq r$ every neighbour of~$x$ is contained
in the same subtree as~$x$. 
So in particular~$T_1$ contains at least~$s/3-1 \geq \alpha n/4 \geq \beta n$ vertices each adjacent
to at least one in-leaf and at least one out-leaf of~$T_1$, 
and likewise~$T_2$ contains at least~$\alpha n/4 \geq \beta n$ vertices each adjacent
to at least one in-leaf and at least one out-leaf of~$T_2$. 
Write~$n_1 \deq |T_1|$ and~$n_2 \deq |T_2|$, so~$3 \alpha n/4 \leq n_1, n_2$ and~$n_1 + n_2 = n+1$.
By relabelling if necessary we may assume that~$n_1 \leq n_2$. Observe also that
$\Delta(T_1) \leq \Delta(T) \leq (\log n)^C \leq (\log n_1)^{2C}$ and likewise that
$\Delta(T_2) \leq (\log n_2)^{2C}$. So~$T_1$ meets the conditions of
Lemma~\ref{l:1st-half} with~$2C$ and~$n_1$ in place of~$C$ and~$n$ respectively, 
and likewise~$T_2$ meets the conditions of
Lemma~\ref{l:2nd-half} with~$2C$ and~$n_2$ in place of~$C$ and~$n$ respectively.

Next, proceed as follows for each~\ink. 
Define~$B_i^+ \deq\bigl\{\, v\in V_i: \deg^+(v, V_\ipp) < (d - \eps)m\,\bigr\}$,
and~$B_i^- \deq\bigl\{\, v\in V_i: \deg^-(v, V_\imm) < (d - \eps)m\,\bigr\}$.
Since~$G[V_\imm \rarr V_i]$ and~$G[V_i \rarr V_\ipp]$ are each~$(\dplus, \eps)$-regular, we must then 
have~$|B_i^-|, |B_i^+| < \eps m$. 
Let~$B_i$ be a set of~$2\eps m$ vertices 
such that~$B_i^-\cup B_i^+\subseteq B_i\subseteq V_i$ and define~$V_i'\deq V_i\setminus B_i$.
It follows that for every vertex~$x \in V'_i$ 
we have~$\deg^-(v, V'_\imm), \deg^+(v, V'_\ipp) \geq (d-\eps) m - 2\eps m = (d - 3\eps) m$.
Choose a subset~$Z_i \subseteq V'_i$ of size~$|Z_i| = \alpha m/5$ uniformly at random 
and independently of all other choices. So for each~$x \in V'_i$ the random variables 
$\deg^-(x, Z_\imm)$ and~$\deg^+(x, Z_\ipp)$ have hypergeometric distributions with expectation at least
$(d - 3\eps)\alpha m/5$. Applying~Theorem~\ref{t:exp} and 
taking a union bound we find that with positive probability 
we have for every~$x \in V'_i$ that~$\deg^-(x, Z_\imm), \deg^+(x, Z_\ipp) \geq d \alpha m/10$. Fix an outcome 
of the random selections for which this event occurs.

Define~$B' \deq B \cup \bigcup_\ink B_i$, so~$|B'| = |B| + 2k\eps m \leq 2\psi n$. 
Next choose arbitrarily a set~$X_i \subseteq V'_i \sm Z_i$ of size~$(1+\alpha/4)n_1/k$ for each~\ink;
this is possible since for each~\ink\ we have 
\[|V'_i \sm Z_i| = (1-2\eps)m - \frac{\alpha m}{5} \geq \left(1-\frac{\alpha}{4}\right)m 
\geq \left(1-\frac{\alpha}{4}\right)(1-\psi)\frac{n}{k} \geq \left(1-\frac{\alpha}{3}\right)\frac{n}{k} 
\geq \left(1+\frac{\alpha}{3}\right)\frac{n_1}{k},\]
where the final inequality uses the fact that~$n_1 \leq n+1-n_2 \leq n+1 - 3\alpha n/4 \leq (1-2\alpha/3)n$.
Define~$G_1 \deq G\bigl[B' \cup \bigcup_\ink X_i\bigr]$.
Since~$G[V_i \to V_\ipp]$ is~$(\dplus, \eps)$-regular for each~\ink, and~$n_1 \geq 3\alpha n/4$, 
it follows by Lemma~\ref{l:slice-pair} that the sets~$X_1, \dots, X_k$ are 
the clusters of a~$(d, \eps')$-regular \CCT\ in~$G_1$, where~$\eps' \deq 4\eps/3\alpha$. 
The tournament~$G_1$, the clusters~$X_i$ and the set~$B'$ therefore
meet the conditions of Lemma~\ref{l:1st-half} with~$n_1, \alpha/3, \eps'$ and~$2\psi$ in 
place of~$n, \alpha, \eps$ and~$\psi$ respectively. 
So we may apply Lemma~\ref{l:1st-half} to obtain an embedding~$\phi$ of~$T_1$ in~$G_1$ so 
that~$r$ is embedded in~$X_1$, so that every vertex of~$B'$ is covered, and so that for each~$i \in [k]$ we have 
\begin{equation}\label{e:phiimage}
\bigl|\phi\bigl(V(T)\bigr) \cap X_i\bigr| = \bigl(n_1-|B'|\bigr)\left(\frac{1}{k} \pm \frac{2}{\log \log n_1}\right) = 
\frac{n_1-|B'|}{k} \pm \left(\frac{2n_1}{\log \log n_1}-2\right).
\end{equation}

For convenience of notation write~$E \deq \frac{2n_2}{\log \log n_2} \geq \frac{2n_1}{\log \log n_1}$.
For each~\ink\ define~$U_i \deq V_i \sm \phi\bigl(V(T)\bigr)$, so~$U_i$ contains all vertices of~$V_i$ not covered by 
our embedding of~$T_1$. 
Then by~\eqref{e:phiimage} we  
have for each~\ink\ that 
\begin{align*}
|U_i| &= \bigl|V_i \sm B_i\bigr| - \frac{n_1-|B'|}{k} \pm (E-2) = 
m - 2\eps m - \frac{n_1}{k} + \frac{|B| + 2k\eps m}{k} \pm (E-2)\\
& = \frac{n - |B|}{k} - \frac{n_1}{k} + \frac{|B|}{k} \pm (E-2) = \frac{n_2}{k} \pm (E-1),
\end{align*}
where the second equality uses the fact that~$|B'| = |B| + 2k\eps m$, 
and the final equality uses the fact that~$n_2 = n+1-n_1$. 
Let~$v = \phi(r)$, so~$v \in X_1$, and set~$U_1^* \deq U_1 \cup \{v\}$ and~$U_i^* \deq U_i$ for~$2 \leq i \leq k$, 
so~$|U_i^*| = \frac{n_2}{k} \pm E$ for each \ink.
In particular, we have~$|U^*_i| \geq \alpha n/2k \geq \alpha |V_i|/2$ for each~\ink, 
so by Lemma~\ref{l:slice-pair} the sets~$U^*_1, \dots, U^*_k$ are the 
clusters of a spanning~$(d, 2\eps/\alpha)$-regular \CCT\ in the 
tournament $G_2 \deq G[U^*_1 \cup \dots \cup U^*_k]$. 
Furthermore, for each~\ink\ we have~$Z_i \subseteq U^*_i \subseteq V'_i$ 
(since we chose~$X_i$ to be disjoint from~$Z_i$, and
every vertex of~$B$ was covered by the embedding of~$T_1$), 
so every vertex~$u \in U^*_i$ has 
$\deg^-(x, U^*_\imm), \deg^+(x, U^*_\ipp) \geq d\alpha m/10$. So in fact the 
clusters~$U^*_1, \dots, U^*_k$ form a spanning~$(d\alpha/10, 2\eps/\alpha)$-super-regular \CCT\ in~$G_2$.
In other words, the tournament~$G_2$, the clusters~$U^*_1, \dots, U^*_k$ and the vertex~$v$ meet 
the conditions of Lemma~\ref{l:2nd-half} with~$d\alpha/10, 2\eps/\alpha$ and~$n_2$ in 
place of~$d, \eps$ and~$n$ respectively. Since $|G_2| = |G| - |T_1| + 1 = n - n_1 +1 = n_2 = |T_2|$ we may therefore apply 
Lemma~\ref{l:2nd-half} to find a spanning copy of~$T_2$ in~$G_2$ in which~$r$ is 
embedded to~$v$, and then the 
embeddings of~$T_1$ and~$T_2$ together form a spanning copy of~$T$ in~$G$.
\end{proof}

\section{Proofs of main theorems}\label{s:main-thms}

In this section we give the proofs of Theorem~\ref{t:good-unavoidable} 
(that every large nice oriented tree of polylogarithmic maximum degree is unavoidable) and 
Theorem~\ref{t:most-are-nice} (that a random labelled oriented tree is nice asymptotically almost surely).

\subsection{A class of unavoidable oriented trees}

We begin by combining the results of the previous two sections to prove Theorem~\ref{t:good-unavoidable}. 
The main task is to use Lemma~\ref{l:struct} to show that we can find either 
an almost-directed pair in~$G$ which partitions~$V(G)$ or an almost-spanning \CCT\ in~$G$. 
In the former case we then embed~$T$ in~$G$ using Lemma~\ref{l:embed-nice-in-cut}, 
whilst in the latter case we embed~$T$ in~$G$ using~Lemma~\ref{l:in-CCT}.

\begin{proof}[Proof of Theorem~\ref{t:good-unavoidable}]
Introduce new constants~$k_0, k_1, \eps,d, \mu, \eta, \omega$ and~$\gamma$ such that
\[
\frac{1}{n} \ll \frac{1}{k_1} \ll \frac{1}{k_0} \ll \eps\ll d\ll\mu\ll\eta\ll\omega\ll\gamma\ll \alpha.
\]
We may also assume that~$1/n \ll 1/C$.
Let~$G$ be a tournament on~$n$ vertices,
and let~$T$ be an $\alpha$-nice tree on $n$ vertices such that~$\Delta(T)\leq (\log n)^C$.
We choose vertex-disjoint subsets~$X, Y, Z \subseteq V(G)$ such that
\begin{enumerate}[label=(\alph*)]
\item \label{i:1.4a} $X \cup Y \cup Z = V(G)$,
\item \label{i:1.4b} $|Y| \geq n/3$, and
\item \label{i:1.4c} $e\bigl(G(Y \rarr X)\bigr) + e\bigl(G(Z \rarr X)\bigr) + e\bigl(G(Z \rarr Y)\bigr) 
\leq \min\bigl(\eta \bigl(|X|+|Z|\bigr)n, 3\gamma \eta n^2\bigr)$.
\end{enumerate}
Moreover, we make this choice so that~$|Y|$ is minimal among all choices of~$X$,~$Y$ and~$Z$ which 
satisfy~\ref{i:1.4a}--\ref{i:1.4c} above 
(taking~$Y = V(G)$ and~$X = Z = \emptyset$ shows that such subsets do exist).

Suppose first that~$|Y| \leq (1- 2\gamma) n$. Then we have either~$|X| \geq \gamma n$ 
or~$|Z| \geq \gamma n$. If~$|X| \geq \gamma n$ then, taking~$A \deq X$ and~$B \deq Y \cup Z$, 
we have a partition~$\{A, B\}$ of~$V(G)$ into sets~$|A|, |B| \geq \gamma n$ such that the number of 
edges directed from~$B$ to~$A$ is 
$e\bigl(G(Y \rarr X)\bigr) + e\bigl(G(Z \rarr X)\bigr) \leq 3\gamma \eta n^2 \leq \omega |A||B|$ by~\ref{i:1.4c}, so~$(A, B)$ is 
an~$\omega$-almost-directed pair in~$G$. If instead~$|Z| \geq \gamma n$ then a similar argument shows 
that taking~$A \deq X \cup Y$ and~$B = Z$ gives a partition~$\{A, B\}$ of~$V(G)$ into 
sets~$|A|, |B| \geq \gamma n$ such that~$(A, B)$ is an~$\omega$-almost-directed pair in~$G$. Either way, 
we may then apply Lemma~\ref{l:embed-nice-in-cut} (with~$\omega$ and~$\gamma$ in place of~$\mu$ 
and~$\nu$ respectively) to~find a copy of~$T$ in~$G$.

Now suppose instead that~$|Y| > (1 - 2\gamma) n$, and write~$G' \deq G[Y]$. 
Observe in particular that we then have $|X| + |Z| = n - |Y| < 2 \gamma n$. 
If there exists a vertex~$y \in Y$ with~$\deg^-_{G'}(y) < \eta n$, then moving~$y$ from~$Y$ to~$X$ 
would increase~$e\bigl(G(Y \rarr X)\bigr)$ by less than~$\eta n$ whilst increasing~$|X|$ by one 
and leaving $e\bigl(G(Z \rarr X)\bigr) + e\bigl(G(Z \rarr Y)\bigr)$ and~$|Z|$ unchanged. 
The resulting sets would then satisfy~\ref{i:1.4a},~\ref{i:1.4b} and~\ref{i:1.4c} with 
a smaller value of~$|Y|$, contradicting the minimality of~$|Y|$ in our choice of~$X$,~$Y$ and~$Z$. 
So every vertex~$y \in Y$ must have~$\deg^-_{G'}(y) \geq \eta n$. Likewise, if there exists a 
vertex~$y \in Y$ with~$\deg^+_{G'}(y) < \eta n$, then we obtain a similar contradiction by moving~$y$ 
from~$Y$ to~$Z$. We conclude that every vertex~$y \in Y$ must have~$\deg^+_{G'}(y) \geq \eta n$, 
so~$\delta^0(G') \geq \eta n \geq \eta |Y|$. Now suppose that there exists a partition~$\{S, S'\}$ of~$Y$ 
such that~$(S, S')$ is a~$\mu$-almost-directed pair in~$G'$.
Observe that moving all vertices of~$S$ from~$Y$ to~$X$ would increase~$e(Y \rarr X)$ 
by at most~$e(S' \to S) \leq \mu |S||S'| \leq \gamma \eta |S| n$ whilst increasing~$|X|$ by~$|S|$ and 
leaving~$e\bigl(G(Z \rarr X)\bigr) + e\bigl(G(Z \rarr Y)\bigr)$ and~$|Z|$ unchanged. So if~$|S| \leq n/2$, then at least~$|Y| - n/2 \geq n/3$ 
vertices would remain in~$Y$, and so the resulting sets would satisfy~\ref{i:1.4a},~\ref{i:1.4b} 
and~\ref{i:1.4c} with a 
smaller value of~$|Y|$, again contradicting the minimality of~$|Y|$.  On the other hand, if~$|S| > n/2$ 
then~$|S'| \leq n/2$, and we obtain a similar contradiction by moving all vertices of~$S'$ from~$Y$ to~$Z$. 
We conclude that no such partition~$\{S, S'\}$ of~$Y$ exists. Therefore by Theorem~\ref{l:struct} there is 
an integer~$k$ with~$k_0 \leq k \leq k_1$ such that~$G'$ contains a~$(d, \eps)$-regular \CCT\ with 
clusters~$V_1, \dots, V_k$ of equal size such that~$|\bigcup_{\ink} V_i| > (1-\eps) |Y| \geq (1-3\gamma)n$. 
We may therefore apply Lemma~\ref{l:in-CCT} (with~$3\gamma$ in place of~$\psi$) to obtain 
a copy of~$T$ in~$G$.
\end{proof}

\subsection{Most oriented trees are nice}

We now turn to the proof of Theorem~\ref{t:most-are-nice}, for which we use the following 
well-known result, known as \Cayley's theorem.

\begin{theorem}[Borchardt 1860; \Cayley\ 1889]\label{t:cayley}
  There are~$n^{n-2}$ labelled undirected trees with vertex set~$[n]$.
\end{theorem}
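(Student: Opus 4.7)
The plan is to construct an explicit bijection between the set of labelled undirected trees on vertex set $[n]$ and the Cartesian product $[n]^{n-2}$; since the latter plainly has cardinality $n^{n-2}$, the result follows. The cleanest approach is the classical Pr\"ufer encoding.

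I would define the Pr\"ufer map $\pi$ as follows. Given a labelled tree $T$ on $[n]$ with $n \geq 2$, set $T_0 \deq T$ and iteratively, for $i = 1, \ldots, n-2$, identify the leaf $\ell_i$ of $T_{i-1}$ with the smallest label, record $a_i$ as its unique neighbour in $T_{i-1}$, and set $T_i \deq T_{i-1} - \ell_i$. Since every tree on at least two vertices has a leaf, the procedure is well-defined and produces a sequence $\pi(T) \deq (a_1, \ldots, a_{n-2}) \in [n]^{n-2}$, with $T_{n-2}$ consisting of a single edge between the two surviving labels.

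The main task is to show that $\pi$ is a bijection by exhibiting an explicit inverse. The key observation is that for each $v \in [n]$ the multiplicity of $v$ in $\pi(T)$ equals $\deg_T(v) - 1$; in particular, $v$ is absent from $\pi(T)$ exactly when $v$ is a leaf of $T$ or one of the two vertices remaining after the procedure terminates. From a sequence $(a_1, \ldots, a_{n-2}) \in [n]^{n-2}$ I would reconstruct a tree by repeatedly adding an edge between $a_i$ and the smallest label that is neither present in the suffix $(a_i, \ldots, a_{n-2})$ nor already designated a removed leaf, and finally joining the two labels that remain unused. Verifying that these maps are mutually inverse is a routine induction on $n$: removing the smallest leaf from $T$ precisely matches stripping the first entry of $\pi(T)$ and then repeating the inverse procedure on the remainder. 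The main thing to be careful about is the bookkeeping of which labels are currently ``available'' to serve as a leaf at each reconstruction step, but this is essentially mechanical and does not involve any serious obstacle.
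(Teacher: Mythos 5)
The paper does not prove Theorem~\ref{t:cayley}; it is cited as a classical result of Borchardt and Cayley and used as a black box in the proof of Lemma~\ref{l:aas-many-cherries}. Your Pr\"ufer-sequence bijection is a correct and entirely standard proof of the statement: the encoding/decoding maps are well defined, the degree observation (that $v$ appears $\deg_T(v)-1$ times in the sequence) is the right invariant to drive the inverse construction, and the inductive verification that the two maps compose to the identity goes through without issue (modulo the trivial cases $n\le 2$, which should be noted but are immediate). So there is nothing in the paper to compare against; your argument stands on its own as a complete proof of the cited theorem.
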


A \defi{cherry} is a path of length two, and 
and its \defi{centre} is the vertex of degree two. 
In an oriented tree~$T$ we refer to an in-subtree (respectively out-subtree) 
which is an (oriented) cherry as an~\defi{in-cherry} (respectively~\defi{out-cherry}).
Our next lemma states that most labelled undirected trees have many pendant cherries. 
This is a special case of a much more general result for simply generated trees due to 
\Janson~\cite{janson2012simply}. For completeness, we include a proof of the particular
statement that suffices for our purposes.

\begin{lemma}\label{l:aas-many-cherries}
  Fix~$\eps > 0$, and let~$T$ be a~tree chosen uniformly at random from the set of all
  labelled undirected trees with vertex set~$[n]$.
  Then asymptotically almost surely~$T$ contains~$(1 \pm \eps) \frac{\ee^{-3}}{2}n$ pendant cherries.
\end{lemma}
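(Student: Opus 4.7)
The plan is to apply the second moment method to $X \deq$ the number of pendant cherries in $T$. First I observe the structural description: for $n$ large, a pendant cherry in $T$ is precisely a triple $\{u,v,w\} \subseteq V(T)$ such that $u, w$ are leaves of~$T$ with common neighbour~$v$, and $v$ has degree exactly~$3$ in~$T$. Indeed, for such a triple to form a pendant subtree, the fact that $T-T'$ is a tree on $n-3$ vertices forces (via an edge count) exactly one edge to leave $\{u,v,w\}$; combined with $u,w$ being leaves of~$T$ this forces $d_v=3$. A key consequence is that distinct pendant cherries are vertex-disjoint: each leaf has a unique neighbour and so lies in at most one pendant cherry, while each non-leaf vertex is the center of at most one pendant cherry, so each vertex of~$T$ lies in at most one pendant cherry overall.

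For the first moment I use Cayley's formula (Theorem~\ref{t:cayley}) together with a ``delete the two leaves'' bijection. For distinct $u,v,w\in[n]$, the labelled trees on $[n]$ in which $\{u,v,w\}$ is a pendant cherry with center~$v$ correspond (by deleting $u, w$ and, conversely, attaching them as new leaves at~$v$) to the labelled trees on $[n]\setminus\{u,w\}$ in which $v$~is a leaf; a standard Pr\"ufer-sequence count gives $(m-1)^{m-2}$ such trees on a vertex set of size~$m$, hence $(n-3)^{n-4}$ here. Summing over the $n\binom{n-1}{2}$ choices of $(v, \{u,w\})$ and normalising by $n^{n-2}$,
\[
\bbe[X] = \frac{n(n-1)(n-2)}{2}\cdot\frac{(n-3)^{n-4}}{n^{n-2}} = \frac{(n-1)(n-2)}{2n}\left(1 - \frac{3}{n}\right)^{n-4} \sim \frac{\ee^{-3}}{2}\,n.
\]

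The vertex-disjointness observation is what makes the second moment clean: $X(X-1)$ counts ordered pairs of \emph{disjoint} pendant cherries, so $\bbe[X(X-1)]$ reduces to a sum over ordered pairs of disjoint triples only. An analogous ``delete the four leaves'' bijection identifies the labelled trees realising any specified disjoint pair of pendant cherries with the trees on a vertex set of size $n-4$ in which two prescribed vertices are both leaves, giving the count $(n-6)^{n-6}$. Hence
\[
\bbe[X(X-1)] = n(n-1)\binom{n-2}{2}\binom{n-4}{2}\cdot\frac{(n-6)^{n-6}}{n^{n-2}} \sim \frac{\ee^{-6}}{4}\,n^2 = \bbe[X]^2\bigl(1 + \littleo(1)\bigr).
\]
Therefore $\Var(X) = \bbe[X(X-1)] + \bbe[X] - \bbe[X]^2 = \littleo(n^2)$, and Chebyshev's inequality yields $X = (1\pm\eps)\frac{\ee^{-3}}{2}n$ asymptotically almost surely for every fixed $\eps > 0$. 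The only substantive work lies in the bijective Cayley/Pr\"ufer counts for the two factorial moments; the vertex-disjointness of pendant cherries is what eliminates the delicate overlap case analysis that would otherwise dominate the second moment calculation.
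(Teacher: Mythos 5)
Your proposal is correct and follows essentially the same route as the paper: a second-moment (Chebyshev) argument with Cayley-type enumeration, using the fact that intersecting pendant cherries cannot coexist (so only disjoint pairs contribute to the second moment). The only differences are cosmetic: you count via a leaf-deletion/Pr\"ufer bijection and the factorial moment $\bbe[X(X-1)]$, whereas the paper counts trees on the complement directly and bounds $\bbe[X^2]$ -- the resulting formulas are identical.
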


\begin{proof}
For each set~$S \in \binom{[n]}{3}$, let~\vcher{S} be the indicator random variable
which has value~1 if~$S$ spans a~pendant cherry in~$T$ and~0 otherwise.
We first note that
\begin{equation}\label{e:Pabc}
\bbp(\,\vcher{S}=1\,)=\frac{3(n-3)(n-3)^{n-5}}{n^{n-2}}
= \frac{3}{n^2}\left(1-\frac{3}{n}\right)^{n -4}. \nonumber
\end{equation}
Indeed, there are three possible choices for the centre of the cherry, 
this centre is adjacent to one of the~$n-3$ vertices in~$[n] \setminus S$,
and by Theorem~\ref{t:cayley} there are~$(n-3)^{n-5}$ distinct possibilities for the undirected labelled 
tree spanned by~$[n] \sm S$, giving the numerator, whilst the denominator is simply the total number of 
labelled undirected trees on~$n$ vertices (again by Theorem~\ref{t:cayley}).
The number of pendant cherries in~$T$ is~$X \deq \sum_{S \in \binom{[n]}{3}} \vcher{S}$, so
by linearity of expectation it follows that
\begin{equation}\label{e:EX}
\bbe (X) 
 = \sum_{S \in \binom{[n]}{3}} \bbp(\vcher{S}=1)
 = \binom{n}{3}\frac{3}{n^2}\left(1 - \frac{3}{n}\right)^{n-4} = \bigl(1+\littleo(1)\bigr) \frac{\ee^{-3}}{2}n.
\end{equation}
It therefore suffices to show that~$X$ is concentrated around~$\bbe(X)$.
Consider any distinct~$S, S' \in \binom{[n]}{3}$, and note that if~$S$ intersects~$S'$ then we must have
$\vcher{S}\cdot\vcher{S'} = 0$. On the other hand, if~$S$ and~$S'$ are 
disjoint then by a similar argument as above we have
\begin{equation}\label{e:abcdef}
\bbe\bigl(\vcher{S}\cdot\vcher{S'}\bigr)
  = \bbp (\,\vcher{S}=\vcher{S'} = 1\,) 
   = \frac{\bigl[3(n-6)\bigr]^2(n-6)^{n-8}}{n^{n-2}}
   = \frac{9}{n^4}\left(1 -\frac{6}{n}\right)^{n-6},\nonumber
\end{equation}
so
\begin{align}
\bbe(X^2)\nonumber
 & = \bbe\Bigl( \sum_{S \in \binom{[n]}{3}}\vcher{S}^2 + 
\sum_{\substack{S, S' \in \binom{[n]}{3}\\S\neq S'}}\vcher{S}\cdot\vcher{S'}\Bigr) 
 = \sum_{S \in \binom{[n]}{3}}\bbe(\vcher{S})   + 
\sum_{\substack{S, S' \in \binom{[n]}{3}\\S \cap S' = \emptyset}}
\bbe\bigl(\,\vcher{S}\cdot\vcher{S'}\,\bigr)\\
& = \binom{n}{3}\frac{3}{n^2}\left(1-\frac{3}{n}\right)^{n-4} 
                                    + \binom{n}{3}\binom{n-3}{3}\frac{9}{n^4}\left(1-\frac{6}{n}\right)^{n-6}.
\label{e:EX2}
\end{align}
Combining
\eqref{e:EX} 
and~\eqref{e:EX2} we find that
\begin{align} 
\Var(X) \nonumber
 & = \binom{n}{3}\frac{3}{n^2}\left(1-\frac{3}{n}\right)^{n-4} 
                                    + \binom{n}{3}\binom{n-3}{3}\frac{9}{n^4}\left(1-\frac{6}{n}\right)^{n-6}
     - \left[ \binom{n}{3}\frac{3}{n^2}\left(1-\frac{3}{n}\right)^{n-4} \right]^2 \\
& = \bigl(1+\littleo(1)\bigr)\frac{\ee^{-3}}{2}n + \bigl(1+\littleo(1)\bigr)\frac{\ee^{-6}}{4}n^2 - 
\left(\bigl(1+\littleo(1)\bigr)\frac{\ee^{-3}}{2}n\right)^2 = \littleo(n^2).\label{e:varX}
\end{align}
By \Chebyshev's inequality,~\eqref{e:EX} and~\eqref{e:varX} it follows that
\[
\bbp \left(\, \bigl|X - \bbe(X)\bigr| > \frac{\eps}{2} \cdot \bbe(X)\right) 
\leq \frac{\Var(X)}{\bigl(\eps \bbe(X)/2\bigr)^2}  = \littleo(1),
\]
which together with~\eqref{e:EX} proves the lemma.
\end{proof}

We are now ready to prove Theorem~\ref{t:most-are-nice}, that almost all labelled 
oriented trees are~$\frac{1}{250}$-nice.

\begin{proof}[Proof of Theorem~\ref{t:most-are-nice}]
  Let~$\calt_n$ be the set of all labelled oriented trees with vertex set~$[n]$.
  Note that we can select an oriented tree~$T$ uniformly at random from~$\calt_n$
  using the following two-step random procedure:
  first select a~tree~$T_0$ uniformly at~random from the set of
  all labelled undirected trees with vertex set~$[n]$,
  then form a labelled oriented tree~$T$ by orienting each edge~$e$~of~$T_0$
  uniformly at~random
  and independently of all other choices.
  Indeed, since there are~$n^{n-2}$ possibilities for~$T_0$ by Theorem~\ref{t:cayley}, and 
  every tree in~$\calt_n$ has~$n-1$ edges,
  the probability that a given labelled oriented tree~$T$ is selected by this 
  two-step procedure is~$n^{2-n}2^{1-n}$.

Let~$C$ be the number of pendant cherries of~$T_0$, 
let~$X$ be the number of pendant in-cherries of~$T$ which contain an out-leaf of~$T$,
and let~$Y$ be the number of pendant out-cherries of~$T$ which contain both an in-leaf and out-leaf of~$T$.
Observe that the probability that a fixed pendant cherry of~$T_0$ contributes to~$X$ is~$3/8$, and 
likewise the probability that a fixed pendant cherry of~$T_0$ contributes to~$Y$ is~$1/4$. 
So~$X \sim \calb(C, 3/8)$ and~$Y \sim \calb(C, 1/4)$. Since by Lemma~\ref{l:aas-many-cherries} we 
have~$C \geq n/50$ asymptotically almost surely (where we use the fact that~$\ee^{-3}/2 > 1/50$), 
it follows by Theorem~\ref{t:exp} that we also 
have~$|X|, |Y| \geq C/5 \geq n/250$ asymptotically almost surely. Since no pendant cherry of~$T$ 
can be counted by both~$X$ and~$Y$, it follows that~$T$ is~$\frac{1}{250}$-nice.
\end{proof}

\section{Concluding remarks} \label{s:conclusion}

Recall that Theorem~\ref{t:good-unavoidable} states that all large nice oriented trees of polylogarithmic 
maximum degree are unavoidable. Together with Moon's theorem on the maximum degree of a random labelled 
tree (Theorem~\ref{t:aas-max-degree}) and our proof that almost all labelled oriented trees are 
nice (Theorem~\ref{t:most-are-nice}) this established Theorem~\ref{t:most-unavoidable}, 
that almost all labelled oriented trees are unavoidable.

The same method can be used to show that other classes of random oriented trees
are asymptotically almost surely unavoidable. More precisely, let~$\calt$ be a 
class of undirected trees, let~$\calt_n$ consist of all members of~$\calt$ with~$n$ vertices, 
and let~$T$ be a tree selected uniformly at random from~$\calt_n$. If we can show, for 
some constants~$C$ and~$\xi$, that
\begin{enumerate}[label=(\alph*)]
\item \label{i:treepropa} $\Delta(T) \leq (\log n)^C$ asymptotically almost surely, and
\item \label{i:treepropb} $T$ has at least~$\xi n$ pendant stars asymptotically almost surely,
\end{enumerate}
then by a similar argument to the proof of Theorem~\ref{t:most-are-nice} it follows that a 
uniformly-random orientation~$T^*$ of~$T$ is asymptotically almost surely~$\alpha$-nice 
(where~$\alpha \ll \xi$), and therefore by Theorem~\ref{t:good-unavoidable} that~$T^*$ is 
asymptotically almost surely unavoidable. Following the methods of \Janson~\cite{janson2012simply} 
it is not hard to show that~\ref{i:treepropa} and~\ref{i:treepropb} hold for many classes~$\calt$ of simply-generated random trees, 
such as uniformly-random ordered trees (see~\cite[Example 10.1]{janson2012simply}), 
binary trees (see~\cite[Example 10.3]{janson2012simply}) and~$d$-ary trees 
for a fixed integer~$d \geq 3$ (see~\cite[Example~10.6]{janson2012simply})
In the same way Theorem~\ref{t:good-unavoidable} directly shows that for many fixed trees~$T$, 
such as not-too-unbalanced~$d$-ary trees for a fixed integer~$d \geq 3$, 
a random orientation of~$T$ is unavoidable with high probability. 
Finally we note that for many oriented trees it is straightforward to check directly that 
the conditions of Theorem~\ref{t:good-unavoidable} are satisfied, for instance in the case 
of balanced antidirected binary trees, in which every non-leaf vertex has one child as an 
inneighbour and one child as an outneighbour.

However, there do exist oriented trees which are not nice but which are unavoidable, such as the paths and 
claws discussed in Section~\ref{s:intro}. In this context it is natural to ask whether the property of being 
unavoidable can be succinctly characterised or easily tested.

\begin{question}~
\begin{enumerate}[label=(\roman*)]
\item Is there a concise characterisation of unavoidable oriented trees? 
\item Given an oriented tree~$T$, can we determine in polynomial time if~$T$ is unavoidable?
\end{enumerate}
\end{question}

We suspect that it would be very difficult to establish such a characterisation. As a more 
attainable goal, it would be interesting to establish further classes of unavoidable oriented trees. 
For example, say that an oriented tree~$T$ with root~$r$ is \defi{outbranching} if for every 
vertex~$v \in V(T)$ the path in~$T$ from~$r$ to~$v$ is directed from~$r$ to~$v$. 
In particular, if the root of~$T$ is not a leaf then~$T$ then has no in-leaves at all, 
so~$T$ is not~$\alpha$-nice for any~$\alpha > 0$. 

\begin{problem}
What conditions are sufficient to ensure that an outbranching oriented tree~$T$ is unavoidable?
\end{problem}

To shed some light on this problem it may help to consider the outbranching balanced 
binary trees~$B_d$ on~$2^{d+1}-1$ vertices, in which every non-leaf vertex has two children 
as outneighbours and every leaf is at distance precisely~$d$ from the root. 

\begin{conjecture} \label{conj}
$B_d$ is unavoidable for~$d$ sufficiently large (possibly~$d > 1$ is sufficient).
\end{conjecture}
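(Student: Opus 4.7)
The plan is to prove Conjecture~\ref{conj} by adapting the framework used for Theorem~\ref{t:good-unavoidable}. First apply Lemma~\ref{l:struct} to the tournament $G$ on $n = 2^{d+1}-1$ vertices, and follow the case-analysis strategy from the proof of Theorem~\ref{t:good-unavoidable} (minimising $|Y|$) to reduce to the case where either $V(G)$ admits a partition $\{U,W\}$ with $|U|,|W| \geq \nu n$ such that $(U,W)$ is a $\mu$-almost-directed pair, or $G$ contains an almost-spanning $(d',\eps)$-regular cycle of cluster tournaments. Since $\Delta(B_d) = 3$ the polylogarithmic-degree hypothesis is trivially satisfied; what must be overcome is that $B_d$ is outbranching and has no in-leaves, so it is not $\alpha$-nice for any $\alpha > 0$. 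The plan is therefore to replace, in the proofs of Lemmas~\ref{l:embed-nice-in-cut} and~\ref{l:in-CCT}, the role played by in-leaves with an alternative use of the $2^{d-1}$ pendant out-stars of $B_d$ (one centred at each level-$(d-1)$ vertex, each with two out-leaves).

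In the cycle-of-cluster-tournaments case, I would adapt Lemma~\ref{l:in-CCT} as follows. The random allocation of Algorithm~\ref{a:alloc} and the estimates of Lemma~\ref{l:3.3-stren} do not use $\alpha$-niceness, so they carry over directly. To cover an atypical vertex $b \in V_i$, where previously an in-leaf of some $x$ embedded in $V_i$ could be used when $b \to \phi(x)$, I would instead always use an out-leaf: find a level-$(d-1)$ vertex $x$ allocated to $V_i$ with $\phi(x) \to b$. Such an $x$ exists by a counting argument, since $b$ has typical in-degree within $V_i$ (enlarging the atypical set by the at most $4\eps m$ vertices of small within-cluster semidegree via Lemma~\ref{l:semidegree}) and there are $\Theta(n/k)$ candidate level-$(d-1)$ vertices allocated to $V_i$ by Lemma~\ref{l:3.3-stren}\ref{l:3.3-stren-iv}. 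The completion step (the analogue of Lemma~\ref{l:2nd-half}) also needs adapting: both out-leaves of each $P_i$-vertex land in $U_\ipp$, so the balancing condition becomes $|U_\ipp| = 2|P_i|$ and the final matching step is replaced by finding a bipartite $(2,1)$-factor in $G[P_i \rarr U_\ipp]$, which exists in super-regular balanced bipartite graphs by doubling each $P_i$-vertex and applying Lemma~\ref{l:matching}.

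The almost-directed-pair case is the hard part. Here I would exploit the outbranching structure of $B_d$: a partition $\{A,B\}$ of $V(B_d)$ is a directed pair in $B_d$ (all edges going $A \to B$) exactly when $A$ is an upper set, i.e.\ closed under taking ancestors, and taking $\calt^- = \calt^+ = \emptyset$ in Lemma~\ref{l:nice-split} shows such partitions exist for every size. The plan would then be: (i) identify a small set $Z$ of atypical vertices (incident to many reverse edges); (ii) choose an upper set $A$ of $V(B_d)$ of size roughly $|U| - |Z \cap U|$; (iii) remove $\Theta(\beta n)$ out-leaves from $B_d$ to obtain a subtree $B_d'$, with the removed leaves suitably distributed between $A$ and $B$; (iv) apply a variant of Lemma~\ref{l:approx-embed-in-cut} to embed $B_d'$ in $G$ with $A$-vertices in $U$ and $B$-vertices in $W$; (v) reinsert the removed leaves to cover the remaining vertices of $G$, using out-leaves whose parents lie in $A$ (hence in $U$) to cover uncovered vertices of either side, and out-leaves whose parents lie in $B$ to cover uncovered vertices of $W$ via intra-$W$ tournament edges.

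The main obstacle is step (v) for uncovered vertices on the $U$-side: because edges from $W$ to $U$ are rare in a $\mu$-almost-directed pair, every such vertex must be covered by an out-leaf whose parent lies in $A$. This forces the design of $A$ to contain enough level-$(d-1)$ vertices of $B_d$ to supply the required pool of out-leaves. The key combinatorial lemma I would need is that for any target size and any prescribed not-too-large subset $S$ of level-$(d-1)$ vertices, there is an upper set of $B_d$ of the target size whose level-$(d-1)$ intersection is exactly $S$; this follows from the tree structure of $B_d$ by an easy induction. Combined with a careful count of the out-leaves available on each side, this should close the almost-directed-pair case, and case~(a) of Lemma~\ref{l:struct} is then handled exactly as in the proof of Theorem~\ref{t:good-unavoidable}.
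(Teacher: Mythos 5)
You are attempting to prove Conjecture~\ref{conj}, which the paper leaves open: the authors state explicitly that further new ideas seem necessary precisely because their method relies on the tree having many in-leaves as well as many out-leaves, and $B_d$ has no in-leaves. Your proposal essentially re-runs the paper's machinery with out-leaves only, and the gap sits exactly where the in-leaves were doing irreplaceable work: covering exceptional vertices. In the cycle-of-cluster-tournaments case you assert that an atypical vertex $b$ can always be covered by an out-leaf because ``$b$ has typical in-degree within $V_i$''. This is unjustified: $b$ lies outside every cluster, and nothing prevents all edges between $b$ and $V_i$ (indeed, all edges at $b$) from being directed out of $b$, in which case $b$ has no in-neighbour among the embedded star centres and cannot be covered by an out-leaf at all. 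Lemma~\ref{l:semidegree} only bounds the number of low-semidegree vertices in a tournament; it does not remove them, and your remedy of enlarging the atypical set is circular, since atypical vertices are exactly the ones you must cover by out-leaves, which again requires them to have in-neighbours in the reserved pools. The paper's two-leaf device needs no degree hypothesis on $b$ whatsoever: whichever way the tournament edge between $b$ and $\phi(x)$ points, one of the two leaf types works. With out-leaves only, vertices of very small in-degree can only be handled by deliberately placing the root or near-root vertices of $B_d$ on them at the start of the embedding, and no such mechanism appears in your plan. The same obstruction recurs in the almost-directed-pair case, for instance for vertices of $W$ incident to many reverse edges (which may have almost no in-neighbours anywhere in $G$) and for the leftover vertices of $U$.

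In addition, the ``key combinatorial lemma'' you invoke in the almost-directed-pair case is false as stated: an ancestor-closed subset of $V(B_d)$ whose intersection with level $d-1$ is exactly a prescribed set $S$ contains at most $2^{d-1}-1$ vertices from levels $0,\dots,d-2$, the $|S|$ vertices of $S$, and at most $2|S|$ leaves, hence has size at most $2^{d-1}-1+3|S|$, roughly $n/4+3|S|$; so for $|S|=O(\beta n)$ no such set of size close to $|U|$ exists once $|U|$ is substantially larger than $n/4$. Weakening ``exactly'' to ``contains'' may rescue the existence statement, but then the count of available out-leaf parents on each side, which is the purpose of the lemma, is no longer controlled. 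The parts of your plan that transfer routinely (the allocation estimates of Lemma~\ref{l:3.3-stren}, the $(2,1)$-factor variant of the matching step) are plausible, but they are not where the difficulty lies; the proposal does not supply the genuinely new idea needed at the covering step, and so it does not constitute a proof of Conjecture~\ref{conj}.
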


It seems that further new ideas and techniques would be necessary to prove Conjecture~\ref{conj}, since the 
existence of both many in-leaves and many out-leaves of~$T$ is crucial to the approach we use in this paper.

Finally, recall that in Section~\ref{s:intro} we defined~$g(T)$ for an oriented tree~$T$ to be the 
smallest integer such that every tournament on~$g(T)$ vertices contains a copy of~$T$. 
So~$T$ is unavoidable if and only if~$g(T) = |T|$. As noted earlier, if~$T$ is an out-directed star 
on~$n$ vertices then~$g(T) \geq 2n-2$, and \Kuehn, \Mycroft\ and \Osthus's proof of Sumner's conjecture 
for large trees shows that this is the maximum possible value of~$g(T)$ for large~$n$.
That is, every oriented tree~$T$ on~$n$ vertices, where $n$ is large, has~$g(T) \leq 2n-2$. 
The following `double-star' construction due to Allen and Cooley (see~\cite{KMO11:sumner_approximate}) 
also yields an oriented tree~$T$ for which~$g(T)$ is significantly larger than~$|T|$. 
Fix~$a, b, c \in \bbn$ with~$a+b+c = n$, and let~$T$ be the oriented tree on~$n$ vertices formed from 
a directed path~$P$ on~$b$ vertices by adding~$a$ new vertices as inneighbours of the initial vertex 
of~$P$ and adding~$c$ new vertices as outneighbours of the terminal vertex of~$P$.
Now take disjoint sets of vertices~$A, B$ and~$C$ of sizes~$2a-1, b-1$ and~$2c-1$ respectively, and 
let~$G$ be the tournament in which~$G[A]$ and~$G[C]$ are regular tournaments,~$G[B]$ is an arbitrary 
tournament, and all remaining edges of~$G$ are directed from~$A$ to~$B$, from~$B$ to~$C$ or from~$A$ to~$C$. 
So~$G$ has~$2a+ b+ 2c - 3 = 2n - b - 3$ vertices, but~$G$ does not contain a copy of~$T$, since then 
(as~$|B| < b$) either the initial vertex of~$P$ would be in~$A$, which cannot occur since each vertex 
of~$A$ has only~$a-1$ inneighbours, or the terminal vertex of~$P$ would be in~$C$, which cannot occur 
since each vertex of~$C$ has only~$c-1$ outneighbours. So~$g(T) \geq 2n-b-2$ 
(and it is not too hard to check that in fact~$g(T) = 2n-b-2$).

For any~$\Delta, n \in \bbn$, taking~$a = c = \Delta-1$ and~$b = n - 2\Delta +2$ in the 
above construction yields an oriented tree~$T$ on~$n$ vertices with~$\Delta(T) = \Delta$ 
and~$g(T) = n+2\Delta-4$. In other words, for any~$n\in\bbn$ and any~$\Delta \geq 3$ there exist oriented trees
on $n$ vertices
with maximum degree at most~$\Delta$ which are not unavoidable. On the other hand, 
Theorem~\ref{t:log-bounded-deg} shows that every oriented tree whose maximum degree is at most 
polylogarithmic in~$n$ is contained in every tournament on~$n+\littleo(n)$ vertices. 
\Kuehn, \Mycroft\ and \Osthus~\cite{KMO11:sumner_approximate} asked whether this~$\littleo(n)$ term can 
be replaced by a constant for oriented trees whose maximum degree is at most a constant~$\Delta$, and 
the previous construction shows that a constant of~$2\Delta - 4$ would be best possible. 
More generally it would be interesting to know whether the previous construction is extremal for any bound on~$\Delta(T)$ (as a function of~$n$).

\begin{question}
Is every oriented tree~$T$ on~$n$ vertices contained in every tournament on~$n+2\Delta(T)-4$ vertices?
If not, for which functions $f(n)$ is it true that every oriented tree~$T$ on~$n$ vertices with $\Delta(T) \leq f(n)$ is contained in every tournament on~$n+2f(n)-4$ vertices? 
\end{question}

\addcontentsline{toc}{section}{Bibliography}
\bibliographystyle{simple-url}
\bibliography{minbib}
\edef\MNlastpage{\arabic{page}}
\end{document}